\font\tencmmib=cmmib10 \skewchar\tencmmib '60
\def\lessim{\ \lower4pt\hbox{$
\buildrel{\displaystyle <}\over\sim$}\ }
\def\gessim{\ \lower4pt\hbox{$\buildrel{\displaystyle >}
\over\sim$}\ }
\newcommand{\e}{\mathbb{E}}
\newcommand{\p}{\mathbb{P}}
\newcommand{\vsi}{{\vec{\sigma}}}
\newcommand{\vtau}{\vec{\tau}}
\newtheorem{lemma}{\bf Lemma}
\newtheorem{definition}{\bf Definition}
\newtheorem{theorem}{\bf Theorem}
\newtheorem{example}{\bf Example}
\newtheorem{proposition}{\bf Proposition}
\newenvironment{Proof of lemma}{\noindent{\bf Proof of Lemma}}{\hfill{\tiny${\square}$}\newline}
\newenvironment{Proof of theorem}{\noindent{\bf Proof of Theorem}}{\hfill{\tiny${\square}$}\newline}
\newenvironment{Proof of theorems}{\noindent{\bf Proof of Theorems}}{\hfill{\tiny${\box}$}\newline}
\newenvironment{Proof of proposition}{\noindent{\bf Proof of Proposition}}{\hfill{\tiny${\square}$}\newline}
\newenvironment{Proof of propositions}{\noindent{\bf Proof of Propositions}}{\hfill{\tiny${\square}$}\newline}
\newenvironment{Proof of exercise}{\noindent{\it Proof of Exercise:}}{\hfill{\footnotesize${\square}$}}
\newenvironment{Acknowledgements}{\noindent{\bf Acknowledgements.}}
\numberwithin{equation}{section}
\font\tencmmib=cmmib10 \skewchar\tencmmib '60
\def\lessim{\ \lower4pt\hbox{$
\buildrel{\displaystyle <}\over\sim$}\ }
\def\gessim{\ \lower4pt\hbox{$\buildrel{\displaystyle >}
\over\sim$}\ }
\def\go0{\to 0}
\def\leftitem#1{\item{\hbox to\parindent{\enspace#1\hfill}}}
\def\sg{\sigma}
\def\sg2{\sigma^2}
\def\__{_{\infty}}
\begin{document}

\title{Chaos in the mixed even-spin models}
\author{Wei-Kuo Chen\footnote{Department of Mathematics, University of Chicago, email: wkchen@math.uchicago.edu}}

\maketitle

\begin{abstract}
We consider a disordered system obtained by coupling two mixed even-spin models together. The chaos problem is concerned with the behavior of the coupled system when the external parameters in the two models, such as, temperature, disorder, or external field, are slightly different. It is conjectured that the overlap between two independently sampled spin configurations from, respectively, the Gibbs measures of the two models is essentially concentrated around a constant under the coupled Gibbs measure. Using the extended Guerra replica symmetry breaking bound together with a recent development of controlling the overlap using the Ghirlanda-Guerra identities as well as a new family of identities, we present rigorous results on chaos in temperature. In addition, chaos in disorder and in external field are addressed.
\end{abstract}

{Keywords: spin glass models, chaos}

\section{Introduction and main results}\label{sec:intro}

The chaos problem is a very old one in the spin glass theory. It arose from the discovery that in some models, a small change in the external parameters, such as temperature, disorder, or external field, will result in a dramatic change to the overall energy landscape. Furthermore, it may as well change the location of the ground state and the organization of the pure states of the Gibbs measure. It has received a lot of attention and been intensively studied in the context of various models in physics literature in the past decades (e.g. see \cite{Rizzo09} for a recent review). In recent years, several mathematical results also have  been obtained in the problems of chaos in external field and in disorder: An example of chaos in external field for the spherical Sherrington-Kirkpatrick model was given in \cite{Pan+Tal07}. Chaos in disorder for mixed even-spin models and without external field was considered in \cite{Chatt08}, \cite{Chatt09} and a more general situation in the presence of external field was handled in \cite{Chen11}. 

\smallskip

According to physicists' viewpoint \cite{BM02,RC03,KK07}, chaos in temperature presents intricate difficulties that are very hard to be analyzed both theoretically and experimentally, mainly because this effect is exceedingly small in the perturbation theory. So far mathematically rigorous results are still very scarce. To the best of our knowledge, the only known result is a ``weak'' form of chaos in temperature studied in the mixed $p$-spin models \cite{CP12}. In this paper, we will focus on the mixed even-spin model and investigate its chaos problem in temperature. Using Guerra's replica symmetry breaking bound combining with a recent development on the control of the overlap using the Ghirlanda-Guerra identities and a new family of identities, we will present mathematically rigorous results for chaos in temperature. In addition, more general cases of chaos in disorder are considered and results of chaos in external will be addressed. 

\smallskip

We introduce the mixed even-spin model and the formulation of the chaos problem as follows. Let $\vec{\beta}=(\beta_{p})_{p\geq 1}$ be a nonnegative sequence of real numbers with $\sum_{p\geq 1}2^{2p}\beta_{p}^2<\infty$. To avoid triviality, throughout this paper, we assume that $\beta_{p}\neq 0$ for at least one $p\geq 1.$ Let $h$ be a sub-Gaussian r.v., i.e., $\e \exp th\leq d_1\exp d_2t^2$ for all $t\in\mathbb{R}.$ The most interesting examples are when $h$ is either constant or Gaussian. Given $N\geq 1,$ we consider a family of i.i.d. standard Gaussian r.v. 
\begin{align}\label{intro:eq12}
\mathcal{G}=(g_{i_1,\ldots,i_{2p}}:\forall 1\leq i_1,\ldots,i_{2p}\leq N,\,\forall p\geq 1)
\end{align}
and a family of i.i.d. copies of $h,$ $(h_i)_{i\leq N}.$ These two families of r.v. are independent of each other. The pure $2p$-spin Hamiltonians $X_{N,p}(\vsi)$ for $p\geq 1$ indexed by $\vsi\in\Sigma_N:=\{-1,+1\}^N$ is defined as
\begin{align}
\label{intro:eq17}
X_{N,p}(\vsi)&=\frac{1}{N^{p-1/2}}\sum_{1\leq i_1,\ldots,i_{2p}\leq N}g_{i_1,\ldots,i_{2p}}\sigma_{i_1}\cdots\sigma_{i_{2p}}.
\end{align}
The mixed even-spin Hamiltonian is defined as a linear combination,
\begin{align}\label{intro:eq8}
H_N(\vsi)&=X_N(\vsi)+\sum_{i\leq N}h_i\sigma_i,
\end{align}
where
\begin{align}
\label{intro:eq10}
X_N(\vsi)&:=\sum_{p\geq 1}\beta_{p}X_{N,p}(\vsi).
\end{align}
In physics, the sequence $\vec{\beta}$ is called the (inverse) temperature parameters, the family of r.v. $\mathcal{G}$ is called the disorder of the system, and $h$ is called the external field. The covariance of the Gaussian process $X_N$ can be easily computed as 
\begin{align*}
\e X_N(\vsi^1)X_N(\vsi^2)=N\xi(R(\vsi^1,\vsi^2)),
\end{align*}
where the quantity $R(\vsi^1,\vsi^2):=N^{-1}\sum_{i\leq N}\sigma_i^1\sigma_i^2$ is called the overlap between two spin configurations $\vsi^1,\vsi^2\in\Sigma_N$ and $\xi(x):=\sum_{p\geq 1}\beta_{p}^2x^{2p}.$ We define the Gibbs measure $G_N$ on $\Sigma_N$ by
\begin{align}\label{add:eq3}
G_N(\vsi)=\frac{\exp H_N(\vsi)}{Z_N},
\end{align}
where the normalizing factor $Z_N$ is called the partition function. An important case of this model is the famous Sherrington-Kirkpatrick (SK)  model \cite{SK75}, where $\beta_{1,p}=\beta_{2,p}=0$ for all $p\geq 2.$ Now consider two independently sampled spin configurations $\vsi^1$ and $\vsi^2$ from $G_N.$ It is well-known that under the measure $\e G_N\times G_N$, the overlap $R(\vsi^1,\vsi^2)$ is essentially concentrated around a constant in some part of the high temperature region, where this region is defined as the set of all temperature parameters such that the infimum in the Parisi formula (see Subsection \ref{model} below) is achieved by a Dirac measure. While in the low temperature region, i.e., outside the high temperature region, the overlap is lack of self-averaging property \cite{Pan11:2} and is conjectured to have a nontrivial weak limit, called the Parisi measure. One typical way of measuring the instability of this spin system occurred by the change of external parameters is to sample independently $\vsi$ from $G_N$ and $\vtau$ from a new Gibbs measure $G_N'$ using a perturbed external parameters from $G_N$ and consider the behavior of the overlap $R(\vsi,\vtau)$ under $\e G_N\times G_N'.$ The phenomenon of chaos states that this overlap behaves very differently and is indeed concentrated near a constant no matter that the two systems $G_N$ and $G_N'$ are in the high or low temperature regime. This is precisely the statement that we will be proving in the paper under some mild assumptions on the external parameters. 

\smallskip

Let us define two mixed even-spin models and specify their external parameters in the following. Recall $\mathcal{G}$ from $(\ref{intro:eq12}).$ Let $\mathcal{G}^1$ and $\mathcal{G}^2$ be two copies of $\mathcal{G}$ such that they together form a jointly Gaussian process. In addition, for every $1\leq i_1,\ldots,i_{2p}\leq N$ and $p\geq 1,$ the pair $(g_{i_1,\ldots,i_{2p}}^1,g_{i_1,\ldots,i_{2p}}^2)$ is independent of each other and 
\begin{align*}
\e g_{i_1,\ldots,i_{2p}}^1g_{i_1,\ldots,i_{2p}}^2=t_{p}\in[0,1].
\end{align*}
Let $h^1$ and $h^2$ be two sub-Gaussian r.v. (not necessarily independent) that do not depend on $\mathcal{G}^1$ and $\mathcal{G}^2$. Let $(h_i^1,h_i^2)$ be i.i.d. copies of $(h^1,h^2)$ for $1\leq i\leq N$ independent of $\mathcal{G}^1$ and $\mathcal{G}^2$. We consider two mixed even-spin models with Gibbs measures $G_N^1$ and $G_N^2$ corresponding to the Hamiltonians $H_N^1(\vsi)$ and $H_N^2(\vtau)$ as in $(\ref{intro:eq8})$,
\begin{align}
\begin{split}
\label{intro:eq18}
H_{N}^1(\vsi)&=X_N^1(\vsi)+\sum_{i\leq N}h_i^1\sigma_i,\\
H_N^2(\vtau)&=X_N^2(\vtau)+\sum_{i\leq N}h_i^2\tau_i,
\end{split}
\end{align} 
where  $X_N^1(\vsi)$ and $X_N^2(\vtau)$ are defined similarly as $(\ref{intro:eq10})$ for $\vsi,\vtau\in\Sigma_N$ using $\vec{\beta}_1,$ $\mathcal{G}^1$ and $\vec{\beta}_2$, $\mathcal{G}^2$, respectively. Then the covariances of $X_N^1$ and $X_N^2$ can be easily computed as
\begin{align*}
\e X_N^1(\vsi^1)X_N^1(\vsi^2)&=N\xi_{1,1}(R(\vsi^1,\vsi^2)),\\
\e X_N^2(\vtau^1)X_N^2(\vtau^2)&=N\xi_{2,2}(R(\vtau^1,\vtau^2)),\\
\e X_N^1(\vsi^1)X_N^2(\vtau^1)&=N\xi_{1,2}(R(\vsi^1,\vtau^1)),
\end{align*}
for $\vsi^1,\vsi^2,\vtau^1,\vtau^2\in\Sigma_N,$ where
\begin{align}\label{add:eq2}
\xi_{1,1}(x)&=\sum_{p\geq 1}\beta_{1,p}^2 x^{2p},\,\,
\xi_{2,2}(x)=\sum_{p\geq 1}\beta_{2,p}^2 x^{2p},\,\,
\xi_{1,2}(x)=\sum_{p\geq 1}t_{p}\beta_{1,p}\beta_{2,p}x^{2p}.
\end{align} We will denote by $(\vsi^{\ell},\vtau^{\ell})_{\ell\geq 1}$ an i.i.d. sequence of replicas from the measure $G_N^1\times G_N^2$ and by $\left<\cdot\right>$ the Gibbs average with respect to $(G_N^1\times G_N^2)^{\otimes \infty}.$ One may regard the pair of Gibbs measures $(G_N^1,G_N^2)$ as $(G_N,G_N')$ mentioned above.

\smallskip

Now we present the main results. The first one is concerned with chaos in temperature, namely, the behavior of the overlap $R(\vsi,\vtau)$ between two systems $G_N^1$ and $G_N^2$ at different temperatures $\vec{\beta}_1$ and $\vec{\beta}_2$. For $j=1,2,$ let $$
\mathcal{I}_j=\{p\in\mathbb{N}:\beta_{j,p}\neq 0\}.
$$
We introduce a family of subsets of natural numbers,
\begin{align*}
\mathcal{C}_0&=\{\mathcal{I}\subseteq\mathbb{N}:\mbox{linear span of $1$ and $(x^{2p})_{p\in\mathcal{I}}$ is dense in $(C[0,1],\|\cdot\|_\infty)$}\}.
\end{align*}
Note that the M\"{u}ntz-Szasz theorem (see Theorem 15.26 \cite{Rudin}) provides a very simple criterion: $\mathcal{I}\in \mathcal{C}_0$ if and only if $\sum_{p\in\mathcal{I}}1/p=\infty.$ Define the following mild conditions on the temperature parameters $\vec{\beta}_1$ and $\vec{\beta}_2:$
\begin{itemize}
\item[$(C_1)$] There exist $\mathcal{A}\subseteq \mathcal{I}_1$ and $p_0\in \mathcal{I}_1\setminus \mathcal{A}$ such that $\mathcal{A}\in\mathcal{C}_0$ and for some $\nu\in\mathbb{R}$ we have $\beta_{2,p}=\nu\beta_{1,p}$ for all $p\in\mathcal{A}$ and $\beta_{2,p_0}\neq \nu\beta_{1,p_0}.$
\item[$(C_2)$] There exist $\mathcal{A}\subseteq \mathcal{I}_2$ and $p_0\in \mathcal{I}_2\setminus \mathcal{A}$ such that $\mathcal{A}\in\mathcal{C}_0$ and for some $\nu\in\mathbb{R}$ we have $\beta_{1,p}=\nu\beta_{2,p}$ for all $p\in\mathcal{A}$ and $\beta_{1,p_0}\neq \nu\beta_{2,p_0}.$
\end{itemize}
Two important examples of $\vec{\beta}_1$ and $\vec{\beta}_2$ satisfying both conditions $(C_1)$ and $(C_2)$ are that we add higher order spin interactions to the SK models and perturb either the SK temperatures or the higher order spin interaction temperatures at the same rate:
 
\begin{example}
$\beta_{1,p},\beta_{2,p}\neq 0$ for all $p\geq 1$ with $\beta_{1,1}\neq\beta_{2,1}$ and $\beta_{1,p}=\beta_{2,p}$ for all $p\geq 2$.
\end{example}

\begin{example}
$\beta_{1,p},\beta_{2,p}\neq 0$ for all $p\geq 1$ with $\beta_{1,1}=\beta_{2,1}$ and for some $\nu\neq 1,$ $\beta_{1,p}=\nu\beta_{2,p}$ for all $p\geq 2$.
\end{example}

\begin{theorem}[Temperature chaos]\label{thm:tc} 
Let $h^1$ and $h^2$ be jointly Gaussian. Suppose that $t_{p}=1$ for all $p\geq 1$ and that $\mathcal{I}_1$ and $\mathcal{I}_2$ satisfy $(C_1)$ and $(C_2)$, respectively. If $\e (h^j)^2=0$ for $j=1$ or $2$, then 
\begin{align}
\label{thm:tc:eq1}
\lim_{N\rightarrow\infty}\e\left<I(|R(\vsi,\vtau)|>\varepsilon)\right>=0,\,\,\forall \varepsilon>0.
\end{align}
If $\mbox{\rm Var}(h^j)\neq 0$ for both $j=1,2$, then there exists some constant $u_f$ such that
\begin{align}
\label{thm:tc:eq2}
\lim_{N\rightarrow\infty}\e\left<I(|R(\vsi,\vtau)-u_f|>\varepsilon)\right>=0,\,\,\forall \varepsilon>0.
\end{align}
\end{theorem}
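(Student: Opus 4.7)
The plan is to derive the overlap concentration from a differentiability statement for a tilted version of the coupled free energy. For $\lambda\in\mathbb{R}$, set
\begin{equation*}
\phi_N(\lambda):=\frac{1}{N}\mathbb{E}\log\sum_{\vsi,\vtau\in\Sigma_N}\exp\bigl(H_N^1(\vsi)+H_N^2(\vtau)+\lambda N R(\vsi,\vtau)\bigr),
\end{equation*}
which is convex in $\lambda$ with $\phi_N'(\lambda)=\mathbb{E}\langle R(\vsi,\vtau)\rangle_\lambda$. At $\lambda=0$ the two Hamiltonians decouple at the level of the partition function, so the Parisi formula for each individual model yields $\phi_N(0)\to F_1^\ast+F_2^\ast$, where $F_j^\ast:=\lim_N N^{-1}\mathbb{E}\log Z_N^j$. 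Concentration of $R(\vsi,\vtau)$ around a constant $u_f$ is therefore equivalent to showing that $\lim_N\phi_N$ is differentiable at $\lambda=0$ with slope $u_f$, after which convexity and the Ghirlanda-Guerra identities upgrade mean convergence to concentration.

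First I would prove an extended Guerra-Talagrand upper bound on $\phi_N(\lambda)$ by interpolating the two systems in parallel against a pair of Ruelle cascades coupled by a triple $(\mu_1,\mu_2,\gamma)$, where $\mu_1,\mu_2$ are Parisi measures on $[0,1]$ and $\gamma$ is a coupling on $[-1,1]$ whose $|\cdot|$-marginals are compatible with $\mu_1,\mu_2$; this yields $\limsup_N\phi_N(\lambda)\le\inf_{\mu_1,\mu_2,\gamma}\mathcal{P}(\lambda;\mu_1,\mu_2,\gamma)$. Matching values at $\lambda=0$ then pins any optimal triple to $\mathcal{P}(0;\mu_1,\mu_2,\gamma)=F_1^\ast+F_2^\ast$. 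Second, I would add to each $H_N^j$ a vanishing perturbation combining the standard Ghirlanda-Guerra terms with a new family indexed by the even monomials $R(\vsi,\vtau)^{2p}$. Passing to the limit along this perturbation yields the Ghirlanda-Guerra identities for each individual model together with a new family of identities that constrains the moments $\mathbb{E}\langle R(\vsi,\vtau)^{2p}f\rangle$ for $p\in\mathcal{A}$ in terms of the internal overlap structure of each model.

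To identify $u_f$, I would invoke the M\"{u}ntz-Szasz criterion $\mathcal{A}\in\mathcal{C}_0$: density of the span of $1$ and $(x^{2p})_{p\in\mathcal{A}}$ in $C[0,1]$ lets one transfer the new identities from polynomial to arbitrary continuous test functions of $|R(\vsi,\vtau)|$. When $\mathbb{E}(h^j)^2=0$ for some $j$, the corresponding model is spin-flip invariant, so the law of $R(\vsi,\vtau)$ under $\mathbb{E}\langle\cdot\rangle$ is symmetric about $0$; combined with the density above and the new identities, this forces $|R(\vsi,\vtau)|\to 0$ in probability, giving $u_f=0$. When $\mathrm{Var}(h^j)\neq 0$ for both $j$, the shared-disorder assumption $t_p=1$ together with $(C_1),(C_2)$ supplies strict convexity of $\mathcal{P}(0;\mu_1,\mu_2,\cdot)$ in the transversal direction indexed by $p_0\notin\mathcal{A}$, selecting a unique optimizer $\gamma_\ast=\delta_{u_f}$; the constant $u_f$ is then $\int x\,d\gamma_\ast(x)$.

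The main obstacle I anticipate is this uniqueness step: showing that the infimum in the extended Guerra bound at $\lambda=0$ is attained by a $\gamma$ that is a single Dirac mass. The degeneracy created by $t_p=1$ (the two models share the same disorder) is precisely what makes temperature chaos subtle, and $(C_1),(C_2)$ are tailored to break it: the nontrivial index $p_0\notin\mathcal{A}$ produces a strictly convex direction in which the Guerra functional penalizes any cross-coupling other than the correct one, while the density provided by $\mathcal{A}\in\mathcal{C}_0$ lifts this pointwise penalty to arbitrary continuous functionals of the overlap. Carrying this argument carefully through the cascade structure underlying the Parisi functional, while simultaneously verifying that the new identities hold alongside the standard Ghirlanda-Guerra ones, is the delicate technical heart of the proof.
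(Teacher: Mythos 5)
Your proposal assembles the right toolbox (an extended Guerra bound, the Ghirlanda--Guerra identities plus a new family of identities, and the M\"untz--Szasz criterion), but the architecture diverges from what actually works and contains two genuine gaps. First, the case $\e(h^j)^2=0$ is not settled by symmetry. Spin-flip invariance makes the law of $R(\vsi,\vtau)$ symmetric about the origin, but that is equally consistent with the overlap concentrating on $\{-c,+c\}$ for some $c>0$; symmetry plus density of even polynomials gives you control of $|R(\vsi,\vtau)|$, not its vanishing. The missing ingredient is a standalone theorem: in the absence of external field, $0\in\mbox{supp}\,\mu_P$ for every Parisi measure, so that $c_j=\min\mbox{supp}\,\mu_P^j=0$. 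Proving this requires a local stability analysis of the Parisi functional (a replica-symmetry-breaking analogue of Toninelli's Almeida--Thouless argument, yielding $\e(\partial_x\Phi_{\mu_P}(\chi,c))^2=c$ and $\xi''(c)\e(\partial^2_x\Phi_{\mu_P}(\chi,c))^2\le 1$) combined with a fixed point theorem: the map $u\mapsto\e\,\partial_x\Phi(\chi^1,c)\,\partial_x\Phi(\chi^2,c)$ has a unique fixed point, yet oddness forces both $0$ and $-c$ to be fixed points, so $c=0$. Only then does the identities-based bound $|R(\vsi,\vtau)|\lessim\sqrt{c_j}$ deliver \eqref{thm:tc:eq1}. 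Your proposal never isolates this step.

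Second, your Guerra interpolation ``coupled by a triple $(\mu_1,\mu_2,\gamma)$'' silently assumes the interpolating Gaussian fields exist for all cross-overlap values, but the required covariance structure is positive semi-definite only when $|u|\le\sqrt{v_1v_2}$ with $v_1,v_2$ in the supports of the two Parisi measures. The region $|u|>\sqrt{c_1c_2}$ cannot be reached by the interpolation at all, and this is exactly where the identities must carry the entire argument: the Ghirlanda--Guerra identities, the new identities derived from conditions $(C_1),(C_2)$ (which are used to decouple $\Phi_{j,n}$ from $\Psi_{j,n}$ when $t_p=1$, not to create a ``strictly convex transversal direction''), and a Cauchy--Schwarz bound $|R(\vsi,\vtau)|\lessim\sqrt{|R(\vsi^1,\vsi^2)|\,|R(\vtau^1,\vtau^2)|}$ combined with the positivity principle for the individual overlaps. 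Relatedly, your route through differentiability of the tilted free energy $\phi_N(\lambda)$ at $\lambda=0$ requires existence and differentiability of a limit that nobody knows how to establish here; the workable alternative is an upper bound on the \emph{constrained} free energy $p_{N,u}$ exhibiting a strict gap $-\frac{1}{2}(\phi_{v_1,v_2}(u)-u)^2$ below $p_N^1+p_N^2$ for $u\ne u_f$, upgraded to exponential Gibbs-measure bounds by concentration; uniqueness of $u_f$ comes from showing the fixed-point map has derivative of modulus strictly less than one, not from identifying a Dirac optimizer of a variational problem over couplings.
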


\smallskip

 Theorem \ref{thm:tc} is the first rigorous chaos result in temperature in the mixed even-spin model. It indicates the sensitivity of the model to the change of temperatures. However, due to technical reasons, it remains unknown how to verify $(\ref{thm:tc:eq1})$ and $(\ref{thm:tc:eq2})$ in the setting of the SK model, i.e., $\beta_{1,1}\neq \beta_{2,1}$ and $\beta_{1,p}=\beta_{2,p}=0$ for all $p\geq 2,$ which seems by far the most interesting case to physicists. Also the situation of constant external fields is unclear. Let us remark that the constant $u_f$ in Theorem \ref{thm:tc} as well as in Theorems \ref{thm:dc} and \ref{thm:efc} below could possibly be equal to zero. For instance, as one will see in Proposition $\ref{intro:prop1}$, if $h^1$ and $h^2$ are independent and symmetric with respect to the origin, then $u_f=0.$ The determination of $u_f$ is a highly technical issue. It is indeed the unique fixed point of a function related to Parisi's formula and measures that will be discussed in Section $\ref{sec2}$.

\smallskip

Next, let us turn to the main results on chaos in disorder. In this problem, we want to know the behavior of the overlap in the coupled system occurred by the change of the disorders. The first rigorous study of this problem without external field was given in \cite{Chatt09} and later more general situations of the models with external fields were handled in \cite{Chen11}. In view of the arguments therein, for technical purposes, the Hamiltonians for the two systems are assumed to be identically distributed. We prove that chaos in disorder is still valid even when two Hamiltonians do not have the same distribution if some mild conditions on the temperature parameters are added. The following is our main result.

\begin{theorem}[Disorder chaos]\label{thm:dc} Let $h^1$ and $h^2$ be jointly Gaussian r.v. Suppose that $0\leq t_{p}<1$ for some $p\in \mathcal{I}_1\cap\mathcal{I}_2.$ If $\e (h^j)^2=0$ and $\mathcal{I}_j\in\mathcal{C}_0$ for $j=1$ or $2$, then $(\ref{thm:tc:eq1})$ holds. If $\mbox{\rm Var} (h^j)\neq 0$ and $\mathcal{I}_j\in\mathcal{C}_0$ for both $j=1,2$, then $(\ref{thm:tc:eq2})$ holds.
\end{theorem}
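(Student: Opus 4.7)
The strategy is to compare the coupled free energy
\[
\bar{p}_N \;=\; \frac{1}{N}\e\log\sum_{\vsi,\vtau}\exp\bigl(H_N^1(\vsi)+H_N^2(\vtau)\bigr)
\]
with the sum of the two individual Parisi free energies, and to argue that asymptotic matching of the two forces the cross-overlap $R(\vsi,\vtau)$ to concentrate. First I would apply the extended Guerra replica symmetry breaking bound to the coupled Gibbs measure, parametrized by a pair of Parisi order parameters $\mu_1,\mu_2$ for the two individual systems together with a constant $q\in[-1,1]$ playing the role of a Dirac mass for the cross-overlap. Optimizing $\mu_1,\mu_2$ for each fixed $q$ yields
\[
\limsup_{N\goin}\bar{p}_N \;\le\; P(\vbeta_1)+P(\vbeta_2)+\min_{q\in[-1,1]}\Psi(q),
\]
where $P$ denotes the Parisi formula for a single mixed even-spin model and $\Psi(q)$ is an explicit correction that involves $\xi_{1,2}(q)$ together with the Parisi measures of the two individual systems (and, when external fields are present, the joint law of $(h^1,h^2)$). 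A matching lower bound via the Aizenman-Sims-Starr scheme applied to the coupled Hamiltonian shows that $\liminf_N\bar{p}_N$ attains the same value, so any subsequential limit of the law of $R(\vsi,\vtau)$ under $\e\langle\cdot\rangle$ must be supported on the argmin of $\Psi$.

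The crucial structural observation is that whenever $t_{p_0}<1$ for some $p_0\in\mathcal{I}_1\cap\mathcal{I}_2$, the Cauchy-Schwarz inequality applied term-by-term becomes strict:
\[
\xi_{1,2}(q) \;<\; \sqrt{\xi_{1,1}(q)\xi_{2,2}(q)}\qquad\text{for every }q\neq 0.
\]
This strictness is what forces $\Psi$ to possess a \emph{unique} minimizer $u_f\in[-1,1]$, characterized as the unique fixed point of a self-consistent equation built from the individual Parisi measures at $\vbeta_1,\vbeta_2$ and the joint distribution of $(h^1,h^2)$. In the case $\e(h^j)^2=0$ the symmetry $\vsi\to-\vsi$ (or $\vtau\to-\vtau$) immediately forces $u_f=0$; in the positive-variance case $u_f$ is nontrivial but still unique.

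Second, I would invoke the Ghirlanda-Guerra identities together with the new family of identities described in the introduction, applied to the coupled model after adding a generic small mixed $p$-spin perturbation that does not affect the limiting free energy. These identities severely restrict the joint law of the overlap array $\bigl(R(\vsi^\ell,\vtau^{\ell'})\bigr)_{\ell,\ell'\ge 1}$, and the hypothesis $\mathcal{I}_j\in\mathcal{C}_0$ enters precisely here: by the M\"untz-Sz\'asz theorem it ensures that the moments $\e\langle R(\vsi^1,\vsi^2)^{2p}\rangle$ for $p\in\mathcal{I}_j$ determine the full law of the self-overlap, so the identities pin down the measure and not merely a finite-dimensional projection. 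Combining this with the matching bound on $\Psi$ upgrades concentration on average into the pointwise statement
\[
\lim_{N\goin}\e\bigl\langle I(|R(\vsi,\vtau)-u_f|>\eps)\bigr\rangle \;=\; 0.
\]

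The hardest step is the rigidity argument: showing that the conjunction of the strict inequality $\xi_{1,2}(q)<\sqrt{\xi_{1,1}(q)\xi_{2,2}(q)}$ for $q\neq 0$ and the extended Ghirlanda-Guerra constraints genuinely pins down a unique limiting value $u_f$ rather than leaving a one-parameter family of candidates. This is where the density hypothesis $\mathcal{I}_j\in\mathcal{C}_0$ and the new family of identities must work in concert, and it is also where the two cases truly diverge: in the zero-variance case a parity argument closes uniqueness at $u_f=0$ almost immediately, while in the positive-variance case one must analyze a genuinely nonlinear fixed-point problem constructed from the individual Parisi measures of the two models at $\vbeta_1$ and $\vbeta_2$.
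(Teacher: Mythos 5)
There is a genuine gap, and it sits at the very foundation of your free-energy argument. The unrestricted coupled free energy $\bar{p}_N=\frac1N\e\log\sum_{\vsi,\vtau}\exp(H_N^1(\vsi)+H_N^2(\vtau))$ factorizes exactly: the sum equals $Z_N^1Z_N^2$, so $\bar p_N=p_N^1+p_N^2$ identically and carries no information whatsoever about the cross-overlap. Consequently your proposed correction term $\Psi(q)$ must vanish at its minimum in the limit, and "matching upper and lower bounds on $\bar p_N$" cannot localize $R(\vsi,\vtau)$. The paper instead works with the \emph{restricted} free energy $p_{N,u}$ of $(\ref{sec0:eq1})$, summing only over pairs with $R(\vsi,\vtau)=u$, and shows via the extended Guerra bound (Proposition \ref{sec2:prop2}) that $p_{N,u}\le \mathcal{P}^1+\mathcal{P}^2-\varepsilon^*$ for $u$ in $[-\sqrt{v_1v_2},\sqrt{v_1v_2}]$ away from $u_f$; exponential concentration of the free energies then converts this strict drop into $(\ref{add5})$. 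No lower bound is ever needed. Your appeal to a "matching lower bound via Aizenman--Sims--Starr for the coupled Hamiltonian" is not an available tool --- for constrained coupled systems such a lower bound is precisely the notoriously open part of this circle of problems --- so that step of your argument would fail.

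Two further misattributions matter. First, the hypothesis $0\le t_p<1$ for some $p\in\I_1\cap\I_2$ is not what yields uniqueness of the fixed point: in Theorem \ref{sec2:prop3} uniqueness follows from $|F'(u)|<1$, which uses only the non-strict Cauchy--Schwarz bound together with the non-constancy of $\partial_x\Phi_{\mu_P^j}$ and the conditions $(\ref{sec2:prop3:eq3})$--$(\ref{sec2:prop3:eq4})$. The condition $t_p<1$ is used elsewhere, in Lemma \ref{GG:lem1} and Proposition \ref{GG:prop1}, to produce the \emph{new} identities $\Psi_{j,n}(f,\psi_{2p})\to0$ directly (since $\beta_{j,p}\sqrt{1-t_p}\neq0$); these, combined with $\I_j\in\C_0$, the Ghirlanda--Guerra identities, positivity of the self-overlaps, and a Cauchy--Schwarz estimate on $N^{-1/2}(\vsi^1+\cdots+\vsi^n)$ and $N^{-1/2}(\vtau^1+\cdots+\vtau^n)$, control the region $|R(\vsi,\vtau)|>\sqrt{c_1c_2}$ that Guerra's bound cannot reach. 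Second, in the case $\e(h^j)^2=0$ your spin-flip symmetry only shows the law of $R(\vsi,\vtau)$ is symmetric, not that it concentrates; the paper's route is to prove $0\in\mbox{supp}\,\mu_P^j$ (Theorem \ref{sec3:thm1}, itself a nontrivial local-stability argument), so that $c_j=0$ and the identity-based bound $|R(\vsi,\vtau)|\le\sqrt{c_j}+\eps$ of Theorem \ref{GG:thm02} collapses to concentration at $0$. You would need to supply both of these ingredients to close the proof.
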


Lastly, suppose that the two systems use the same temperature parameters and disorders, i.e., $\vec{\beta}_1=\vec{\beta}_2$ and $t_{p}=1$ for all $p\geq 1.$ We would like to know how the overlap $R(\vsi,\vtau)$ in the coupled system is influenced when the external fields $h^1$ and $h^2$ are essentially different from each other. To begin with, let us give a counterexample to illustrate that the chaotic property does not always hold for arbitrary choices of $h^1$ and $h^2.$ For instance, one may consider $h^1$ and $h^2$ having the relation $h^1=-h^2$. Since $\xi_{1,1}=\xi_{2,2}=\xi_{1,2}$ is even, one may check easily that
$$(X_N^1(\vsi)+X_N^2(\vtau):\vsi,\vtau\in\Sigma_N)$$ 
and 
$$(X_N^1(\vsi)+X_N^2(-\vtau):\vsi,\vtau\in\Sigma_N)$$ 
have the same joint distribution. Thus, using $h^1=-h^2$ and change of variables $-\vtau\rightarrow\vtau,$ for every Borel measurable subset $A$ of $[-1,1],$ we obtain
\begin{align*}
&\e G_N^1\times G_N^2(\{(\vsi,\vtau):R(\vsi,\vtau)\in A\})=\e G_N^1\times G_N^1(\{(\vsi^1,\vsi^2):R(\vsi^1,\vsi^2)\in- A\}),
\end{align*}
where $-A:=\{-x:x\in A\}.$ As we have mentioned before, since the limiting distribution of the overlap under $\e G_N^1\times G_N^1$ is nontrivial in the low temperature regime, we can not witness chaos in this case. Thus, in order to capture the chaotic feature, further assumptions on the external fields are required. The theorem below provides one possible choice of $(h^1,h^2)$ by assuming that they are different in disorder.

\begin{theorem}[External field chaos]\label{thm:efc} 
Suppose that $\vec{\beta}_1=\vec{\beta}_2$ and $t_{p}=1$ for all $p\geq 1.$
Let $h^1$ and $h^2$ be two r.v. having the same sub-Gaussian distribution. If $\e (h^1\pm h^2)^2\neq 0,$ then for any $\varepsilon>0,$ there exists some positive constant $K$ independent of $N$ such that for all $N\geq 1,$
\begin{align}
\label{thm:efc:eq2}
\e\left<I(|R(\vsi,\vtau)-u_f|\geq\varepsilon)\right><K\exp\left(-\frac{N}{K}\right)
\end{align}
for some constant $u_f$.
\end{theorem}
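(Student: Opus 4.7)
The plan is to follow the scheme used for disorder chaos in Theorem \ref{thm:dc}, exploiting the simplification that when $\vec\beta_1=\vec\beta_2$ and $t_p=1$ one has $\xi_{1,1}=\xi_{2,2}=\xi_{1,2}$, so the two copies of the disorder coincide and the two systems differ only through the external fields $h^1,h^2$. I would derive \eqref{thm:efc:eq2} by showing that the constrained free energy
\[
F_N^\eps=\frac{1}{N}\e\log\sum_{(\vsi,\vtau):\,|R(\vsi,\vtau)-u_f|\geq\eps}\exp\bigl(H_N^1(\vsi)+H_N^2(\vtau)\bigr)
\]
is strictly smaller than the unconstrained coupled free energy $F_N=\frac{1}{N}\e\log\sum_{\vsi,\vtau}\exp(H_N^1(\vsi)+H_N^2(\vtau))$, in the quantitative sense that $\limsup_N(F_N^\eps-F_N)\leq-\delta(\eps)<0$. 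Combined with the trivial inequality $\e\langle I(|R(\vsi,\vtau)-u_f|\geq\eps)\rangle\leq\exp(N(F_N^\eps-F_N))$ and with Gaussian concentration of both free energies around their means, this gives the exponential bound $K\exp(-N/K)$.

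The first step is to produce a two-dimensional Parisi-type upper bound for $F_N^\eps$ by an extended Guerra replica symmetry breaking interpolation adapted to the coupled model. Because $\xi_{1,1}=\xi_{2,2}=\xi_{1,2}$, the error term of the interpolation is a manifestly nonnegative quadratic form in the overlaps, so one arrives at
\[
F_N^\eps\leq\inf_{\alpha_1,\alpha_2,\,|u-u_f|\geq\eps}\Pp(\alpha_1,\alpha_2,u),
\]
where the infimum is over pairs of discrete probability measures $\alpha_1,\alpha_2$ on $[0,1]$ and over admissible constant cross-overlaps $u$, and $\Pp$ is the two-copy Parisi functional of the coupled model, depending also on the joint law of $(h^1,h^2)$. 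The same interpolation without the overlap constraint yields $\lim_N F_N=\inf_{\alpha_1,\alpha_2,u}\Pp(\alpha_1,\alpha_2,u)$.

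The second step is to identify $u_f$ and exhibit the strict gap. I would study the slice $\Phi(u)=\inf_{\alpha_1,\alpha_2}\Pp(\alpha_1,\alpha_2,u)$ and argue that its unique minimizer is precisely the fixed point $u_f$ of the function described in Section \ref{sec2}. Uniqueness rests on a convexity analysis of $\Pp$ together with the Ghirlanda-Guerra identities and the new family of identities advertised in the abstract, which pin the coupled overlap distribution down sharply. The hypothesis $\e(h^1\pm h^2)^2\neq 0$ enters here to exclude the two degenerate situations: $h^1=h^2$, in which the coupled system collapses to a single system whose overlap need not concentrate in the low-temperature regime, and $h^1=-h^2$, which is the counterexample displayed just before the theorem statement. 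Under the stated assumption, $\Phi$ admits the unique minimizer $u_f$ and a quantitative gap $\Phi(u)-\Phi(u_f)\geq\delta(\eps)$ for all $|u-u_f|\geq\eps$.

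Finally, to upgrade this deterministic gap to the exponential estimate \eqref{thm:efc:eq2}, I would invoke Gaussian concentration for both $\frac{1}{N}\log Z_N^\eps$ and $\frac{1}{N}\log Z_N$, viewed as Lipschitz functionals of the shared Gaussian disorder $\mathcal G$; the i.i.d.\ sub-Gaussian fields $(h_i^1,h_i^2)_{i\leq N}$ are handled by a standard truncation whose discarded event contributes only an exponentially small correction. I expect the main obstacle to be the uniqueness and strict-gap analysis of the slice function $\Phi$: pinning down the correct variational form of $\Pp$ for the coupled model, matching it with the fixed-point equation that characterises $u_f$, and marshalling the full strength of the Ghirlanda-Guerra identities together with the new identities to exclude non-unique minimizers under the sole assumption $\e(h^1\pm h^2)^2\neq 0$. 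The Guerra interpolation and the concentration step, while technical, are by now fairly standard tools in the analysis of mixed even-spin models.
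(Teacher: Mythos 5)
Your overall architecture---bounding the constrained coupled free energy by a Guerra-type functional, extracting a strict gap away from $u_f$, and transferring it to the Gibbs measure by concentration---is the same as the paper's, but the two steps you yourself flag as the ``main obstacle'' are resolved in the paper by mechanisms quite different from the ones you propose, and your versions have genuine gaps. First, you propose to identify $u_f$ as the unique minimizer of a slice function $\Phi(u)$ via convexity of the coupled Parisi functional plus the Ghirlanda--Guerra identities and the new identities. The paper does not use any of these identities for this theorem, and for a good reason: as explained in Section 1, the identity-based control of $R(\vsi,\vtau)$ only yields $\lim_N\e\left<I(\cdot)\right>=0$, a rate-free statement, whereas \eqref{thm:efc:eq2} demands $K\exp(-N/K)$ for every $N$. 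Instead, $u_f$ is characterized as the unique fixed point of the map $\phi_{c,c}$ of \eqref{chaos:eq1}; the Guerra bound \eqref{sec2:prop2:eq1} carries the explicit penalty $-\tfrac12(\phi_{v_1,v_2}(u)-u)^2$, which is bounded below on $\{|u-u_f|\geq\varepsilon\}$ precisely because the fixed point is unique; and uniqueness comes not from convexity of a two-copy functional but from the contraction estimate $|\phi'|<1$ of Theorem \ref{sec2:prop3}, whose hypotheses are the local-stability (Almeida--Thouless type) inequalities for the Parisi measure. The hypothesis $\e(h^1\pm h^2)^2\neq0$ is used there to show $|u_f|<c$ strictly, a point your proposal does not address.

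Second, and more seriously, your single variational bound cannot cover the overlap values $|u|>\sqrt{v_1v_2}$: the correlated Gaussian pair $(\chi^1,\chi^2)$ of \eqref{sec2.2:eq1} that produces the penalty term only exists for $|u|\leq\sqrt{v_1v_2}$, so Theorem \ref{sec2:thm1} controls only that range. For $|u|$ near $1$ the paper invokes a separate Guerra bound \eqref{proof:eq2} (Proposition 11 of \cite{Chen11}), available exactly because the two Hamiltonians are identically distributed, in which the gap is $\mu_P([0,c'])K_2\int_{c'}^{c''}\e F_u\,\xi''\,dq$ and the hypothesis $\e(h^1\pm h^2)^2\neq0$ enters a second time to make $\e F_u>0$. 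Without an argument for this outer region your proof is incomplete; and replacing it by the identities, as your temperature/disorder template would suggest, forfeits the exponential rate that distinguishes this theorem from Theorems \ref{thm:tc} and \ref{thm:dc}.
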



\smallskip

Apparently, $(\ref{thm:efc:eq2})$ is much stronger a chaos result comparing to those in Theorems $\ref{thm:tc}$ and $\ref{thm:dc}$. The main reason  will be illustrated in our proof sketches for which we are going to discuss now. This paper is mainly motivated by two recent works \cite{Chen11} and \cite{CP12}. We consider the coupled free energy,
\begin{align}
\label{sec0:eq1}
p_{N,u}&:=\frac{1}{N}\e\log \sum_{R(\vsi,\vtau)=u}\exp\left(H_N^1(\vsi)+H_N^2(\vtau)\right)
\end{align}
for $u\in S_N:=\{i/N:-N\leq i\leq N\}$ and analyze this quantity via an extended Guerra replica symmetry breaking bound. Suppose that $\mu_P^1$ and $\mu_P^2$ are the Parisi measures (see Definition \ref{def2} below) corresponding to the two systems, respectively. Set $c_1=\min\mbox{supp}\mu_P^1$ and $c_2=\min\mbox{supp}\mu_P^2.$ We show that this bound naturally gives rise to a function that determines $u_f$ and implies the statement that for any $\varepsilon>0$, there exists some $\varepsilon^*>0$ such that if $N$ is sufficiently large, then
\begin{align}\label{add4}
p_{N,u}\leq \frac{1}{N}\e\log Z_N^1+\frac{1}{N}\e\log Z_N^2-\varepsilon^*
\end{align}
for all $u\in S_N$ with $|u|\leq \sqrt{v_1v_2}$ and $|u-u_f|\geq \varepsilon,$ where $v_1>c_1$ and $v_2>c_2$ are two constants independent of $N$ and $Z_N^1$ and $Z_N^2$ are the partition functions of the two systems. From this, a standard application of concentration of measure for disorders $\mathcal{G}^1,\mathcal{G}^2$ and external fields $(h_i^1)_{1\leq i\leq N},(h_i^2)_{1\leq i\leq N}$ means
\begin{align}
\label{add5}
\e\left<I(|R(\vsi,\vtau)|\leq \sqrt{v_1v_2},\,|R(\vsi,\vtau)-u_f|\geq \varepsilon)\right>\leq K\exp\left(-\frac{N}{K}\right)
\end{align}
for all $N\geq 1,$ where $K>0$ is a constant independent of $N.$ One would like to expect that using appropriate choices of parameters for Guerra's bound also implies $(\ref{add4})$ for all $u\in S_N$ with $|u|\geq \sqrt{v_1v_2}$ and again from concentration of measure, 
\begin{align}
\label{add6}
\e\left<I(|R(\vsi,\vtau)|\geq \sqrt{v_1v_2})\right>\leq K\exp\left(-\frac{N}{K}\right)
\end{align}
for all $N\geq 1.$ This together with $(\ref{add5})$ will then yield an exponential bound as (\ref{thm:efc:eq2}). It turns out that $(\ref{add6})$ can be successfully carried out and will be our main approach to the problem of chaos in external field as in Theorem $\ref{thm:efc}$, which relies heavily on the fact that the Hamiltonians in the two systems are identically distributed. Unfortunately, in the setting chaos in temperature or disorder, this fact is generally not valid that creates highly intractable difficulties of choosing parameters in Guerra's bound. To resolve this technical issue, we will adapt another approach \cite{CP12} by considering the Ghirlanda-Guerra identities and developing a new family of identities for the coupled system under the mild assumptions on the temperature parameters and disorders. These two families of identities contain the information about how the spin configurations between two systems interact with each other that allows us to control the overlap $R(\vsi,\vtau)$ between the two systems by using the overlaps $R(\vsi^1,\vsi^2)$ and $R(\vtau^1,\vtau^2)$ from the individual systems. Ultimately they lead to a weak result,
\begin{align*}
\lim_{N\rightarrow\infty}\e\left<I(|R(\vsi,\vtau)|\geq \sqrt{v_1v_2})\right>=0.
\end{align*}
This and $(\ref{add5})$ together imply the conclusions of Theorems \ref{thm:tc} and \ref{thm:dc}. 

\smallskip

The rest of the paper is organized as follows. Section \ref{sec3} is devoted to studying some basic properties of Parisi's measures that are needed in our chaos results. In particular, we prove that in the absence of external field, the supports of Parisi's measures always contain the origin for all temperatures. The central proof of this result is played by a fundamental fixed point theorem that will be used later to determine the constant $u_f$ in Section \ref{sec2}. Section \ref{sec1} begins by recalling the extended Guerra's replica symmetry breaking bound for the coupled free energy $(\ref{sec0:eq1})$. We will choose parameters for this bound to derive a manageable bound for the coupled free energy in terms of a function $\phi_{v_1,v_2}$ as in $(\ref{chaos:eq1})$ below and the Parisi formulas for the two systems. In Section \ref{sec2}, we will investigate the behavior of the overlap $R(\vsi,\vtau)$ inside the interval $[-\sqrt{c_1c_2},\sqrt{c_1c_2}]$. The argument relies on the fixed point theorem established in Section \ref{sec3} that allow us to determine $u_f$ and to derive an exponentially tail control $(\ref{add5})$. In Section \ref{identity}, we demonstrate how to use the given conditions in Theorems \ref{thm:tc} and \ref{thm:dc} to derive the Ghirlanda-Guerra identities and a new family of identities for the coupled system. They together with an application of the Cauchy-Schwartz inequality provide an approach to controlling the overlap $R(\vsi,\vtau)$ via $R(\vsi^1,\vsi^2)$ and $R(\vtau^1,\vtau^2)$. Finally, we combine all results in every section to prove Theorems \ref{thm:tc}, \ref{thm:dc}, and \ref{thm:efc} in Section 6. 

\smallskip
\smallskip

\begin{Acknowledgements}
The author would like to thank Antonio Auffinger, Wan-Ching Hu, and the anonymous referee for their careful reading and several suggestions regarding the presentation of the paper.
\end{Acknowledgements}

\section{Properties of Parisi's measures}\label{sec3}

The Parisi formula and measures are intimately related to the investigation of chaos problem as they will induce a crucial function that determines the location at where the overlap $R(\vsi,\vtau)$ in the coupled system is concentrated, see Sections \ref{sec1} and \ref{sec2} below. In this section, we will recall their definitions and study the support of the Parisi measures. 

\subsection{Parisi's formula and measures}\label{model}

For given temperature $\vec{\beta}$ and external field $h,$ recall the Gibbs measure $G_N$ and partition function $Z_N$ from $(\ref{add:eq3}).$ In statistical physics, the thermodynamic limit of the free energy 
$$p_N:=\frac{1}{N}\e\log Z_N$$ 
is one of the most important quantities that describes the macroscopic behavior of the system. It can be computed by the famous Parisi formula described below. For any given integer $k\geq 0,$ let $\mathbf{m}=(m_p)_{0\leq p\leq k+1}$ and $\mathbf{q}=(q_p)_{0\leq p\leq k+2}$ be real numbers satisfying
\begin{align}
\begin{split}\label{intro:eq0}
m_0&=0\leq m_1\leq \cdots\leq m_k\leq m_{k+1}=1,\\
q_0&=0\leq q_1\leq \cdots\leq q_{k+1}\leq q_{k+2}=1.
\end{split}
\end{align}
One may think of this triplet $(k,\mathbf{m},\mathbf{q})$ as a probability measure $\mu$ on $[0,1]$ that has all of its masses concentrated at a finite number of points $q_1,\ldots,q_{k+1}$ and $\mu([0,q_p])=m_p$ for $0\leq p\leq k+1.$ Let $z_0,\ldots,z_{k+1}$ be independent centered Gaussian r.v. with $\e z_p^2=\xi'(q_{p+1})-\xi'(q_p)$ for $0\leq p\leq k+1.$ Starting with
$$
X_{k+2}=\log \cosh\left(h+\sum_{0\leq p\leq k+1}z_p\right),
$$
we define decreasingly for $1\leq p\leq k+1,$ $$
X_p=\frac{1}{m_p}\log \e_p\exp m_p X_{p+1},
$$
where $\e_p$ means the expectation on the r.v. $z_p,z_{p+1},\ldots,z_{k+1}.$ If $m_p=0$ for some $p,$ we define $X_p=\e_pX_{p+1}.$ Finally, we define $X_0=\e X_1.$ Set
\begin{align}
\label{intro:eq5}
\mathcal{P}_k(\mathbf{m},\mathbf{q})=\log 2+X_0-\frac{1}{2}\sum_{p=1}^{k+1}m_p(\theta(q_{p+1})-\theta(q_p)),
\end{align}
where $\theta(x):=x\xi'(x)-\xi(x).$ The importance of this quantity lies on the fact that it yields Guerra's bound for the free energy \cite{Guerra03},
\begin{align}\label{intro:eq6}
p_N\leq \mathcal{P}_k(\mathbf{m},\mathbf{q})
\end{align} 
for any given triplet $(k,\mathbf{m},\mathbf{q}).$ Usually, we call $\mathcal{P}_k(\mathbf{m},\mathbf{q})$ the replica symmetry bound if $k=0$ and the $k$-th level replica symmetry breaking bound if $k\geq 1$ and all strict inequalities in \eqref{intro:eq0} hold. Set
\begin{align}
\label{intro:eq13}
\mathcal{P}(\xi,h)=\inf_{(k,\mathbf{m},\mathbf{q})}\mathcal{P}_k(\mathbf{m},\mathbf{q}),
\end{align}
where the infimum is taken over all triplets. The Parisi formula states that the thermodynamic limit of the free energy is given by the variational formula $(\ref{intro:eq13})$,
\begin{align}\label{pf}
\lim_{N\rightarrow\infty}p_N=\mathcal{P}(\xi,h).
\end{align}
This formula was first rigorously verified in \cite{Tal06}. Note that $\mathcal{P}_{k}(\mathbf{m},\mathbf{q})=\mathcal{P}_{k'}(\mathbf{m}',\mathbf{q}')$ if two triplets $(k,\mathbf{m},\mathbf{q})$ and $(k',\mathbf{m}',\mathbf{q}')$ induce the same probability distribution. Using this, we can define a functional $\mathcal{P}(\xi,h,\cdot)$ on the space of all probability measures on $[0,1]$, that consist of a finite number of jumps, by letting $\mathcal{P}(\xi,h,\mu)=\mathcal{P}_k(\mathbf{m},\mathbf{q})$ if $\mu$ corresponds to the triplet $(k,\mathbf{m},\mathbf{q}).$ It is well-known \cite{Guerra03} that this functional is Lipschitz continuous with respect the metric $d(\mu,\mu')=\int_0^1|\mu([0,x])-\mu'([0,x])|dx.$ Thus, we can extend $\mathcal{P}(\xi,h,\cdot)$ continuously to the space of all probability measures on $[0,1]$ and for simplicity, we will still denote this extension by $\mathcal{P}(\xi,h,\cdot).$ This then allows us to replace the infimum in the Parisi formula by taking minimum over all probability measures on $[0,1]$. 

\begin{definition}
\label{def1} Let $\mu$ be a probability measure corresponding to the triplet $(k,\mathbf{m},\mathbf{q})$. Given $\varepsilon>0$, we say that $\mu$ satisfies condition MIN$(\varepsilon)$ if the following occurs. First, the sequences $\mathbf{m}=(m_p)_{0\leq p\leq k+1}$ and $\mathbf{q}=(q_p)_{0\leq p\leq k+2}$ satisfy
\begin{align*}
m_0&=0<m_1<\cdots<m_k<m_{k+1}=1\\
q_0&=0\leq q_1<\cdots<q_{k+1}<q_{k+2}=1.
\end{align*}
In addition, 
$$
\mathcal{P}_k(\mathbf{m},\mathbf{q})\leq \mathcal{P}(\xi,h)+\varepsilon
$$
and 
$$
\mathcal{P}_{k}(\mathbf{m},\mathbf{q})\,\,\mbox{realizes the minimum of $\mathcal{P}_k$ over all choices of $\mathbf{m}$ and $\mathbf{q}$}.
$$
\end{definition}

Let us remark that for any given $\varepsilon>0$, one can always find a $\mu\in\mbox{MIN}(\varepsilon)$ by Lemma 14.5.5 and Proposition 14.7.5 in \cite{Tal11}. In addition, if $\e h^2\neq 0,$ then one further has $q_1>0$. As one might expect, there may have several minimizers to Parisi's formula. Among possibly many minimizers, we are particularly interested in those, called the Parisi measures $\mu_P$ defined below. 

\begin{definition}
\label{def2}
A probability measure $\mu_P$ is called a Parisi measure (corresponding to the function $\xi$ and external field $h$) if it is the weak limit of a sequence of probability measures $\mu_n\in \mbox{MIN}(\varepsilon_n)$ for some sequence of real numbers $(\varepsilon_n)_{n\geq 1}$ with $\varepsilon_n\downarrow 0.$
\end{definition}

\smallskip

In physics, it is conjectured that the Parisi measure is unique and it is the limiting distribution of the overlap. Under suitable technical assumption on $\vec{\beta}$, such as $\beta_p\neq 0$ for all $p\geq 1,$ these statements are verified to be valid, but the general situation remains open. 

\smallskip

There are two basic properties about the Parisi measures and the overlap that are of great importance and are intimately related to the study of chaos phenomena. First, in the presence of external field, $\e h^2\neq 0,$ they satisfy a positivity principle, namely, for any Parisi measure $\mu_P$, we have that 
\begin{align}
\label{intro:eq20}
c:=\inf\mbox{supp}\mu_P>0
\end{align} 
and for all $c'<c,$
\begin{align}
\label{intro:eq19}
\e G_N\times G_N(\{(\vsi^1,\vsi^2):R(\vsi^1,\vsi^2)\leq c'\})\leq K\exp \left(-\frac{N}{K}\right)
\end{align}
for all $N\geq 1,$ where $K$ is a positive constant independent of $N.$ This result can be found in Section 14.12 \cite{Tal11}. The second property is concerned with their behavior in the absence of external field, $\e h^2=0$. It is believed according to physicists' numerical simulations \cite{MPV} that in this case the origin is contained in the support of the limiting distribution of the overlap. It turns out that mathematically there are several approaches to verify this observation in the high temperature regime (see Chapter 1 \cite{Tal11}) but it is by no means clear how to attack this problem in the low temperature regime. In this paper, we prove that at least this observation is true for the Parisi measures. Below is the statement of our main result.

\begin{theorem}
\label{sec3:thm1}
For any $\vec{\beta},$ if $\e h^2=0,$ then $0\in\mbox{supp}\mu_P$ for every Parisi measure $\mu_P.$
\end{theorem}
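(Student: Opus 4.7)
I would prove Theorem~\ref{sec3:thm1} by contradiction combined with a fixed point argument. Assume that $\mu_P$ is a Parisi measure with $q_*:=\inf\mbox{supp}\,\mu_P>0$. By Definition~\ref{def2}, there exists a sequence $\mu_n\in\mbox{MIN}(\varepsilon_n)$ with $\varepsilon_n\downarrow 0$ and $\mu_n\to \mu_P$ weakly; writing $\mu_n$ via parameters $(k_n,\mathbf{m}^{(n)},\mathbf{q}^{(n)})$, Definition~\ref{def1} forces the strict inequality $q_1^{(n)}>0$. Since each $\mu_n$ is finitely supported with $q_1^{(n)}$ its minimum support point, weak convergence gives $q_1^{(n)}\to q_*$.

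Next, I would extract a first-order optimality condition at $q_1^{(n)}$. Because $\mathcal{P}_{k_n}(\mathbf{m}^{(n)},\mathbf{q}^{(n)})$ realizes the minimum of $\mathcal{P}_{k_n}$ over all admissible $(\mathbf{m},\mathbf{q})$, and $q_1^{(n)}>0$ lies strictly in the interior of its feasible range, the partial derivative $\partial\mathcal{P}_{k_n}/\partial q_1$ must vanish at the minimizer. Differentiating the recursive definition of $X_0,X_1,\ldots,X_{k_n+2}$ appearing in $(\ref{intro:eq5})$ with respect to $q_1$, and simplifying using Gaussian integration by parts in the variables $z_0,z_1,\ldots$, the vanishing condition reduces to a self-consistency identity of the form $q_1^{(n)}=F_n(q_1^{(n)})$, where $F_n$ is an explicit Gaussian expectation depending on the remaining Parisi parameters, on $h$, and on $\xi$. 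Sub-Gaussian integrability of $h$ together with Lipschitz continuity of $\mathcal{P}(\xi,h,\cdot)$ in $\mu$ then allow me to pass to the limit $n\to\infty$, producing a limiting fixed point equation $q_*=F(q_*)$.

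The final step invokes the ``fundamental fixed point theorem'' of this section. When $h=0$, the parity $\xi(x)=\xi(-x)$ combined with $\e h^2=0$ makes the entire Parisi solution and the associated Gaussian integrands symmetric under $z\mapsto -z$, which in particular forces $F(0)=0$. The fixed point theorem then asserts uniqueness of this fixed point in the relevant subinterval of $[0,1]$, yielding $q_*=0$ and contradicting the standing assumption $q_*>0$. Since this is exactly the ingredient later used in Section~\ref{sec2} to pin down $u_f$, it is natural to formulate and prove the uniqueness statement in a broader abstract framework that covers both applications at once.

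\textbf{Main obstacle.} The heart of the argument is the uniqueness clause of the fixed point theorem in the low-temperature regime. The naive replica-symmetric analogue of $F$, namely $q\mapsto \e\tanh^2(z\sqrt{\xi'(q)})$, admits non-trivial positive fixed points for large $\vec{\beta}$, so one cannot rely on simple contraction or on Jensen's inequality alone. Instead, the map $F$ must be expressed through the full iterative Parisi structure and analyzed using strict convexity of $\mu\mapsto\mathcal{P}(\xi,h,\mu)$, together with monotonicity and parity properties inherited from $h=0$. A secondary technical point is the passage to the limit in the previous step: one needs uniform-in-$n$ control on the tail behavior of $\mathbf{m}^{(n)},\mathbf{q}^{(n)}$ and on the iterated exponentials $X_p^{(n)}$, which is precisely where the strict monotonicity and minimality built into the $\mbox{MIN}(\varepsilon)$ condition become indispensable.
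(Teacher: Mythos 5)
Your overall architecture (contradiction, first-order stationarity at the bottom of the support, passage to the limit, parity forcing $0$ and $-c$ to be fixed points, and uniqueness via a fixed point theorem) matches the paper's strategy. But there is a genuine gap at the crux: you never supply the ingredient that makes the uniqueness clause of the fixed point theorem true, and the substitute you propose would not do the job. The first-order condition alone yields only $(\ref{sec3:eq3})$, i.e.\ that $c$ is a fixed point of $F(u)=\e\,\partial_x\Phi_{\mu_P}(\chi^1,c)\,\partial_x\Phi_{\mu_P}(\chi^2,c)$ with $\e\chi^1\chi^2=\xi'(u)$; by parity $0$ and $-c$ are then also fixed points, and nothing contradicts anything. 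What Theorem \ref{sec2:prop3} actually needs, besides $(\ref{sec2:prop3:eq3})$, is the \emph{second-order} stability inequality $(\ref{sec2:prop3:eq4})$, in this context $\xi''(c)\,\e\bigl(\partial_x^2\Phi_{\mu_P}(\chi,c)\bigr)^2\le 1$ (the Toninelli/Almeida--Thouless-type condition $(\ref{sec3:eq2})$, resp.\ $(\ref{sec3:eq4})$ in the RSB case). It is this inequality, fed through Cauchy--Schwarz, that gives $|F'(u)|<1$ on $(-c,c)$ and hence uniqueness. It is extracted not from first-order stationarity in the $q_r$'s but from second-order local stability of near-minimizers under inserting an extra level $(m,u)$ into the Parisi sequence (Lemma \ref{sec3:lem2}, via the functions $f_r$ and $(\ref{sec3:lem2:proof:eq4})$). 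Your appeal to ``strict convexity of $\mu\mapsto\mathcal{P}(\xi,h,\mu)$'' does not replace this: convexity (which is not established in or available to this paper) would address uniqueness of the minimizing measure, not the contraction property of the auxiliary map $F$, and you correctly observe yourself that the naive map already has multiple fixed points at low temperature — which is precisely why some quantitative input beyond stationarity is indispensable.

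A secondary but real issue is the limiting step. Weak convergence $\mu_n\to\mu_P$ does not give $q_1^{(n)}\to q_*$: the smallest atom of $\mu_n$ may sit far below $q_*$ while carrying vanishing mass. The paper instead works with the index $s$ for which $q_s\le c<q_{s+1}$, splits into the two cases $q_s\to c$ and $q_{s+1}\to c$, and must show that the tilting weights $W_1\cdots W_{s-1}$ converge to $1$ in $L^1$ (Lemmas \ref{sec3:lem3} and \ref{sec3:lem4}) before the stationarity identities at level $s$ or $s+1$ can be converted into $(\ref{sec3:eq3})$ and $(\ref{sec3:eq4})$ for $\Phi_{\mu_P}$. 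Your uniform-control remark gestures at this, but as written the identification of the limiting fixed point equation at $q_*$ is not justified.
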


Although in this paper Theorem \ref{sec3:thm1} will only be used to derive our chaos results, it is also of independent interest in understanding the structure of the pure states of the Gibbs measure. Let us remark that in the spherical Sherrington-Kirkpatrick model without external field \cite{Pan+Tal07}, the Parisi measure consists of a single point mass at some $c>0$ on the low temperature regime, which is very different to our result in Theorem \ref{sec3:thm1}. 

\subsection{An auxiliary function and a fixed point theorem}

The central rhythm of the proof for Theorem \ref{sec3:thm1} and our results on chaos are played by an auxiliary function and a fixed point theorem given below. Suppose that $\mu$ is a probability measure corresponding to some $(k,\mathbf{m},\mathbf{q})$. Recall $X_0$ from $(\ref{intro:eq5})$ by using this triplet. A very nice property about this quantity says that it can also be computed as
$\e \Phi_\mu(h,0)$, where $\Phi_\mu:\mathbb{R}\times [0,1]\rightarrow\mathbb{R}$ is the solution to the following PDE,
\begin{align}
\label{intro:eq14}
\frac{\partial\Phi_\mu}{\partial q}=-\frac{\xi''(q)}{2}\left(\frac{\partial^2\Phi_\mu}{\partial x^2}+\mu([0,q])\left(\frac{\partial\Phi_\mu}{\partial x}\right)^2\right)
\end{align}
with $\Phi_\mu(x,1)=\log\cosh x.$ Let $\mu_P$ be a Parisi measure and $(\mu_n)_{n\geq 1}$ be any sequence of probability measures consisted of a finite number of point masses that converges weakly to $\mu_P$ . Let $\Phi_{\mu_n}$ be the PDE solution $(\ref{intro:eq14})$ associated to $\mu_n.$ From the Lipschitz property of the Parisi functional, one sees that $(\Phi_{\mu_n})_{n\geq 1}$ converges uniformly on $\mathbb{R}\times[0,1]$. Define \begin{align}
\label{intro:eq16}
\Phi_{\mu_P}=\lim_{n\rightarrow\infty}\Phi_{\mu_n}.
\end{align} 
Note that from the Lipschitz property of the Parisi function, $\Phi_{\mu_P}$ is indeed independent of the choice of the sequence $(\mu_n)_{n\geq 1}.$ Let us summarize some further properties about $\Phi_{\mu_P}$ that will be used throughout the paper in the following proposition. 

\begin{proposition}\label{add:prop1} The following facts hold for $\Phi_{\mu_P}.$
\begin{enumerate}
\item[$(a)$] For $0\leq j\leq 3,$ $\lim_{n\rightarrow\infty}\frac{\partial^j \Phi_{\mu_n}}{\partial x^j}=\frac{\partial^j \Phi_{\mu_P}}{\partial x^j}$ uniformly on $\mathbb{R}\times[0,1]$.
\item[$(b)$] $\|\frac{\partial \Phi_{\mu_P}}{\partial x}\|_\infty\leq 1$, $0\leq\inf_x\frac{\partial^2 \Phi_{\mu_P}}{\partial x^2}<\|\frac{\partial^2 \Phi_{\mu_P}}{\partial x^2}\|_\infty\leq 1$, and $\|\frac{\partial^3 \Phi_{\mu_P}}{\partial x^3}\|_\infty\leq 4$. 
\item[$(c)$] $\frac{\partial \Phi_{\mu_P}}{\partial x}(\cdot,q)$ is odd for any $q.$ 
\item[$(d)$] If $\e h^2\neq 0,$ recalling from $(\ref{intro:eq20})$, we have that $c>0$ and
\begin{align*}
\e\left(\frac{\partial\Phi_{\mu_P}}{\partial x}(h+\chi,c)\right)^2&=c,\\
\xi''(c)\e\left(\frac{\partial^2\Phi_{\mu_P}}{\partial x^2}(h+\chi,c)\right)^2&\leq 1,
\end{align*}
where $\chi$ is centered Gaussian with $\e \chi^2=\xi'(c)$ and is independent of $h.$ 
\end{enumerate}
\end{proposition}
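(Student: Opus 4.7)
The strategy is to work with the discrete approximants $\mu_n$ converging weakly to $\mu_P$, propagate all four properties backward from the terminal condition $\Phi_\mu(x,1)=\log\cosh x$, and then pass to the limit using Lipschitz continuity of the Parisi functional in $\mu$. For any discrete $\mu$ with data $(k,\mathbf{m},\mathbf{q})$, $\mu([0,q])$ equals $m_p$ on $(q_p,q_{p+1})$, so (\ref{intro:eq14}) becomes autonomous on each strip and the Cole--Hopf substitution $V=\exp(m_p\Phi_\mu)$ (or $V=\Phi_\mu$ if $m_p=0$) reduces it to the backward heat equation $\partial_qV=-\tfrac{\xi''(q)}{2}\partial_x^2V$. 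This yields the explicit recursion
\begin{equation*}
\Phi_\mu(x,q_p)=\frac{1}{m_p}\log\e\exp\bigl(m_p\Phi_\mu(x+z_p,q_{p+1})\bigr),\qquad z_p\sim N(0,\xi'(q_{p+1})-\xi'(q_p)),
\end{equation*}
which is the main engine of the proof.

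Differentiating this recursion in $x$ expresses $\partial_x^j\Phi_\mu(\cdot,q_p)$ as an average against the tilted probability measure $\Lambda_p$ with density proportional to $\exp(m_p\Phi_\mu(x+z_p,q_{p+1}))$. In particular $\partial_x\Phi_\mu(x,q_p)=\e_{\Lambda_p}\partial_x\Phi_\mu(x+z_p,q_{p+1})$ is a convex combination and $\partial_x^2\Phi_\mu(x,q_p)=\e_{\Lambda_p}\partial_x^2\Phi_\mu(x+z_p,q_{p+1})+m_p\mathrm{Var}_{\Lambda_p}(\partial_x\Phi_\mu(x+z_p,q_{p+1}))$. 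Starting from $|\tanh|\le 1$ and $\mathrm{sech}^2\in(0,1]$ at $q=1$, a backward induction on $p$ gives $|\partial_x\Phi_\mu|\le 1$ and $0\le\partial_x^2\Phi_\mu\le 1$ uniformly in $\mu$; the upper bound on $\partial_x^2$ is the standard Guerra--Talagrand step, using $m_p\mathrm{Var}_{\Lambda_p}(\partial_x\Phi_\mu)\le 1-(\e_{\Lambda_p}\partial_x\Phi_\mu)^2$. A further differentiation gives $|\partial_x^3\Phi_\mu|\le 4$ from the analogous averaging identity together with $|\partial_x^3\log\cosh|\le 4/(3\sqrt 3)$. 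Since these bounds are uniform in the discrete $\mu$, they survive the limit and yield (b); strictness of $\inf\partial_x^2\Phi_{\mu_P}<\|\partial_x^2\Phi_{\mu_P}\|_\infty$ follows from $\partial_x^2\Phi_{\mu_P}(x,1)=\mathrm{sech}^2x$ being non-constant. Combined with the (easy to read off) Lipschitz dependence of the recursion on $\mu$ in the metric $d$, the families $\{\partial_x^j\Phi_{\mu_n}\}_{n\ge 1}$ for $j=0,1,2,3$ are uniformly bounded and equicontinuous on $\mathbb{R}\times[0,1]$, so Arzel\`a--Ascoli together with the already known uniform convergence $\Phi_{\mu_n}\to\Phi_{\mu_P}$ produces (a). Part (c) is likewise inherited from the terminal data: $\log\cosh$ is even, and both Gaussian convolution and Cole--Hopf averaging preserve evenness of $\Phi_\mu(\cdot,q)$; hence $\partial_x\Phi_\mu(\cdot,q)$ is odd for every discrete $\mu$ and every $q$, and (a) passes this to $\mu_P$.

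Part (d) is where the role of $\mu_P$ as a \emph{minimizer} of the Parisi formula actually enters. The positivity $c>0$ when $\e h^2\neq 0$ is exactly (\ref{intro:eq20}). For the identity $\e(\partial_x\Phi_{\mu_P}(h+\chi,c))^2=c$, I would use the first-order optimality condition: for any competing probability measure $\nu$, the Gateaux derivative at $\mu_P$ in direction $\nu-\mu_P$ is non-negative, and a direct differentiation of the discrete recursion shows that this derivative equals $\tfrac{1}{2}\int_0^1\xi''(q)\bigl(q-\e(\partial_x\Phi_{\mu_P}(h+Z_q,q))^2\bigr)\,d(\nu-\mu_P)(q)$ with $Z_q\sim N(0,\xi'(q))$; this forces $\xi''(q)(q-\e(\partial_x\Phi_{\mu_P}(h+Z_q,q))^2)=0$ on $\mathrm{supp}\,\mu_P$, and evaluating at $q=c$ using $\xi''(c)>0$ gives the claimed identity with $\chi=Z_c$. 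The inequality $\xi''(c)\e(\partial_x^2\Phi_{\mu_P}(h+\chi,c))^2\le 1$ is the corresponding second-order condition at the same point: the non-negativity of the second variation of the Parisi functional along a perturbation concentrated near $c$ reduces, after the same recursion-differentiation, to this scalar inequality. The main obstacle will be carrying out these variations at the level of the continuous functional rather than on the discrete minimizers $\mu_n$; this is exactly the place where the uniform convergence of the first, second, and third $x$-derivatives established in (a) is used in an essential way to commute $\mu_n\to\mu_P$ with the differentiation in $\mu$.
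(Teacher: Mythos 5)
The paper's own ``proof'' of this proposition is purely by citation (Theorem 3.2 of \cite{Tal07}, Lemma 14.7.16 of \cite{Tal11}, Lemmas 2 and 12 of \cite{Chen11}), and your reconstruction of parts (a)--(c) does follow the same underlying route as those references: backward induction through the Cole--Hopf recursion and the tilted-measure representation of the $x$-derivatives. Two caveats there. First, (a) asserts uniform convergence on the \emph{non-compact} set $\mathbb{R}\times[0,1]$, which Arzel\`a--Ascoli does not give; what actually does the work is a quantitative estimate $\|\partial_x^j\Phi_\mu-\partial_x^j\Phi_{\mu'}\|_\infty\leq L\,d(\mu,\mu')$, and proving this for $j=2,3$ is precisely the ``much more tedious argument'' the paper alludes to --- it is not ``easy to read off.'' Second, the strict inequality $\inf_x\partial_x^2\Phi_{\mu_P}(\cdot,q)<\sup_x\partial_x^2\Phi_{\mu_P}(\cdot,q)$ is needed at $q=c_j$ (it is exactly what makes $F_j'$ non-constant in the proof of Theorem \ref{sec2:prop3}), not only at $q=1$; your argument via $\mathrm{sech}^2$ at the terminal time does not cover this, though it is easily repaired since $\partial_x^2\Phi\geq0$ and $\partial_x\Phi\leq1$ force $\partial_x^2\Phi(x,q)\to0$ along $x\to\infty$ while $\partial_x^2\Phi(\cdot,q)$ cannot vanish identically.

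The genuine gap is in (d). The directional-derivative formula you write, with integrand $q-\e\bigl(\partial_x\Phi_{\mu_P}(h+Z_q,q)\bigr)^2$ and $Z_q\sim N(0,\xi'(q))$, is \emph{not} the Gateaux derivative of the Parisi functional for $q$ in the interior of $\mbox{supp}\,\mu_P$: the correct expression evaluates $\partial_x\Phi_{\mu_P}$ along the tilted cavity field (the reweighting by $W_1\cdots W_{p-1}$ that appears in Lemma \ref{sec3:lem2}, equivalently the solution of the Parisi SDE with drift $\xi''(q)\mu_P([0,q])\partial_x\Phi_{\mu_P}$), and the weights are not $\approx1$ once $\mu_P([0,q])>0$. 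The two expressions coincide only for $q\leq c$, because the drift vanishes below the support --- which is why the conclusion at $q=c$ happens to be true, but your step ``this forces the integrand to vanish on $\mbox{supp}\,\mu_P$'' is deduced from a formula that is wrong on $\mbox{supp}\,\mu_P\setminus\{c\}$. Moreover, differentiability of the extended functional at $\mu_P$ cannot be invoked as a black box here; the paper's actual mechanism (carried out explicitly in its Lemmas \ref{sec3:lem2}--\ref{sec3:lem4} for the $h=0$ analogue $(\ref{sec3:eq3})$--$(\ref{sec3:eq4})$, and in Lemma 12 of \cite{Chen11} for $\e h^2\neq0$) is to perturb the \emph{discrete} near-minimizers $\mu_n\in\mbox{MIN}(\varepsilon_n)$: the stationarity in $q_r$ gives the weighted identity, the Taylor expansion of $f_r(u)$ gives the weighted second-order inequality with an explicit $O(\varepsilon_n^{1/6})$ error, and only after removing the weights via Lemma \ref{sec3:lem4} does one pass to the limit using part (a). Your sketch identifies the right optimality conditions, but as written neither the derivative formula nor the limiting procedure is correct.
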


\begin{proof}
The proofs for $j=0,1$ in the first statement are given in the proof of Theorem 3.2 \cite{Tal07}. One may see that indeed a similar but much more tedious argument as Theorem 3.2 \cite{Tal07} will also yield the cases for $j=2,3.$ The second and third assertions are concluded from the statements and proofs of Lemma 14.7.16 \cite{Tal11} and Lemma 2 \cite{Chen11}. Finally, the fourth statement is exactly Lemma 12 \cite{Chen11}.
\end{proof}

Recall $\xi_{1,1},$ $\xi_{2,2}$, and $\xi_{1,2}$ from (\ref{add:eq2}). Let $v_1,v_2$ be two real numbers satisfying $0<v_1,v_2\leq 1.$ Observe that from the fact $t_p\in[0,1]$ and then the Cauchy-Schwartz inequality, 
\begin{align}
\begin{split}
\label{sec2.2:eq2}
\xi_{1,2}'(\sqrt{v_1v_2})&\leq \sum_{p\geq 1}{2p}\beta_{1,p}\beta_{2,p}(\sqrt{v_1v_2})^{2p-1}\\
&=\sum_{p\geq 1}\sqrt{2p}\beta_{1,p}v_1^{p-1/2}\sqrt{2p}\beta_{2,p}v_2^{p-1/2}\\
&\leq \biggl(\sum_{p\geq 1}2p\beta_{1,p}^2 v_1^{2p-1}\biggr)^{1/2}\biggl(\sum_{p\geq 1}2p\beta_{2,p}^2v_2^{2p-1}\biggr)^{1/2}\\
&=\xi_{1,1}'(v_1)^{1/2}\xi_{2,2}'(v_2)^{1/2}.
\end{split}
\end{align}
Let us note that the two sequences $\vec{\beta}_1$ and $\vec{\beta}_2$ in the definitions of $\xi_{1,1}$ and $\xi_{2,2}$ are nontrivial. This implies $\xi_{1,1}'(v_1)$, $\xi_{1,1}''(v_1)$,  $\xi_{2,2}'(v_2),$ $\xi_{2,2}''(v_2)>0$. Set a nonnegative function $$
\eta(u)=\frac{\xi_{1,2}'(|u|)}{\xi'_{1,1}(v_1)^{1/2}\xi_{2,2}'(v_2)^{1/2}}.
$$
From \eqref{sec2.2:eq2}, $\eta(u)\leq\eta(\sqrt{v_1v_2}) \leq 1$ for $|u|\leq \sqrt{v_1v_2}.$ This allows us to define jointly Gaussian r.v.
\begin{align}
\begin{split}\label{sec2.2:eq1}
\chi^1&=\chi^1(u):=\sqrt{\xi_{1,1}'(v_1)}(\sqrt{\eta(u)}w+\sqrt{1-\eta(u)}w_1),\\
\chi^2&=\chi^2(u):=\sqrt{\xi_{2,2}'(v_2)}(\mbox{sign}(u)\sqrt{\eta(u)}w+\sqrt{1-\eta(u)}w_2)
\end{split}
\end{align} 
for $|u|\leq \sqrt{v_1v_2},$ where $w,w_1,w_2$ are i.i.d. standard Gaussian. Their covariance can be computed as $\e (\chi^1)^2=\xi_{1,1}'(v_1),$ $\e(\chi^2)^2=\xi_{2,2}'(v_2)$, and $\e \chi^1\chi^2=\xi_{1,2}'(u).$ Such construction of $\chi^1$ and $\chi^2$ will be used several times throughout the paper. Below is our fixed point theorem.

\begin{theorem} \label{sec2:prop3} Suppose that $0<c_1,c_2\leq 1$. Consider the r.v. $\chi^1,\chi^2$ as defined by letting $v_1=c_1,v_2=c_2$ in \eqref{sec2.2:eq1}. Let $h^1,h^2$ be any two r.v. independent of $\chi^1,\chi^2.$ Suppose that $F_1,F_2$ are real-valued functions on $\mathbb{R}$ with $\|F_j\|_\infty,\|F_j''\|_\infty<\infty$, and $0\leq\inf F_j'<\|F_j'\|_\infty<\infty$ for $j=1,2.$ Define 
\begin{align}\label{sec2:prop3:eq1}
F(u)=\e F_1(h^1+\chi^1)F_2(h^2+\chi^2)
\end{align}
for $|u|\leq \sqrt{c_1c_2}$. If
\begin{align}
\begin{split}
\label{sec2:prop3:eq3}
\e F_j(h^j+\chi^j)^2&=c_j,
\end{split}\\
\begin{split}
\label{sec2:prop3:eq4}
\xi_{j,j}''(c_j)\e F_j'(h^j+\chi^j)^2&\leq 1,
\end{split}
\end{align}
then $F$ maps $[-\sqrt{c_1c_2},\sqrt{c_1c_2}]$ to itself and $F$ has a unique fixed point $u_f.$
\end{theorem}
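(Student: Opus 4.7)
The plan is to establish the range statement by a pointwise Cauchy--Schwarz, derive a sharp derivative estimate $F'(u)\leq 1$ on $[-\sqrt{c_1c_2},\sqrt{c_1c_2}]$ by combining Gaussian integration by parts with two Cauchy--Schwarz steps, and then deduce existence and uniqueness of the fixed point from the intermediate value theorem together with strict monotonicity of $G(u):=F(u)-u$.

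\textbf{Range and regularity.} The first step is quick: applying the Cauchy--Schwarz inequality pointwise in the disorders and then using hypothesis \eqref{sec2:prop3:eq3} gives
$$|F(u)|\leq \bigl(\e F_1(h^1+\chi^1)^2\bigr)^{1/2}\bigl(\e F_2(h^2+\chi^2)^2\bigr)^{1/2}=\sqrt{c_1c_2}.$$
Continuity (indeed $C^1$) of $F$ on $[-\sqrt{c_1c_2},\sqrt{c_1c_2}]$ follows from the boundedness of $F_1,F_2,F_1',F_2'$ together with the smooth dependence of the joint law of $(\chi^1,\chi^2)$ on $u$ through $\eta(u)$.

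\textbf{Derivative identity.} Since the pair $(\chi^1,\chi^2)$ is jointly Gaussian with marginal variances $\xi_{1,1}'(c_1),\xi_{2,2}'(c_2)$ independent of $u$ and covariance $\e\chi^1\chi^2=\xi_{1,2}'(u)$, the Price/Gaussian integration-by-parts formula (applied after conditioning on $h^1,h^2$) yields
$$F'(u)=\xi_{1,2}''(u)\,\e F_1'(h^1+\chi^1)\,F_2'(h^2+\chi^2).$$
Because $F_j'\geq 0$ and $\xi_{1,2}''\geq 0$, this already shows $F$ is non-decreasing.

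\textbf{Bounding $F'(u)\leq 1$.} Apply Cauchy--Schwarz to the expectation and invoke \eqref{sec2:prop3:eq4} to get
$$F'(u)\leq \frac{\xi_{1,2}''(u)}{\sqrt{\xi_{1,1}''(c_1)\,\xi_{2,2}''(c_2)}}.$$
The numerator is controlled exactly as in \eqref{sec2.2:eq2}, but one derivative higher: using $t_p\in[0,1]$, the bound $|u|^{2p-2}\leq (c_1c_2)^{p-1}$ for $|u|\leq\sqrt{c_1c_2}$, and Cauchy--Schwarz on the sequences $(\sqrt{2p(2p-1)}\,\beta_{1,p}\,c_1^{p-1})_p$ and $(\sqrt{2p(2p-1)}\,\beta_{2,p}\,c_2^{p-1})_p$ yields $\xi_{1,2}''(u)\leq \sqrt{\xi_{1,1}''(c_1)\xi_{2,2}''(c_2)}$, and hence $F'(u)\leq 1$ throughout the interval.

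\textbf{Existence and uniqueness.} Setting $G(u)=F(u)-u$, the range bound forces $G(-\sqrt{c_1c_2})\geq 0\geq G(\sqrt{c_1c_2})$, so by the intermediate value theorem $G$ has a zero $u_f$. For uniqueness, suppose $u_1<u_2$ are both fixed points; then $\int_{u_1}^{u_2}F'(u)\,du=u_2-u_1$ together with $F'\leq 1$ and the continuity of $F'$ would force $F'\equiv 1$ on $[u_1,u_2]$. Tracing this through the chain of inequalities from the previous paragraph, equality on an interval forces (i) $\xi_{1,2}''$ to be constant on that interval, hence $\beta_{1,p}\beta_{2,p}t_p=0$ for every $p\geq 2$; (ii) equality in \eqref{sec2:prop3:eq4}; and (iii) $F_1'(h^1+\chi^1(u))$ and $F_2'(h^2+\chi^2(u))$ to be almost surely proportional for every $u$ in the interval. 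The main obstacle -- and the one technical point that deserves the most care -- is step (iii): at any $u$ with $|u|<\sqrt{c_1c_2}$ the joint Gaussian $(\chi^1(u),\chi^2(u))$ has correlation strictly less than one, so almost-sure proportionality of $F_1'(h^1+\chi^1(u))$ and $F_2'(h^2+\chi^2(u))$ is incompatible with the non-degeneracy assumption $0\leq\inf F_j'<\|F_j'\|_\infty$. This rules out multiple fixed points and completes the argument.
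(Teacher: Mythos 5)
Your proposal is correct and follows essentially the same route as the paper: the pointwise Cauchy--Schwarz for the range bound, the Gaussian integration-by-parts identity $F'(u)=\xi_{1,2}''(u)\,\e F_1'(h^1+\chi^1)F_2'(h^2+\chi^2)$, the estimate $\xi_{1,2}''(u)\leq\xi_{1,1}''(c_1)^{1/2}\xi_{2,2}''(c_2)^{1/2}$, and the observation that equality would force $F_1'(h^1+\chi^1)$ and $F_2'(h^2+\chi^2)$ to coincide up to a constant, which the full support of the non-degenerate Gaussian pair (correlation strictly below one for $|u|<\sqrt{c_1c_2}$) and the non-constancy of $F_j'$ rule out. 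The only cosmetic difference is that the paper establishes the strict inequality $|F'(u)|<1$ at every interior point and then invokes the mean value theorem, whereas you prove $F'\leq 1$ and localize the equality case via the integral; the substance is identical.
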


\begin{proof} From the Cauchy-Schwartz inequality and \eqref{sec2:prop3:eq3},
\begin{align*}
|F(u)|\leq (\e F_1(h^1+\chi^1)^2)^{1/2}(\e F_2(h^2+\chi^2)^2)^{1/2}\leq \sqrt{c_1c_2},
\end{align*}
which gives the first assertion and this guarantees the existence of a fixed point by the intermediate value theorem. To prove the uniqueness, it suffices to prove that $|F'(u)|<1$ for all $|u|< \sqrt{c_1c_2}.$ Indeed, if this is true and $F$ has two distinct fixed points $u_f$ and $u_f',$ then the mean value theorem yields a contradiction,
$$
|u_f-u_f'|=|F(u_f)-F(u_f')|=|F'(u_0)||u_f-u_f'|<|u_f-u_f'|
$$
for some $|u_0|<\sqrt{c_1c_2}.$ Note that the first and second partial derivatives of $F_1$ and $F_2$ are uniformly bounded. From \eqref{sec2.2:eq1}, the Gaussian integration by parts leads to $$
F'(u)=\xi_{1,2}''(u)\e F_1'(h^1+\chi^1)F_2'(h^2+\chi^2),\,|u|<\sqrt{c_1c_2}.
$$
Also note that $\xi_{1,2}''$ is an even. From the Cauchy-Schwartz inequality, a similar argument as \eqref{sec2.2:eq2} gives
$
\xi_{1,2}''(u)
\leq \xi_{1,1}''(c_1)^{1/2}\xi_{2,2}''(c_2)^{1/2}
$ for $|u|<\sqrt{c_1c_2}.$
This and $(\ref{sec2:prop3:eq4})$ together with another application of the Cauchy-Schwartz inequality imply that for all $|u|<\sqrt{c_1c_2},$
\begin{align}\label{sec2:prop3:proof:eq1}
|F'(u)|&=\frac{\xi_{1,2}''(u)}{\xi_{1,1}''(c_1)^{1/2}\xi_{2,2}''(c_2)^{1/2}}\e Z_1Z_2
\leq\e Z_1Z_2\leq (\e Z_1^2)^{1/2}(\e Z_2^2)^{1/2}\leq 1,
\end{align}  
where 
$$
Z_1:=\xi_{1,1}''(c_1)^{1/2}F_1'(h^1+\chi^1)\,\,\mbox{and}\,\,Z_2:=\xi_{2,2}''(c_2)^{1/2}F_2'(h^2+\chi^2).
$$
If $|F'(u_0)|=1$ for some $|u_0|<\sqrt{c_1c_2},$ then $(\ref{sec2:prop3:proof:eq1})$ implies $\e Z_1Z_2=1.$ Thus, from $(\ref{sec2:prop3:eq4}),$ 
$$
\e(Z_1-Z_2)^2=\e Z_1^2+\e Z_2^2-2\e Z_1Z_2\leq 2-2=0
$$
and so $Z_1=Z_2.$ Now, on the one hand, since $\eta(u_0)<1$ and $\xi_{1,1}'(c_1),$ $\xi_{2,2}'(c_2)>0,$ we have $\p(h^1+\chi^1\in O_1,h^2+\chi^2\in O_2)>0$ for all open subsets $O_1,O_2.$ On the other hand, from $\inf F_j'<\sup F_j',$ $F_j'$ is not a constant function. These two facts together with $\xi_{1,1}''(c_1),$ $\xi_{2,2}''(c_2)>0$ imply that $Z_1\neq Z_2$ has nonzero probability, a contradiction. So $|F'(u)|<1$ for all $|u|<\sqrt{c_1c_2}$ and this completes our proof.
\end{proof}

\subsection{Proof of Theorem \ref{sec3:thm1}} 

To motivate our approach, we will first consider the case that $\mu_P$ is a replica symmetry solution to the Parisi formula, i.e., $\mu_P(\{c\})=1$ for some $0\leq c\leq 1.$  We then continue to study the case that $\mu_P$ is a replica symmetry breaking solution, i.e., $\mu_P$ is nontrivial. As one shall see, the argument for the second case is exactly the same as that presented in the first case. Only now added complications resulting from the more complicated structure of $\mu_P$ has to be treated subtly.

\smallskip
\smallskip
 
\begin{Proof of theorem} {\bf\ref{sec3:thm1} for replica symmetric $\mu_P$:}
Assume that $\mu_P(\{c\})=1$ for some $c\in[0,1]$. Suppose on the contrary that $0<c\leq 1.$  Recall the Parisi functional $\mathcal{P}_0(\mathbf{m},\mathbf{q})$ from $(\ref{intro:eq5})$, where $\mathbf{m}=(0,1)$ and $\mathbf{q}=(0,q,1)$ for $0\leq q\leq 1.$ First, observe that 
\begin{align*}
\frac{d\mathcal{P}_0}{dq}(\mathbf{m},\mathbf{q})=\frac{1}{2}\xi''(q)(q-\e\tanh^2(w\sqrt{\xi'(q)}))
\end{align*}
and this function is $>0$ at $q=1,$ where $w$ is standard Gaussian. We conclude that $c<1$ and thus, $c$ satisfies
\begin{align}
\label{sec3:eq1}
\e \tanh^2 Y&=c,
\end{align}
where $Y:=w\xi'(c)^{1/2}$. Next, recall a well-known result of Toninelli \cite{Ton02}, which says that above the Almeida-Thouless line, i.e., $\xi''(c)\e\cosh^{-4}Y>1,$ the Parisi measure could not be replica symmetric. Let us notice that although Toninelli's original theorem is dedicated to the SK model, one may find that indeed a similar argument as \cite{Ton02} or Section 13.3 \cite{Tal11} will yield Toninelli's theorem in the mixed even-spin model. Thus, we obtain
\begin{align}
\begin{split}\label{sec3:eq2}
\xi''(c)\e\frac{1}{\cosh^4Y}&\leq 1.
\end{split}
\end{align} 
Now consider the PDE solution $\Phi_{\mu_P}$ corresponding to $\mu_P$ from (\ref{intro:eq14}),
\begin{align*}
\Phi_{\mu_{P}}(x,c)&=\log\cosh x+\frac{1}{2}(\xi'(1)-\xi'(c)).
\end{align*}
Let $\xi_{1,1}=\xi_{2,2}=\xi_{1,2}=\xi,$ $F_1=F_2=\frac{\partial \Phi_{\mu_P}}{\partial x}(\cdot,c)=\tanh$, $h^1=h^2=0$, and $c_1=c_2=c$ in Theorem \ref{sec2:prop3}. From $(\ref{sec3:eq1})$ and $(\ref{sec3:eq2})$, $F_1$ and $F_2$ obviously satisfy $(\ref{sec2:prop3:eq3})$ and $(\ref{sec2:prop3:eq4}).$ Therefore, the function $F$ defined from $(\ref{sec2:prop3:eq1})$ must have a unique solution. However, since $F_1$ and $F_2$ are odd functions, one may see clearly that $0$ and $-c$ are also fixed points of $F$, a contradiction. So $c=0$ and this completes the argument of the case that $\mu_P$ is replica symmetric.
\end{Proof of theorem}

\begin{Proof of theorem} {\bf\ref{sec3:thm1} for replica symmetric breaking $\mu_P$:} Assume now that $\mu_P$ is nontrivial and $c=\min\mbox{supp}\mu_P> 0.$ Note that since $\mu_P$ is not replica symmetric, we can further assume $0<c<1$. Recall $\Phi_{\mu_P}$ from $(\ref{intro:eq16}).$ One would like to expect that similar results as $(\ref{sec3:eq1})$ and $(\ref{sec3:eq2})$ also hold for $\Phi_{\mu_P}$ such that one can apply Theorem \ref{sec2:prop3} to conclude Theorem $\ref{sec3:thm1}.$ It turns out that under the assumption $c>0,$ we have the following,
\begin{align}
\begin{split}\label{sec3:eq3}
\e\left(\frac{\partial\Phi_{\mu_P}}{\partial x}(\chi,c)\right)^2&=c,
\end{split}\\
\begin{split}
\label{sec3:eq4}
\xi''(c)\e\left(\frac{\partial^2\Phi_{\mu_P}}{\partial x^2}(\chi,c)\right)^2&\leq 1,
\end{split}
\end{align}
where $\chi$ denotes the centered Gaussian r.v. with $\e\chi^2=\xi'(c).$ Suppose for the moment that $(\ref{sec3:eq3})$ and $(\ref{sec3:eq4})$ hold (They will be verified below).  From Theorem $\ref{sec2:prop3}$ using $\xi_{1,1}=\xi_{2,2}=\xi_{1,2}=\xi,$ $F_1=F_2=\frac{\partial\Phi_{\mu_P}}{\partial x}(\cdot,c)$, $h^1=h^2=0,$ and $c_1=c_2=c$, the function $F$ defined at $(\ref{sec2:prop3:eq1})$ has a unique fixed point, but this contradicts the fact that $0$ and $-c$ are also the fixed points of $F$ since $\frac{\partial\Phi_{\mu_{P}}}{\partial x}(\cdot,c)$ is odd from $(c)$ in Proposition \ref{add:prop1}. Therefore, $c$ has to be zero, which finishes the proof of Theorem \ref{sec3:thm1}.
\end{Proof of theorem}

\smallskip

The rest of this subsection is devoted to the derivation of $(\ref{sec3:eq3})$ and $(\ref{sec3:eq4})$ assuming $c>0$. The basic idea is to study the local stability of the Parisi solution $\mu_P$ in the Parisi formula as performed in (\ref{sec3:eq1}) and in Chapter 14 \cite{Tal11}. Suppose that $(k,\mathbf{m},\mathbf{q})$ is a triplet corresponding to a measure $\mu.$ Recall $\mathcal{P}_k(\mathbf{m},\mathbf{q})$ from $(\ref{intro:eq5})$. Since we will be differentiating this quantity with respect to $q_p$'s and $m_p$'s and the definition of $X_0$ in $\mathcal{P}_k(\mathbf{m},\mathbf{q})$ involves an iteration scheme, for convenience, we define a sequence of functions $(A_{p})_{0\leq p\leq k+2}$ as follows. Let $(z_p)_{0\leq p\leq k+1}$ be independent centered Gaussian with $\e z_p^2=\xi'(q_{p+1})-\xi'(q_p).$ Starting from $A_{k+2}(x)=\log\cosh x$, we define decreasingly for $0\leq p\leq k+1$,
\begin{align}
\label{add:eq11}
A_p(x)=\frac{1}{m_p}\log \e\exp m_p A_{p+1}(x+z_p),
\end{align}
where we define $A_p(x)=\e A_{p+1}(x+z_p)$ whenever $m_p=0.$ Note that $X_0=\e A_0(h)$. Let $\Phi_{\mu}$ be the PDE solution $(\ref{intro:eq14})$ corresponding to $\mu.$ Easy to see $\Phi_{\mu}(x,1)=A_{k+2}(x)$ and more importantly, a direction computation using Gaussian integration by parts implies that $\Phi_\mu$ can be represented in terms of $(A_p)_{0\leq p\leq k+2},$
\begin{align}
\label{add:eq9}
\Phi_{\mu}(x,q)&=\frac{1}{m_p}\log \e\exp m_pA_{p+1}\left(x+z\sqrt{\xi'(q_{p+1})-\xi'(q)}\right)
\end{align}
whenever $q_p\leq q<q_{p+1}$ for some $0\leq p\leq k+1$, where $z$ is standard Gaussian. In particular, for $0\leq p\leq k+2,$
\begin{align}
\label{add:eq10}
\Phi_{\mu}(x,q_p)=A_p(x).
\end{align}
Set $(\zeta_p)_{1\leq p\leq k+2}$ by letting $$
\zeta_p=\sum_{0\leq n<p}z_p
$$
and set $(W_p)_{1\leq p\leq k+1}$ by $$
W_p=\exp m_{p}(A_{p+1}(\zeta_{p+1})-A_{p}(\zeta_{p})).
$$ 
Now suppose that $k\geq 1$ and $\mu\in\mbox{MIN}(\varepsilon)$ for some $\varepsilon>0$. Let $0\leq s\leq k+1$ satisfy $q_s\leq c<q_{s+1}$. A study of the local stability of $\mu$ in $\mathcal{P}_k$ yields the lemma below.  

\begin{lemma}
\label{sec3:lem2} Suppose that there exists some $0<c'<c$ such that $c'>\varepsilon^{1/6}.$ If $q_s>c'$, then 
\begin{align}
\begin{split}\label{sec3:lem2:eq1}
\e W_{1}\cdots W_{s-1}A_{s}'(\zeta_{s})^2&=q_{s},
\end{split}
\\
\begin{split}\label{sec3:lem2:eq2}
\xi''(q_s)\e W_{1}\cdots W_{s-1}A_{s}''(\zeta_{s})^2&\leq 1+M\varepsilon^{1/6},
\end{split}
\end{align}
where $M>0$ depends only on $\xi$ and $c'.$ 
If $q_{s+1}<1$, then 
\begin{align}
\begin{split}\label{sec3:lem2:eq3}
\e W_{1}\cdots W_{s}A_{s+1}'(\zeta_{s+1})^2&=q_{s+1},
\end{split}
\\
\begin{split}\label{sec3:lem2:eq4}
\xi''(q_{s+1})\e W_{1}\cdots W_{s}A_{s+1}''(\zeta_{s+1})^2&\leq 1+M\varepsilon^{1/6},
\end{split}
\end{align}
where $M>0$ depends only on $\xi$ and $c.$
\end{lemma}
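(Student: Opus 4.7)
The plan is to combine two classical ingredients of Parisi formula analysis: exact first-order optimality at the level-$k$ minimum gives the two equalities, and a controlled comparison with a level-$(k+1)$ perturbation gives the two inequalities. The condition $\mu\in\mathrm{MIN}(\varepsilon)$ splits exactly this way: $(\mathbf{m},\mathbf{q})$ is an \emph{exact} minimizer of $\mathcal{P}_k$ over all $\mathbf{m},\mathbf{q}$ satisfying the strict monotonicity of Definition \ref{def1}, while it is only within $\varepsilon$ of the global infimum $\mathcal{P}(\xi,h)$, and it is this gap that will govern the $\varepsilon^{1/6}$ error.

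For the two equalities (\ref{sec3:lem2:eq1}) and (\ref{sec3:lem2:eq3}), I would use the hypothesis $q_s>c'>0$ (resp.\ $q_{s+1}<1$) together with the strict monotonicity of the $q_p$'s to ensure that $q_s$ (resp.\ $q_{s+1}$) can be perturbed freely in both directions without violating any constraint, so $\partial_{q_s}\mathcal{P}_k(\mathbf{m},\mathbf{q})=0$ exactly. The derivative is computed by differentiating the explicit formula (\ref{intro:eq5}). The contribution of the $\theta$-term is $\tfrac12(m_{s-1}-m_s)\,q_s\xi''(q_s)$. For the contribution of $X_0=\mathbb{E}A_0(h)$, I would differentiate (\ref{add:eq11}) step by step, noting that $q_s$ enters only through $\mathrm{Var}(z_{s-1})=\xi'(q_s)-\xi'(q_{s-1})$ and $\mathrm{Var}(z_s)=\xi'(q_{s+1})-\xi'(q_s)$. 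Applying Gaussian integration by parts to move the derivatives onto $A_s'$ and using the identification of the tilted expectations with the weights $W_1\cdots W_{s-1}$ (which come from the logarithmic iterations of $\tfrac{1}{m_p}\log\mathbb{E}\exp m_p(\cdot)$), I expect the $X_0$-contribution to be $\tfrac12(m_s-m_{s-1})\xi''(q_s)\,\mathbb{E}W_1\cdots W_{s-1}A_s'(\zeta_s)^2$. Equating to zero and dividing by $\tfrac12(m_s-m_{s-1})\xi''(q_s)>0$ gives (\ref{sec3:lem2:eq1}); the same computation at $q_{s+1}$ yields (\ref{sec3:lem2:eq3}).

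For the inequalities (\ref{sec3:lem2:eq2}) and (\ref{sec3:lem2:eq4}), I would compare $\mathcal{P}_k(\mathbf{m},\mathbf{q})$ with a one-level refinement $\mu_\delta$ built by inserting an extra level near $q_s$ (say, splitting $q_s$ into $q_s-\delta,q_s+\delta$ with an interpolating mass $\tilde{m}\in(m_{s-1},m_s)$). The bound $\mathcal{P}_{k+1}(\mu_\delta)\geq\mathcal{P}(\xi,h)\geq\mathcal{P}_k(\mathbf{m},\mathbf{q})-\varepsilon$ is the only place where the $\varepsilon$ enters. I would Taylor-expand the difference $\mathcal{P}_{k+1}(\mu_\delta)-\mathcal{P}_k(\mathbf{m},\mathbf{q})$ in $\delta$: the linear term vanishes by the first-order optimality just established, and the quadratic term, again evaluated by Gaussian integration by parts (now producing second derivatives of $A_s$), has the form $C\,\delta^2\bigl[\,1-\xi''(q_s)\,\mathbb{E}W_1\cdots W_{s-1}A_s''(\zeta_s)^2\,\bigr]$ for some explicit positive constant $C$ depending on $\tilde{m}-m_{s-1}$, $m_s-\tilde{m}$, and $\xi''(q_s)$. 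The remainder is of order $\delta^3\|A_s^{(3)}\|_\infty$, which is uniformly controlled via the bounds in Proposition \ref{add:prop1}(b) and the hypothesis $q_s>c'>\varepsilon^{1/6}$ (which keeps $\xi''(q_s)$ and the weights away from degeneracy). Choosing $\delta=\varepsilon^{\alpha}$ and balancing the $\varepsilon$-deficit against the cubic remainder produces an error of order $\varepsilon^{1/6}$ on the inequality, with the constant $M$ depending only on $\xi$ and $c'$ (resp.\ $c$). The analogous construction at $q_{s+1}$ yields (\ref{sec3:lem2:eq4}).

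The main obstacle I expect is bookkeeping in the second-order expansion: the nested logarithmic structure of the $A_p$'s means that differentiating twice produces a web of cross-terms involving $A_s'$, $A_s''$ and $A_s'''$ integrated against products of the weights $W_p$, and one must carefully reorganize them to isolate the clean quadratic form $\xi''(q_s)\,\mathbb{E}W_1\cdots W_{s-1}A_s''(\zeta_s)^2$. The derivative bounds from Proposition \ref{add:prop1}(b) together with the lower bound $q_s>\varepsilon^{1/6}$ on the spacing should give enough quantitative control to make the remainder estimates go through uniformly; without the assumption $q_s>c'$ the argument would collapse because $\xi''(q_s)$ could vanish and the comparison constant $C$ could blow up.
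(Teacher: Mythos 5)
Your treatment of the two equalities is essentially the paper's argument: since $(k,\mathbf{m},\mathbf{q})\in\mbox{MIN}(\varepsilon)$ is an exact minimizer of $\mathcal{P}_k$ over $(\mathbf{m},\mathbf{q})$, and the hypotheses $q_s>c'>0$, $q_{s+1}<1$ make the corresponding atoms interior, one sets $\frac{\partial}{\partial q_r}\mathcal{P}_k(\mathbf{m},\mathbf{q})=\frac{1}{2}(m_r-m_{r-1})\xi''(q_r)(q_r-\e W_1\cdots W_{r-1}A_r'(\zeta_r)^2)$ equal to zero. (Your two displayed contributions each carry the opposite sign from the correct one, but consistently so, which does not affect the conclusion.)

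For the inequalities, however, your perturbation scheme has two genuine gaps. First, your quadratic coefficient $C$ is proportional to the mass gaps $\tilde m-m_{s-1}$ and $m_s-\tilde m$, hence to $m_s-m_{s-1}$, which for triplets in $\mbox{MIN}(\varepsilon)$ can be arbitrarily small; the bound $1-\xi''(q_s)\e W_1\cdots W_{s-1}A_s''(\zeta_s)^2\geq -\varepsilon/(C\delta^2)-M\delta/C$ then degenerates, and the lemma must hold uniformly over all such triplets. The paper sidesteps this by first differentiating in the \emph{mass} variable: it sets $f_r(u)=\frac{\partial}{\partial m}\mathcal{P}_{k+1}(\mathbf{m}(m),\mathbf{q}(u))|_{m=m_{r-1}}$, for which near-optimality gives $f_r(u)\geq -M\sqrt{\varepsilon}$ with $M$ depending only on $\xi$, and whose second $u$-derivative at $q_r$ is exactly $-\frac{1}{2}\xi''(q_r)(\xi''(q_r)\e W_1\cdots W_{r-1}A_r''(\zeta_r)^2-1)$ \emph{with no mass-gap prefactor}; Taylor expansion of $f_r$ around $q_r$ then closes the argument. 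Second, your two-sided split $q_s\pm\delta$ is only admissible when $\delta\leq\min(q_s-q_{s-1},q_{s+1}-q_s)$, and these spacings can be smaller than any fixed power of $\varepsilon$; you give no fallback. The paper handles exactly this: if $q_s-\varepsilon^{1/6}<q_{s-1}$ it instead evaluates the Taylor expansion at $u=q_{s-1}$, using $f_s(q_{s-1})=0$ and $q_s-q_{s-1}\leq\varepsilon^{1/6}$. This is also the true role of the hypothesis $c'>\varepsilon^{1/6}$ (it forces $s\geq 2$ in that boundary case, so that $f_s(q_{s-1})=0$ is available), not, as you suggest, to keep $\xi''(q_s)$ or the weights nondegenerate — that is already guaranteed by $q_s>c'$ alone. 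Without addressing these two points your argument for \eqref{sec3:lem2:eq2} and \eqref{sec3:lem2:eq4} does not go through as written.
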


\begin{proof} To obtain $(\ref{sec3:lem2:eq1})$ and $(\ref{sec3:lem2:eq3})$, suppose for the moment that one thinks of $\mathcal{P}_k(\mathbf{m},\mathbf{q})$ as a function defined on the space of all vectors $(m_p)_{0\leq p\leq k+1}$ and $(q_p)_{0\leq p\leq k+2}$ satisfying
\begin{align*}
m_0&=0<m_1<m_2<\cdots<m_{k}<m_{k+1}=1,\\
q_0&=0\leq q_1<q_2<\ldots<q_{k+1}<q_{k+2}=1.
\end{align*}
If $q_1=0,$ a direct differentiation of $\mathcal{P}_k(\mathbf{m},\mathbf{q})$ with respect to $q_r$ for $2\leq r\leq k+1$ implies
\begin{align}\label{sec3:lem2:proof:eq1}
\frac{\partial}{\partial q_r}\mathcal{P}_{k}(\mathbf{m},\mathbf{q})&=\frac{1}{2}(m_r-m_{r-1})\xi''(q_r)(-\e W_1\cdots W_{r-1}A_r'(\zeta_r)^2+q_r);
\end{align}
if $q_1>0,$ then $(\ref{sec3:lem2:proof:eq1})$ also holds for $r=1.$ For the detailed computation, one may refer to Proposition 14.7.5 in \cite{Tal11}. Since $(k,\mathbf{m},\mathbf{q})\in\mbox{MIN}(\varepsilon)$, it implies that $\frac{\partial}{\partial q_r}\mathcal{P}_{k}(\mathbf{m},\mathbf{q})=0$ for either $r\geq 2$ or $r=1$ with $q_1>0.$ Consequently,
\begin{align}\label{sec3:lem2:proof:eq6}
&\e W_1\cdots W_{r-1}A_r'(\zeta_r)^2=q_r
\end{align}
for either $r\geq 2$ or $r=1$ with $q_1>0.$ In particular, if the condition $q_s>c'$ holds, then $1\leq s\leq k+1$ and so $(\ref{sec3:lem2:eq1})$ holds from $(\ref{sec3:lem2:proof:eq6})$ with $r=s$; if $q_{s+1}<1,$ then $1\leq s+1\leq k+1$ and since $q_{s+1}>c$, using $(\ref{sec3:lem2:proof:eq6})$ with $r=s+1$ implies $(\ref{sec3:lem2:eq3}).$

\smallskip

For $(\ref{sec3:lem2:eq2})$ and $(\ref{sec3:lem2:eq4})$, recall that the triplet $(k,\mathbf{m},\mathbf{q})\in\mbox{MIN}(\varepsilon)$ satisfies
\begin{align*}
m_0&=0<m_1<m_2<\cdots<m_k<m_{k+1}=1,\\
q_0&=0\leq q_1<q_2<\cdots<q_{k+1}<q_{k+2}=1.
\end{align*}
Consider new lists of sequences, for $1\leq r\leq k+1,$
\begin{align*}
\mathbf{m}(m)&=(0,m_1,\ldots,m_{r-1},m,m_r,\ldots,m_k,1),\\
\mathbf{q}(u)&=(0,q_1,\ldots,q_{r-1},u,q_r,\ldots,q_{k+1},1),
\end{align*} 
with $m_{r-1}\leq m\leq m_r$ and $q_{r-1}\leq u\leq q_r.$ For $1\leq r\leq k+1,$ we define
$$
f_r(u)=\left.\frac{\partial}{\partial m}\mathcal{P}_{k+1}(\mathbf{m}(m),\mathbf{q}(u))\right|_{m=m_{r-1}}
$$
Let us observe that from the definition of $f$ and $(k,\mathbf{m},\mathbf{q})\in\mbox{MIN}(\varepsilon),$
\begin{align}
\begin{split}\label{sec3:lem2:proof:eq9}
f_r(q_r)&=0,\,\forall 1\leq r\leq k+1,
\end{split}
\\
\begin{split}\label{sec3:lem2:proof:eq10}
f_{r}(q_{r-1})&=0,\,\forall 2\leq r\leq k+1.
\end{split}
\end{align}
Here comes the most critical part: there exists a constant $M>0$ depending only on $\xi$ such that for every $2\leq r\leq k+1,$
\begin{align}
\begin{split}
\label{sec3:lem2:proof:eq2}
f_r(u)\geq -M\sqrt{\varepsilon},\,\forall q_{r-1}\leq u\leq q_r,
\end{split}\\
\begin{split}
\label{sec3:lem2:proof:eq3}
&f_r'(q_r)=-\frac{1}{2}\xi''(q_r)\left(\e W_1\cdots W_{r-1}A_r'(\zeta_r)^2-q_r\right),
\end{split}\\
\begin{split}
\label{sec3:lem2:proof:eq4}
&f_r''(q_r)=-\frac{1}{2}\xi''(q_r)\left(\xi''(q_r)\e W_1\cdots W_{r-1}A_r''(\zeta_r)^2-1\right)
\end{split}
\end{align}
and if $q_1>0,$ these also hold for $r=1.$ In addition, for $1\leq r\leq k+1,$
\begin{align}
\begin{split}
\label{sec3:lem2:proof:eq5}
&\max_{q_{r-1}\leq m\leq q_r}|f_r'''(u)|\leq M.
\end{split}
\end{align}
The inequality $(\ref{sec3:lem2:proof:eq2})$ is mainly due to $(k,\mathbf{m},\mathbf{q})\in\mbox{MIN}(\varepsilon),$ while $(\ref{sec3:lem2:proof:eq3}),$ $(\ref{sec3:lem2:proof:eq4}),$ and $(\ref{sec3:lem2:proof:eq5})$ are based on a series of applications of the Gaussian integration by parts formula. Again, since they have been carried out in great detail in Section 14.7 \cite{Tal11}, we will omit the derivation of these results. Now, using Taylor's formula together with $(\ref{sec3:lem2:proof:eq6}),$ $(\ref{sec3:lem2:proof:eq9})$, $(\ref{sec3:lem2:proof:eq3})$, and $(\ref{sec3:lem2:proof:eq5})$, if either $r\geq 2$ or $r=1$ with $q_1>0,$ we have
\begin{align}\label{sec3:lem2:proof:eq7}
f_r(u)&\leq \frac{1}{2}(u-q_r)^2f_r''(q_r)+M|u-q_r|^3.
\end{align}
Suppose that $q_s>c'.$ Then $1\leq s\leq k+1$. First, we assume 
\begin{align}\label{sec3:lem2:proof:eq8}
u=q_s-\varepsilon^{1/6}\geq q_{s-1}.
\end{align}
Using $(\ref{sec3:lem2:proof:eq2})$ and $(\ref{sec3:lem2:proof:eq7})$ with $r=s$ yields $$
-M\sqrt{\varepsilon}\leq \frac{1}{2}\varepsilon^{1/3}f_s''(q_s)+M\sqrt{\varepsilon}
$$
and this implies from $(\ref{sec3:lem2:proof:eq4})$,
$$
-f_s''(q_s)=\frac{1}{2}\xi''(q_s)(\xi''(q_s)\e W_1\cdots W_{s-1} A_s''(\zeta_s)^2-1)\leq 4M\varepsilon^{1/6}.
$$
Since $\xi''(c')<\xi''(q_s),$ $(\ref{sec3:lem2:eq2})$ clearly follows. Assume now that $(\ref{sec3:lem2:proof:eq8})$ fails. Since $c'>\varepsilon^{1/6}$ and $q_0=0,$ we have $s\geq 2.$ Therefore, the use of $(\ref{sec3:lem2:proof:eq10})$, $(\ref{sec3:lem2:proof:eq4}),$ and $(\ref{sec3:lem2:proof:eq7})$ with $r=s$ and $u=q_{s-1}$ leads to
$$
-f_s''(q_s)=\frac{1}{2}\xi''(q_s)(\xi''(q_s)\e W_1\cdots W_{s-1} A_s''(\zeta_s)^2-1)\leq 2M(q_s-q_{s-1})\leq 2M\varepsilon^{1/6}.
$$
Again, $(\ref{sec3:lem2:eq2})$ holds from this inequality and using $\xi''(q_s)>\xi''(c').$ Note that $q_{s+1}>c>\varepsilon^{1/6}$ and that $q_{s+1}<1$ implies $1\leq s+1\leq k+1$. One may argue similarly as above to get $(\ref{sec3:lem2:eq4}).$
\end{proof}

\begin{lemma}
\label{sec3:lem3}
Let $\eta>0$ and $0<\delta<c$. Suppose that $l$ and $l'$ are fixed integers with $1\leq l<l'\leq k+1.$ If $m_p\leq \eta$ for every $1\leq p\leq l-1$, then
\begin{align}
\label{sec3:lem3:eq1}
\e|W_1W_2\cdots W_{l-1}-1|\leq M\eta.
\end{align}
If $c-\delta\leq q_p\leq q_{l'}$ for every $l\leq p\leq l',$ then
\begin{align}
\label{sec3:lem3:eq2}
\e W_1W_2\cdots W_{l-1}|W_lW_{l+1}\cdots W_{l'-1}-1|\leq M\sqrt{q_{l'}-c+\delta}.
\end{align}
Here, $M$ depends only on $\xi.$
\end{lemma}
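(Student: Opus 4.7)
The plan is to exploit the martingale structure of the partial products $V_r := W_1 W_2 \cdots W_r$. From the defining recursion~\eqref{add:eq11} a direct computation yields $\e[W_p \mid \mathcal{F}_p] = 1$ where $\mathcal{F}_p := \sigma(z_0,\ldots,z_{p-1})$, so each $V_r$ is a positive mean-one martingale. The central ingredient will be a deterministic pointwise bound on $\e[W_p^2 \mid \mathcal{F}_p]$: I would apply the Gaussian Poincar\'e inequality to the map $z \mapsto \exp\bigl(m_p A_{p+1}(\zeta_p + z)\bigr)$ and use $\|A_p'\|_\infty \leq 1$ (which descends through~\eqref{add:eq11} from $A_{k+2}'=\tanh$) to obtain
\[
\e\bigl[W_p^2 \,\big|\, \mathcal{F}_p\bigr] \leq \frac{1}{1 - m_p^2 \sigma_p^2}, \qquad \sigma_p^2 := \xi'(q_{p+1}) - \xi'(q_p),
\]
valid whenever $m_p^2 \sigma_p^2 < 1$. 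Iterating this pointwise bound yields $\e V_r^2 \leq \prod_{p=1}^r (1 - m_p^2 \sigma_p^2)^{-1}$.

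For~\eqref{sec3:lem3:eq1}, I would use Cauchy-Schwartz together with $\e V_{l-1} = 1$ to reduce matters to $\e V_{l-1}^2 - 1$. Under the assumption $m_p \leq \eta$ for $1 \leq p \leq l-1$, and provided $\eta$ is small enough that $m_p^2\sigma_p^2 \leq 1/2$ (otherwise the bound is trivial since $\e|V_{l-1}-1|\leq 2$), the inequality $\log(1-x)^{-1} \leq 2x$ and the telescoping identity $\sum_{p=0}^{k+1}\sigma_p^2 = \xi'(1)$ give
\[
\log \e V_{l-1}^2 \leq 2\sum_{p=1}^{l-1} m_p^2 \sigma_p^2 \leq 2\xi'(1)\eta^2,
\]
whence $\e|V_{l-1}-1| \leq (\e V_{l-1}^2 - 1)^{1/2} \leq M\eta$ with $M = M(\xi)$.

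For~\eqref{sec3:lem3:eq2}, I would apply Cauchy-Schwartz with weight $V_{l-1}$ and use $\e V_{l-1} = 1$ to reduce to $\e V_{l-1}(W_l\cdots W_{l'-1} - 1)^2$. Expanding the square and exploiting that both $V_{l-1}$ and $V_{l'-1}= V_{l-1} W_l\cdots W_{l'-1}$ are mean-one martingales gives
\[
\e V_{l-1}(W_l\cdots W_{l'-1} - 1)^2 = \e V_{l-1}\bigl(W_l\cdots W_{l'-1}\bigr)^2 - 1.
\]
Conditioning on $\mathcal{F}_l$ and iterating the conditional second-moment bound then bounds this by $\prod_{p=l}^{l'-1}(1 - m_p^2\sigma_p^2)^{-1} - 1$. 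The hypothesis $c - \delta \leq q_p \leq q_{l'}$ for $l \leq p \leq l'$ together with the mean value theorem now supplies the crucial telescoping estimate
\[
\sum_{p=l}^{l'-1} \sigma_p^2 \leq \xi''(1)(q_{l'} - q_l) \leq \xi''(1)(q_{l'} - c + \delta),
\]
so when $q_{l'}-c+\delta$ is small this product is at most $1 + C(q_{l'}-c+\delta)$ for $C = C(\xi)$. Taking square roots delivers the desired $M\sqrt{q_{l'}-c+\delta}$, while the regime where $q_{l'}-c+\delta$ is not small is absorbed into $M$ via the trivial bound $\leq 2$.

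The main obstacle I anticipate is establishing the deterministic conditional bound on $\e[W_p^2 \mid \mathcal{F}_p]$, which hinges on the uniform Lipschitz control $\|A_p'\|_\infty \leq 1$. This I would verify by backward induction on $p$: differentiating~\eqref{add:eq11} represents $A_p'$ as a tilted-measure expectation of $A_{p+1}'$, whose sup is controlled by that of $A_{p+1}'$, so the bound propagates down from $A_{k+2}'=\tanh'$. Once this Lipschitz control is in place the rest of the argument is a routine combination of Gaussian Poincar\'e with the telescoping structure of the $\sigma_p^2$.
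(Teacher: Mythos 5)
Your proof is correct. The paper gives no argument of its own here---it simply points to (14.468)--(14.469) in Talagrand's book---and your scheme (the mean-one martingale $V_r=W_1\cdots W_r$ coming from $\e[W_p\mid\mathcal{F}_p]=1$, a deterministic conditional second-moment bound on $W_p$, Cauchy--Schwartz against $\e V_{l-1}=1$ resp.\ against the tilted measure $V_{l-1}\,d\p$, and the telescoping of $\sigma_p^2=\xi'(q_{p+1})-\xi'(q_p)$) is exactly the structure of that cited argument, with the trivial bound $\leq 2$ correctly absorbing the regimes where the smallness assumptions fail. The only cosmetic difference is that you derive $\e[W_p^2\mid\mathcal{F}_p]\leq(1-m_p^2\sigma_p^2)^{-1}$ from the Gaussian Poincar\'e inequality rather than from the Lipschitz concentration bound $\e e^{2m_pY}\leq e^{2m_p\e Y+2m_p^2\sigma_p^2}$; both rest on the same input $\|A_{p+1}'\|_\infty\leq 1$, which your backward induction correctly supplies.
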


\begin{proof}
Similar arguments as $(14.468)$ and $(14.469)$ in \cite{Tal11} will yield the announced results. 
\end{proof}

Recall the definition of the Parisi measure $\mu_P$. It is the weak limit of a sequence of probability measures $\mu_n\in\mbox{MIN}(\varepsilon_n)$ with $\varepsilon_n\downarrow 0.$ For clarity, in the following, we will only use $(k,\mathbf{m},\mathbf{q})$ to denote the triplet corresponding to $\mu_n.$ One has to keep in mind that this triplet depends on $n$ and $\varepsilon_n.$ Note that since $\mu_P$ is nontrivial, we may assume $k\geq 1$ for all $n\geq 1.$ Let $0\leq s\leq k+1$ satisfy $q_{s}\leq c<q_{s+1}.$ Without loss of generality, we may assume that the limits of $q_{s}$, $q_{s+1},$ and $m_{s}$ exist and they are denoted by $c_-$, $c_+,$ and $m_c$, respectively. Note that if $c_-<c<c_+,$ then $c_-<c$ implies $m_c=0,$ but on the other hand, $c<c_+$ implies $\min\mbox{supp}\mu_P>c$, a contradiction. Therefore, we can further assume that one of the following occurs.
\begin{itemize}
\item[$(i)$] $c_-=c$ and there is some $0<c'<c$ such that $q_{s}>c'$ for all $n$.
\item[$(ii)$] $c_+=c$ and $q_{s+1}<1$ for all $n.$
\end{itemize}

\begin{lemma}
\label{sec3:lem4} We have that
\begin{align}\label{sec3:lem4:eq1}
\lim_{n\rightarrow\infty}\e|W_1\cdots W_{s-1}-1|=0.
\end{align}
If in addition, $(ii)$ occurs, then we also have 
\begin{align}
\label{sec3:lem4:eq2}
\lim_{n\rightarrow\infty}|W_1\cdots W_s-1|=0.
\end{align}
\end{lemma}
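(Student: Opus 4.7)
The plan is to reduce Lemma~\ref{sec3:lem4} entirely to Lemma~\ref{sec3:lem3} by means of a single telescoping split $W_1\cdots W_r-1=W_1\cdots W_{l-1}(W_l\cdots W_r-1)+(W_1\cdots W_{l-1}-1)$ whose cut point $l=l(n)$ is adapted to $n$. The driving observation is that the weak convergence $\mu_n\to\mu_P$ together with $c=\min\mathrm{supp}\,\mu_P>0$ forces the cumulative mass $m_p=\mu_n([0,q_p])$ to vanish along any index whose $q_p$ stays strictly below $c$, because $\mu_P([0,c-\delta])=0$ for every $\delta\in(0,c)$.

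Fix $\delta>0$ small so that $c-\delta>0$ and $c-\delta$ is a continuity point of $\mu_P$ (cofinitely many $\delta$ qualify, since $\mu_P$ has at most countably many atoms). Introduce the cutoff
$$l=l(n):=\min\{p\geq 1:q_p\geq c-\delta\},$$
which is well defined because $q_{k+2}=1$, satisfies $q_{l-1}<c-\delta$ when $l\geq 2$, and obeys $l\leq s+1$ for every $n$ because $q_{s+1}>c>c-\delta$. Weak convergence together with $q_{l-1}<c-\delta$ then gives
$$m_{l-1}\leq \mu_n([0,c-\delta))\;\longrightarrow\;\mu_P([0,c-\delta))=0,$$
with the case $l=1$ being trivial since $m_0=0$.

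For (\ref{sec3:lem4:eq1}) I would apply the split with $r=s-1$. When $l\leq s$ the indices $l\leq p\leq s$ satisfy $c-\delta\leq q_p\leq q_s\leq c$, so (\ref{sec3:lem3:eq2}) with $l'=s$ bounds the first summand by $M\sqrt{q_s-c+\delta}\leq M\sqrt{\delta}$; when $l=s+1$ that summand is the empty product minus one, hence zero. The second summand is controlled by (\ref{sec3:lem3:eq1}), which yields $\e|W_1\cdots W_{l-1}-1|\leq Mm_{l-1}$. Altogether
$$\e|W_1\cdots W_{s-1}-1|\leq M\sqrt{\delta}+Mm_{l-1},$$
and sending $n\to\infty$ followed by $\delta\to 0$ along continuity points of $\mu_P$ proves (\ref{sec3:lem4:eq1}). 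For (\ref{sec3:lem4:eq2}) the same scheme with $r=s$ and $l'=s+1$ works: case (ii) guarantees $q_{s+1}<1$ (so $W_s$ is a genuine factor) and $q_{s+1}\to c$, hence (\ref{sec3:lem3:eq2}) delivers $\e W_1\cdots W_{l-1}|W_l\cdots W_s-1|\leq M\sqrt{q_{s+1}-c+\delta}\to M\sqrt{\delta}$, while the tail term is handled exactly as before.

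Anticipated difficulty: the argument is quite mechanical once the cutoff $l(n)$ is installed; the only real care is in selecting $c-\delta$ at a continuity point of $\mu_P$, verifying $l\leq s+1$, and handling the degenerate case $l=s+1$ in which the interior product is empty and Lemma~\ref{sec3:lem3}'s second estimate is vacuously satisfied. No analytic ingredient beyond Lemma~\ref{sec3:lem3} itself is required.
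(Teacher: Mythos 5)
Your proposal is correct and follows essentially the same route as the paper: the same telescoping split $W_1\cdots W_r-1=W_1\cdots W_{l-1}(W_l\cdots W_r-1)+(W_1\cdots W_{l-1}-1)$ at a cutoff $l=l(n)$ determined by $c-\delta$, the same two applications of Lemma~\ref{sec3:lem3}, and the same passage $n\to\infty$ followed by $\delta\to 0$ (the paper phrases the small-mass input as ``$m_p\leq\eta$ for $p\leq l-1$'' rather than $m_{l-1}\to 0$, and your continuity-point precaution is unnecessary since $[0,c-\delta]$ is closed of $\mu_P$-measure zero, but these are cosmetic differences). The only point to tidy is the degenerate case $l=s+1$, where the split identity with $r=s-1$ does not literally apply; there one simply invokes \eqref{sec3:lem3:eq1} directly, exactly as the paper implicitly does.
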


\begin{proof}
Let $0<\delta<c$ be fixed. Suppose that $1\leq l\leq s+1$ is the largest integer such that $q_{l-1}\leq c-\delta.$ Since $\lim_{n\rightarrow\infty}\mu_n([0,c-\delta])=0$, we have that for any $\eta>0$, $m_p\leq \eta$ for every $0\leq l-1$ provided that $n$ is sufficiently large. Since $c-\delta<q_p\leq q_s\leq c<q_{s+1}$ for $l\leq p\leq s,$ using $(\ref{sec3:lem3:eq2})$ twice, we get
\begin{align}
\begin{split}\label{sec3:lem4:proof:eq2}
\e W_1W_2\cdots W_{l-1}|W_lW_{l+1}\cdots W_{s-1}-1|\leq M\sqrt{q_s-c+\delta}\leq M\sqrt{\delta},
\end{split}\\
\begin{split}\label{sec3:lem4:proof:eq3}
\e W_1W_2\cdots W_{l-1}|W_lW_{l+1}\cdots W_s-1|\leq M\sqrt{q_{s+1}-c+\delta}.
\end{split}
\end{align}
From the triangle inequality, $(\ref{sec3:lem3:eq1})$, and $(\ref{sec3:lem4:proof:eq2}),$ we have that
\begin{align*}
\limsup_{n\rightarrow \infty}\e|W_1W_2\cdots W_{s-1}-1|
&\leq \limsup_{n\rightarrow\infty}\e W_1W_2\cdots W_{l-1}|W_{l}W_{l+1}\cdots W_{s-1}-1|\\
&+\limsup_{n\rightarrow\infty}\e|W_1W_2\cdots W_{l-1}-1|\\
&\leq M\sqrt{\delta}+M\eta.
\end{align*}
Similarly, if $(ii)$ occurs, using the triangle inequality, $(\ref{sec3:lem3:eq1})$, and $(\ref{sec3:lem4:proof:eq3}),$ we obtain
\begin{align*}
\limsup_{n\rightarrow\infty}\e|W_1W_2\cdots W_s-1|&\leq \limsup_{n\rightarrow\infty}\e W_1W_2\cdots W_{l-1}|W_lW_{l+1}\cdots W_s-1|\\
&+\limsup_{n\rightarrow\infty}\e|W_1W_2\cdots W_{l-1}-1|\\
&\leq \lim_{n\rightarrow\infty}M\sqrt{q_{s+1}-c+\delta}+M\eta\\
&=M\sqrt{\delta}+M\eta.
\end{align*}
Since $\delta,\eta>0$ are arbitrary small numbers, passing to the limit implies $(\ref{sec3:lem4:eq1})$ and $(\ref{sec3:lem4:eq2})$.
\end{proof}

Now let us proceed to prove $(\ref{sec3:eq3})$ and $(\ref{sec3:eq4})$ as follows. Suppose that $(i)$ holds. Then from $(\ref{add:eq10}),$ $(a)$ in Proposition $\ref{add:prop1}$,  $(\ref{sec3:lem2:eq1})$, $(\ref{sec3:lem2:eq2})$, and $(\ref{sec3:lem4:eq1}),$ we have
\begin{align*}
\e\left(\frac{\partial\Phi_{\mu_P}}{\partial x}(\chi,c)\right)^2&=\lim_{n\rightarrow\infty}\e A_s'(\zeta_s)^2\\
&=\lim_{n\rightarrow\infty}\e W_1\cdots W_{s-1}A_s'(\zeta_s)^2\\
&=\lim_{n\rightarrow \infty}q_s\\
&=c
\end{align*}
and 
\begin{align*}
\xi''(c)\e\left(\frac{\partial^2\Phi_{\mu_P}}{\partial x^2}(\chi,c)\right)^2&=\lim_{n\rightarrow\infty}\xi''(q_s)\e A_s''(\zeta_s)^2\\
&= \lim_{n\rightarrow\infty}\xi''(q_s)\e W_1\cdots W_{s-1}A_s''(\zeta_s)^2\\
&\leq 1.
\end{align*}
If $(ii)$ holds, then we argue similarly by using from $(\ref{add:eq10}),$ $(a)$ in Proposition $\ref{add:prop1}$,  $(\ref{sec3:lem2:eq3})$, $(\ref{sec3:lem2:eq4})$, and $(\ref{sec3:lem4:eq2})$ to conclude $(\ref{sec3:eq3})$ and $(\ref{sec3:eq4}).$ This completes the argument of our proof.

\section{Controlling the coupled free energy}\label{sec1}

We will recall Guerra's replica symmetry breaking bound for the coupled free energy $(\ref{sec0:eq1})$. From this, we derive a manageable bound by using suitable chosen parameters. As one shall see, this derivation naturally gives rise to a crucial function that will be used in Section $\ref{sec2}$ to determine the unique constant $u_f$ as stated in our chaos results and also to control the behavior of the overlap $R(\vsi,\vtau)$ as $(\ref{add5}).$

\subsection{Guerra's bound}
Recall the two systems corresponding to the Hamiltonians $H_N^1$ and $H_N^2$ in $(\ref{intro:eq18})$. We denote by $Z_N^1$ and $Z_N^2$ the partition functions, by $\mathcal{P}_{k_1}^1(\mathbf{m}^1,\mathbf{q}^1)$ and $\mathcal{P}_{k_2}^2(\mathbf{m}^2,\mathbf{q}^2)$ as in $(\ref{intro:eq5}),$ and by  $\mathcal{P}^1(\xi_{1,1},h^1)$ and $\mathcal{P}^2(\xi_{2,2},h^2)$ the variational formulas as in $(\ref{intro:eq13})$ associated to the two systems, respectively. Set $u_{1,1}=u_{2,2}=1$ and $u_{1,2}=u_{2,1}=u$ for some $-1\leq u\leq 1.$ Recall $\xi_{1,1}$, $\xi_{2,2},$ and $\xi_{1,2}$ from $(\ref{add:eq2})$. Define $\xi_{2,1}=\xi_{1,2}$ and $\theta_{j,j'}(x)=x\xi_{j,j'}'(x)-\xi_{j,j'}(x)$ for $1\leq j,j'\leq 2.$ Let $\kappa\geq 1$ be an integer and let $(y_p^1,y_p^2)$ be jointly centered Gaussian r.v. for $0\leq p\leq \kappa$ with
\begin{align}\label{Guerra:eq1}
\e y_p^j y_p^{j'}=\xi_{j,j'}'(\rho_{p+1}^{j,j'})-\xi_{j,j'}'(\rho_{p}^{j,j'}),
\end{align}
where $(\rho_p^{j,j'})_{0\leq p\leq \kappa+1,1\leq j,j'\leq 2}$ are real numbers satisfying $\rho_0^{j,j'}=0$,  $\rho_{\kappa+1}^{j,j'}=u_{j,j'}$ for $1\leq j,j'\leq 2.$ These pairs $(y_p^1,y_p^2)$ are also assumed to be independent of each other. Let $n_0=0\leq n_1\leq \cdots\leq n_{\kappa-1}\leq n_\kappa=1.$ Recall the coupled free energy $p_{N,u}$ from \eqref{sec0:eq1}. The Guerra replica symmetry breaking bound for the coupled free energy is stated as follows.

\begin{theorem}[Guerra]\label{Guerra} We have
\begin{align}\label{Guerra:thm:eq1}
p_{N,u}&\leq 2\log 2+Y_0(\lambda)-\lambda u-\frac{1}{2}\sum_{j,j'\leq 2}\sum_{0\leq p\leq \kappa}n_p(\theta_{j,j'}(\rho_{p+1}^{j,j'})-\theta_{j,j'}(\rho_{p}^{j,j'})),
\end{align}
where $Y_0(\lambda)$ is defined as follows. Starting with 
\begin{align*}
Y_{\kappa+1}(\lambda)&:=\log\left(\cosh\left(h^1+\sum_{0\leq p\leq \kappa}y_p^1\right)\cosh\left(h^2+\sum_{0\leq p\leq \kappa}y_p^2\right)\cosh\lambda\right.\\
&\left.\qquad\qquad+\sinh\left(h^1+\sum_{0\leq p\leq \kappa}y_p^1\right)\sinh\left(h^2+\sum_{0\leq p\leq \kappa}y_p^2\right)\sinh\lambda\right),
\end{align*}
we define decreasingly for $p\geq 1,$ $Y_p(\lambda)=n_p^{-1}\log \e_p\exp n_pY_{p+1}(\lambda)$, where $\e_p$ denotes the expectation in the r.v. $y_n^1$ and $y_n^2$ for $n\geq p.$ In the case of $n_p=0$ for some $p$, we set $Y_p(\lambda)=\e_p Y_{p+1}(\lambda).$ Finally, $Y_0(\lambda)=\e Y_1(\lambda).$
\end{theorem}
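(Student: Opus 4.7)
The plan is a two-step reduction: first encode the overlap constraint by a Lagrange multiplier, and then run Guerra's interpolation adapted to two coupled species. For any $\lambda\in\mathbb{R}$, since $\lambda\sum_{i\leq N}\sigma_i\tau_i=\lambda Nu$ on the set $\{R(\vsi,\vtau)=u\}$, dropping this restriction yields
\begin{equation*}
p_{N,u}\leq -\lambda u+\frac{1}{N}\e\log\sum_{\vsi,\vtau}\exp\Bigl(H_N^1(\vsi)+H_N^2(\vtau)+\lambda\sum_{i\leq N}\sigma_i\tau_i\Bigr),
\end{equation*}
so it suffices to bound the right-hand unconstrained coupled free energy by $2\log 2+Y_0(\lambda)-\frac{1}{2}\sum_{j,j'\leq 2}\sum_{0\leq p\leq\kappa}n_p(\theta_{j,j'}(\rho_{p+1}^{j,j'})-\theta_{j,j'}(\rho_p^{j,j'}))$.

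Next I would set up a Ruelle Probability Cascades (RPC) interpolation. Let $(v_\alpha)_{\alpha\in\mathbb{N}^\kappa}$ be the RPC weights built from $0\leq n_1\leq\cdots\leq n_{\kappa-1}\leq 1$, and for each site $i\leq N$ construct two centered jointly Gaussian processes $(Y_{i,1}(\alpha),Y_{i,2}(\alpha))_\alpha$ indexed by the cascade leaves, with independent copies across $i$, whose cross-covariances at ancestry depth $p=\alpha\wedge\alpha'$ reproduce $\xi_{j,j'}'(\rho_p^{j,j'})$ for $j,j'\in\{1,2\}$. Such a joint construction is available provided the target overlap matrices $(\rho_p^{j,j'})_{j,j'}$ are positive semidefinite at each level, which can be arranged from the boundary condition $\rho_{\kappa+1}^{j,j'}=u_{j,j'}$ with $|u|\leq 1$. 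Introduce the interpolating Hamiltonian
\begin{equation*}
\mathcal{H}_t(\vsi,\vtau,\alpha)=\sqrt{t}\bigl(X_N^1(\vsi)+X_N^2(\vtau)\bigr)+\sqrt{1-t}\sum_{i\leq N}\bigl(\sigma_iY_{i,1}(\alpha)+\tau_iY_{i,2}(\alpha)\bigr)+\sum_{i\leq N}(h_i^1\sigma_i+h_i^2\tau_i)+\lambda\sum_{i\leq N}\sigma_i\tau_i
\end{equation*}
and set $\phi(t)=\frac{1}{N}\e\log\sum_\alpha v_\alpha\sum_{\vsi,\vtau}\exp\mathcal{H}_t(\vsi,\vtau,\alpha)$. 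At $t=0$ the sum over $(\vsi,\vtau)$ factorizes across $i$; each inner $\pm 1$-sum over $(\sigma_i,\tau_i)$ with the coupling $\lambda\sigma_i\tau_i$ produces $4\bigl(\cosh X\cosh Y\cosh\lambda+\sinh X\sinh Y\sinh\lambda\bigr)$, which is exactly the argument of the logarithm defining $Y_{\kappa+1}(\lambda)$, and the standard RPC recursion returns $\log 4+Y_0(\lambda)$ per site; at $t=1$ one recovers the unconstrained coupled free energy on the right-hand side of the Lagrange reduction.

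The crucial step is computing $\phi'(t)$ by Gaussian integration by parts in both $t$-dependent Gaussian families. Under the RPC identification of the overlap between two cascade leaves $\alpha,\alpha'$ with $\rho_{\alpha\wedge\alpha'}^{j,j'}$, a direct computation yields
\begin{equation*}
\phi'(t)=-\frac{1}{2}\sum_{j,j'\leq 2}\Bigl\langle \xi_{j,j'}(R^{j,j'})-\xi_{j,j'}(\rho^{j,j'}_{\alpha\wedge\alpha'})-\xi_{j,j'}'(\rho^{j,j'}_{\alpha\wedge\alpha'})\bigl(R^{j,j'}-\rho^{j,j'}_{\alpha\wedge\alpha'}\bigr)\Bigr\rangle_t,
\end{equation*}
where $R^{j,j'}$ denotes the overlap between the $j$-species and $j'$-species replicas under the Gibbs average at time $t$. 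Each summand is pointwise nonnegative because $\xi_{j,j'}(x)=\sum_{p\geq 1}c_{j,j',p}x^{2p}$ with $c_{j,j',p}\geq 0$ is convex on $[-1,1]$, so $\phi(1)\leq\phi(0)$. The deterministic correction produced by the ancestry-depth terms in the integration by parts accounts exactly for $-\frac{1}{2}\sum_{j,j'}\sum_p n_p(\theta_{j,j'}(\rho_{p+1}^{j,j'})-\theta_{j,j'}(\rho_p^{j,j'}))$, and combining with the Lagrange step produces \eqref{Guerra:thm:eq1}. The main obstacle is the joint construction of $(Y_{i,1}(\alpha),Y_{i,2}(\alpha))$ matching all four covariance profiles $\xi_{j,j'}'(\rho_p^{j,j'})$ simultaneously across the cascade, together with the careful bookkeeping needed to separate the self-overlap ($R^{j,j}=1$) contributions into the $\theta_{j,j'}$ boundary terms; the positivity step itself is immediate from convexity once the quadratic form is correctly extracted.
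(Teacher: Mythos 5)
Your overall architecture (a Lagrange term $\lambda\sum_i\sigma_i\tau_i$, a two-species Guerra/cascade interpolation, convexity of the even functions $\xi_{j,j'}$, and factorization at $t=0$ into the $Y_{\kappa+1}(\lambda)$ recursion) is the right one, and matches the proof the paper points to (Talagrand, Prop.~14.12.4). But the order of your two steps breaks the argument. You drop the constraint $R(\vsi,\vtau)=u$ \emph{before} interpolating, and then interpolate on the unconstrained coupled system. In the Gaussian integration by parts for $\phi'(t)$, besides the off-diagonal replica terms you display, there is a diagonal (``replica paired with itself'') contribution proportional to
\begin{align*}
\xi_{1,1}(1)-\xi_{1,1}'(1)+\xi_{2,2}(1)-\xi_{2,2}'(1)+2\bigl(\xi_{1,2}(R(\vsi,\vtau))-R(\vsi,\vtau)\,\xi_{1,2}'(u)\bigr),
\end{align*}
which enters $\phi'(t)$ with a \emph{plus} sign. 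To recover the $\theta_{1,2}(\rho_{\kappa+1}^{1,2})=\theta_{1,2}(u)$ piece of the telescoping sum you need this to equal (or be bounded above by) $-\sum_{j,j'}\theta_{j,j'}(u_{j,j'})$. On the unconstrained system $R(\vsi,\vtau)$ is not fixed, and convexity gives $\xi_{1,2}(R)-R\,\xi_{1,2}'(u)\geq -\theta_{1,2}(u)$ — the wrong direction, with strict inequality whenever $R\neq u$ (and the $\lambda$-coupling actively pushes $R(\vsi,\vtau)$ away from $u$). So the unconstrained interpolation does not yield the claimed bound.

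The fix is the standard one: keep the restriction $R(\vsi,\vtau)=u$ throughout the interpolation. The term $\lambda(\sum_i\sigma_i\tau_i-Nu)$ vanishes identically on the constrained set, so adding it costs nothing at $t=1$; its only purpose is that at $t=0$, where every summand is positive, one may enlarge the restricted sum to the full sum over $\Sigma_N\times\Sigma_N$, which then factorizes over sites into $4(\cosh X\cosh Y\cosh\lambda+\sinh X\sinh Y\sinh\lambda)$ and produces $2\log 2+Y_0(\lambda)-\lambda u$. With the constraint in force, the self cross-overlap is identically $u$, the diagonal term is the exact constant $-\sum_{j,j'}\theta_{j,j'}(\rho_{\kappa+1}^{j,j'})$, and your convexity argument on the off-diagonal terms (together with the cascade bookkeeping that converts $\e\theta_{j,j'}(\rho_{\alpha\wedge\alpha'}^{j,j'})$ into $\sum_p(n_p-n_{p-1})\theta_{j,j'}(\rho_p^{j,j'})$) closes the proof. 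A minor additional caveat: the simultaneous Gaussian construction you flag is not guaranteed by $|u|\leq 1$ alone; it requires each increment matrix $(\xi_{j,j'}'(\rho_{p+1}^{j,j'})-\xi_{j,j'}'(\rho_p^{j,j'}))_{j,j'}$ to be positive semidefinite, which the theorem takes as a hypothesis and the paper verifies in its application via the Cauchy--Schwartz estimate \eqref{sec2.2:eq2} under $|u|\leq\sqrt{v_1v_2}$.
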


Recalling Guerra's original bound from $(\ref{intro:eq6})$, $(\ref{Guerra:thm:eq1})$ is a kind of two dimensional bound for the coupled free energy. Its proof is essentially the same as that of Proposition 14.12.4 \cite{Tal11} and a more generalized version can be found in Section 15.7 \cite{Tal11}. 
Such bound has played a very fundamental role in Talagrand's original proof for the validity of the Parisi formula \cite{Tal06}, where the two systems he considered are exactly the same, i.e., $\vec{\beta}_1=\vec{\beta}_2,$ $\mathcal{G}^1=\mathcal{G}^2$, and $h^1=h^2.$ In our case, since these external parameters may be essentially different, how to find suitable parameters $\kappa,$ $(n_p)_{0\leq p\leq \kappa+1}$, $(\rho_{p}^{j,j'})_{0\leq p\leq \kappa+1, 1\leq j,j'\leq 2},$ and $\lambda$ to control this bound becomes a very intricate issue. To illustrate the main difficulty, note that from the definition $(\ref{sec0:eq1})$ of $p_{N,u}$ and $(\ref{intro:eq6})$, one sees obviously for all $u\in S_N,$
\begin{align}
\label{add:eq7}
p_{N,u}\leq \frac{1}{N}\e\log Z_N^1+\frac{1}{N}\e\log Z_N^2\leq \mathcal{P}_{k_1}^1(\mathbf{m}^1,\mathbf{q}^1)+\mathcal{P}_{k_2}^2(\mathbf{m}^2,\mathbf{q}^2)
\end{align}
for arbitrary choices of the triplets $(k_1,\mathbf{m}^1,\mathbf{q}^1)$ and $(k_2,\mathbf{m}^2,\mathbf{q}^2)$ satisfying $(\ref{intro:eq0}).$ Thus, if $(\ref{Guerra:thm:eq1})$ is a relevant bound to investigate chaos problems, one should be able to find parameters for $(\ref{Guerra:thm:eq1})$ to recover the inequality $(\ref{add:eq7})$. In the next subsection, one shall see that this can be done for $|u|\leq \sqrt{c_1c_2},$ but the general case remains unclear.

\subsection{a manageable bound}\label{amb}

The goal of this subsection is to derive the following bound for the coupled free energy. Let $\mathbf{m}=(m_p)_{0\leq p\leq k+1}$ satisfy \eqref{intro:eq0}. Consider two triplets $(k,\mathbf{m},\mathbf{q}^1)$ and $(k,\mathbf{m},\mathbf{q}^2)$. We denote by $\mu^1$ and $\mu^2$ the probability measures induced by these two triplets and by $\Phi_{1,\mu^1}$ and $\Phi_{2,\mu^2}$ the PDE solutions $(\ref{intro:eq14})$ associated with $\xi_{1,1},$ $\mu^1$ and $\xi_{2,2}$, $\mu^2,$ respectively. 

\begin{proposition}
\label{sec1:prop} Let $0< v_1,v_2<1.$ Suppose that $v_1=q_{\iota}^1$ and $v_2=q_{\iota}^2$ for some $1\leq \iota\leq k+1$. For every $|u|\leq \sqrt{v_1v_2},$ we have that
\begin{align}
\begin{split}\label{sec1:lem1:eq1}
p_{N,u}&\leq \mathcal{P}_{k}^1(\mathbf{m},\mathbf{q}^1)+\mathcal{P}_{k}^2(\mathbf{m},\mathbf{q}^2)\\
&-\frac{1}{2}\left(\e \frac{\partial\Phi_{1,\mu^1}}{\partial x}(h^1+\chi^1,v_1)\frac{\partial\Phi_{2,\mu^2}}{\partial x}(h^2+\chi^2,v_2)-u\right)^2\\
&+\frac{1}{2}\sum_{p=0}^{{\iota}-1}m_p(\theta_{1,1}(q_{p+1}^1)-\theta_{1,1}(q_p^1))
+\frac{1}{2}\sum_{p=0}^{{\iota}-1}m_p(\theta_{2,2}(q_{p+1}^2)-\theta_{2,2}(q_p^2)),
\end{split}
\end{align}
where $\chi^1$ and $\chi^2$ are jointly centered Gaussian independent of $h^1$ and $h^2$ with $\e (\chi^1)^2=\xi_{1,1}'(v_1)$, $\e(\chi^2)^2=\xi_{2,2}'(v_2)$, and $\e\chi^1\chi^2=\xi_{1,2}'(u)$.
\end{proposition}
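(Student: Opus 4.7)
The plan is to apply Theorem \ref{Guerra} with parameters arranged so that the two coupled Parisi-type iterations interact only at the single level $\iota$, then optimize in $\lambda$ to produce the quadratic penalty $-\tfrac12(\Delta-u)^2$. Take $\kappa = k+1$ and $n_p = m_p$ for $0\le p\le k+1$; for the diagonal cross-overlap sequences use $\rho_p^{1,1} = q_p^1$ and $\rho_p^{2,2} = q_p^2$; and introduce coupling at a single step by setting
\begin{align*}
\rho_p^{1,2} \;=\; \rho_p^{2,1} \;=\; \begin{cases} 0, & 0 \le p \le \iota,\\ u, & \iota+1 \le p \le k+2.\end{cases}
\end{align*}
The inequality $|\xi_{1,2}'(u)|\le\xi_{1,1}'(v_1)^{1/2}\xi_{2,2}'(v_2)^{1/2}$ obtained from a Cauchy-Schwarz argument as in \eqref{sec2.2:eq2} (with $v_j = q_\iota^j$) guarantees that the pair $(y_\iota^1, y_\iota^2)$ is a legitimate jointly Gaussian pair with $\e y_\iota^1 y_\iota^2 = \xi_{1,2}'(u)$, while at every level $p\neq\iota$ the construction forces $(y_p^1, y_p^2)$ to be independent. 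With these choices, the double sum in \eqref{Guerra:thm:eq1} decomposes into two diagonal pieces $\tfrac12\sum_{p=0}^{k+1}m_p(\theta_{j,j}(q_{p+1}^j)-\theta_{j,j}(q_p^j))$ for $j=1,2$ together with a cross-contribution that vanishes by telescoping at every level except $\iota$, where it equals $m_\iota\theta_{1,2}(u)$.

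\textbf{Analyzing $Y_0(\lambda)$.} Exploit the identity
\begin{align*}
\cosh(a)\cosh(b)\cosh\lambda + \sinh(a)\sinh(b)\sinh\lambda \;=\; \cosh(a)\cosh(b)\bigl(\cosh\lambda + \tanh(a)\tanh(b)\sinh\lambda\bigr)
\end{align*}
to write $Y_{k+2}(\lambda) = \log\cosh H^1 + \log\cosh H^2 + \log\bigl(\cosh\lambda + \tanh(H^1)\tanh(H^2)\sinh\lambda\bigr)$, where $H^j = h^j + \sum_{p=0}^{k+1}y_p^j$. At every independent level $p\neq\iota$ the first two summands propagate through the Guerra recursion exactly as in the one-dimensional Parisi iterations for the two individual systems, and the Hoeffding-type bound $\log(\cosh\lambda + t\sinh\lambda) \le \lambda t + \tfrac12\lambda^2$ for $|t|\le 1$ controls the remainder. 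Propagating these bounds down through the levels by means of Gaussian integration by parts, and using the identifications $\partial_x\Phi_{j,\mu^j}(\,\cdot\,,q_p^j) = \partial_x A_p^j$ furnished by \eqref{add:eq9}--\eqref{add:eq10}, the covariance $\xi_{1,2}'(u)$ inserted at level $\iota$ converts the iterated product of hyperbolic tangents into
\begin{align*}
\Delta \;:=\; \e\!\left[\frac{\partial\Phi_{1,\mu^1}}{\partial x}(h^1+\chi^1,v_1)\,\frac{\partial\Phi_{2,\mu^2}}{\partial x}(h^2+\chi^2,v_2)\right],
\end{align*}
with $\chi^1,\chi^2$ exactly the level-$\iota$ Gaussian accumulations having the covariance structure specified in the statement. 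The same calculation produces two byproducts from the level-$\iota$ coupling: an additive $+m_\iota\theta_{1,2}(u)$ that cancels the cross piece of the deterministic sum, and residual sums $\tfrac12\sum_{j=1,2}\sum_{p=0}^{\iota-1}m_p(\theta_{j,j}(q_{p+1}^j)-\theta_{j,j}(q_p^j))$ coming from the portion of the iteration below level $\iota$. Choosing $\lambda = u-\Delta$ minimizes $\lambda\Delta -\lambda u +\tfrac12\lambda^2$ at the value $-\tfrac12(\Delta-u)^2$, and assembling everything gives \eqref{sec1:lem1:eq1}.

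\textbf{Main obstacle.} The delicate technical step is tracking how the single-level coupling $\xi_{1,2}'(u)$ survives the nonlinear $\log\,\exp$ transforms of Guerra's recursion and, after careful Gaussian integration by parts, lands exactly on the product of the two one-dimensional PDE derivatives $\partial_x\Phi_{j,\mu^j}$ evaluated at the arguments $(h^j+\chi^j,v_j)$ with $v_j=q_\iota^j$. The PDE representation \eqref{add:eq9}--\eqref{add:eq10} is the key bookkeeping device, and the choice $v_j=q_\iota^j$ is forced by the decision to place the coupling at level $\iota$. Managing the Hoeffding bound on the log-remainder in tandem with the iteration so that the coefficient of $\lambda^2$ in the final bound is exactly $1/2$ is what makes $\lambda=u-\Delta$ the correct optimal choice and yields the quadratic penalty with the sharp coefficient.
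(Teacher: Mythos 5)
Your overall strategy (Guerra's two--dimensional bound, a cross--covariance $\xi_{1,2}'(u)$ inserted through the $\rho^{1,2}$ sequence, the bound $0\le Y_0''\le 1$, and optimization in $\lambda$ to produce $-\tfrac12(\Delta-u)^2$) is the right one, but your specific parameter choice does not work, for two concrete reasons. First, with $\rho_p^{1,2}=0$ for $p\le\iota$ and $=u$ for $p\ge\iota+1$, the correlated pair is $(y_\iota^1,y_\iota^2)$ with $\e y_\iota^1y_\iota^2=\xi_{1,2}'(u)$ but with \emph{variances equal to the increments} $\xi_{j,j}'(q_{\iota+1}^j)-\xi_{j,j}'(q_\iota^j)$, not to $\xi_{j,j}'(v_j)$. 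The existence of such a pair requires $\xi_{1,2}'(u)^2\le(\xi_{1,1}'(q_{\iota+1}^1)-\xi_{1,1}'(q_\iota^1))(\xi_{2,2}'(q_{\iota+1}^2)-\xi_{2,2}'(q_\iota^2))$, which fails whenever the increments are small; the Cauchy--Schwarz bound \eqref{sec2.2:eq2} you invoke controls $\xi_{1,2}'(u)$ only by the \emph{full} quantities $\xi_{1,1}'(v_1)^{1/2}\xi_{2,2}'(v_2)^{1/2}$ and is the wrong inequality here. Second, even granting existence, placing the correlated pair at an interior level $\iota\ge1$ with nonzero $n_p$'s below it breaks the factorization that makes $Y_0(0)$ and $Y_0'(0)$ computable: at level $\iota$ the operation $\log\e_\iota\exp n_\iota(\cdot)$ applied to a sum of functions of the \emph{correlated} $y_\iota^1,y_\iota^2$ does not split, so $Y_0(0)\ne X_0^1+X_0^2$ and $Y_0'(0)$ picks up the change--of--measure weights $W_1\cdots W_{\iota-1}$ rather than landing on the plain expectation $\e\,\partial_x\Phi_{1,\mu^1}(h^1+\chi^1,v_1)\,\partial_x\Phi_{2,\mu^2}(h^2+\chi^2,v_2)$. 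Your phrase ``propagate through the Guerra recursion exactly as in the one--dimensional Parisi iterations'' is precisely the step that fails; Lemma \ref{lem1} requires $(y_p^1)_{1\le p\le\kappa}$ and $(y_p^2)_{1\le p\le\kappa}$ to be independent.

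The paper's proof fixes both problems with one move: it takes $\kappa=k-\iota+2$, \emph{collapses} levels $0,\dots,\iota-1$ into a single level $0$ with $n_0=0$, $\rho_1^{j,j}=q_\iota^j$, and $\rho_p^{1,2}=u$ for all $p\ge1$. Then the only correlated pair is $(y_0^1,y_0^2)$, whose variances are the full $\xi_{j,j}'(v_j)$ (so \eqref{sec2.2:eq2} does guarantee existence), Lemma \ref{lem1} applies verbatim to give $Y_0(0)=\e D_{1,1}(h^1+y_0^1)+\e D_{2,1}(h^2+y_0^2)$ and $Y_0'(0)=\e D_{1,1}'(h^1+y_0^1)D_{2,1}'(h^2+y_0^2)$ with $D_{j,1}=A_{j,\iota}=\Phi_{j,\mu^j}(\cdot,v_j)$, and Jensen's inequality $\e A_{j,\iota}(h^j+\sum_{p<\iota}z_p^j)\le X_0^j$ accounts for the collapsed levels. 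The missing $\theta_{j,j}$--increments for $p<\iota$ are exactly the two error sums in \eqref{sec1:lem1:eq1}, and the cross $\theta_{1,2}$ term is killed by $n_0=0$ rather than by the cancellation you posit. You would need to restructure your parameter choice along these lines for the argument to go through.
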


We will need a crucial lemma. Let us keep every parameter but $\lambda$ in the statement of Theorem $\ref{Guerra}$ fixed. Recall $\kappa,$ $(n_p)_{0\leq p\leq \kappa}$, and $(y_p^j)_{0\leq p\leq \kappa,1\leq j\leq 2}$ from the last subsection. Starting with $D_{j,\kappa+1}(x)=\log\cosh x$ for $j=1,2$, we define decreasingly for $1\leq p\leq \kappa$ and $j=1,2$ by 
$$
D_{j,p}(x)=\frac{1}{n_p}\log \e_p\exp n_p D_{j,p+1}(x+y_p^j).
$$  
As usual, we define $D_{j,p}(x)=\e_pD_{j,p+1}(x+y_p^j)$ when $n_p=0.$ 

\begin{lemma}\label{lem1}
Suppose that $(y_p^1)_{1\leq p\leq \kappa}$ and $(y_p^2)_{1\leq p\leq \kappa}$ are independent of each other. Then
\begin{align}
\begin{split}\label{lem1:eq1}
Y_0(0)&=\e D_{1,1}(h^1+y_0^1)+\e D_{2,1}(h^2+y_0^2),
\end{split}
\\
\begin{split}
\label{lem1:eq2}
Y_0'(0)&=\e D_{1,1}'(h^1+y_0^1)D_{2,1}'(h^2+y_0^2).
\end{split}
\end{align}
For the second derivative of $Y_0,$ we have for every $\lambda,$
\begin{align}
\label{lem1:eq3}
0\leq Y_0''(\lambda)\leq 1.
\end{align}
\end{lemma}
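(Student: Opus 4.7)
The plan is to handle all three claims by a common downward induction on $p=\kappa+1,\kappa,\ldots,1$, and then pass to $Y_0$ via $Y_0(\lambda)=\e Y_1(\lambda)$. For the identities $(\ref{lem1:eq1})$ and $(\ref{lem1:eq2})$, the key observation is that at $\lambda=0$ the random variable
\begin{align*}
Y_{\kappa+1}(0)=\log\cosh\Bigl(h^1+\sum_{0\le p\le\kappa}y_p^1\Bigr)+\log\cosh\Bigl(h^2+\sum_{0\le p\le\kappa}y_p^2\Bigr)
\end{align*}
splits as a sum of a function of $(y_p^1)_p$ alone and a function of $(y_p^2)_p$ alone. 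Using the lemma's independence hypothesis $(y_p^1)_{p\ge 1}\perp(y_p^2)_{p\ge 1}$, for every $p\ge 1$ the expectation $\e_p$ factors as a product of independent one-dimensional expectations in $y_p^1$ and $y_p^2$. A downward induction on $p$ then yields
\begin{align*}
Y_p(0)=D_{1,p}\Bigl(h^1+\sum_{q<p}y_q^1\Bigr)+D_{2,p}\Bigl(h^2+\sum_{q<p}y_q^2\Bigr),
\end{align*}
and differentiating the product form in $\lambda$ at $\lambda=0$ (using the chain rule and matching the derivative definition of $D_{j,p}$) yields
\begin{align*}
Y_p'(0)=D_{1,p}'\Bigl(h^1+\sum_{q<p}y_q^1\Bigr)\,D_{2,p}'\Bigl(h^2+\sum_{q<p}y_q^2\Bigr).
\end{align*}
Setting $p=1$ and taking the outer expectation over $(h^1,h^2,y_0^1,y_0^2)$ gives $(\ref{lem1:eq1})$ and $(\ref{lem1:eq2})$.

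For $(\ref{lem1:eq3})$, since $Y_0''(\lambda)=\e Y_1''(\lambda)$, it suffices to show $0\le Y_p''(\lambda)\le 1$ for $1\le p\le\kappa+1$. The base case $p=\kappa+1$ relies on the identity
\begin{align*}
\cosh A\cosh B\cosh\lambda+\sinh A\sinh B\sinh\lambda=\sqrt{\cosh(A+B)\cosh(A-B)}\,\cosh(\lambda+c)
\end{align*}
with $c:=\tfrac{1}{2}\log(\cosh(A+B)/\cosh(A-B))$, so that $Y_{\kappa+1}(\lambda)$ agrees with $\log\cosh(\lambda+c)$ up to a $\lambda$-independent constant. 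This gives $Y_{\kappa+1}''(\lambda)=1-\tanh^2(\lambda+c)\in[0,1]$ and, crucially, the sharp identity $Y_{\kappa+1}''+(Y_{\kappa+1}')^2=1$. For the inductive step, introduce the tilted probability density $\rho_p:=\exp(n_pY_{p+1})/\e_p\exp(n_pY_{p+1})$ and carry out the standard computation
\begin{align*}
Y_p'(\lambda)=\e_p^{\rho_p}[Y_{p+1}'(\lambda)],\qquad Y_p''(\lambda)=\e_p^{\rho_p}[Y_{p+1}''(\lambda)]+n_p\,{\rm Var}_{\rho_p}\bigl(Y_{p+1}'(\lambda)\bigr),
\end{align*}
which also handles $n_p=0$ as a degenerate case. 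Nonnegativity $Y_p''\ge 0$ is then immediate from $n_p\ge 0$ and the inductive hypothesis $Y_{p+1}''\ge 0$.

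The one obstacle is the upper bound $Y_p''\le 1$: the variance term in the recursion is genuinely positive, so inducting on the weaker statement $Y_{p+1}''\le 1$ alone would blow up. The correct invariant to propagate is the combined inequality $Y_{p+1}''+(Y_{p+1}')^2\le 1$. Substituting $Y_{p+1}''\le 1-(Y_{p+1}')^2$ into the recursion and using $(Y_p')^2=(\e_p^{\rho_p}Y_{p+1}')^2$, a short algebraic manipulation produces
\begin{align*}
Y_p''+(Y_p')^2\le 1-(1-n_p)\,{\rm Var}_{\rho_p}\bigl(Y_{p+1}'\bigr)\le 1,
\end{align*}
precisely because $0\le n_p\le 1$. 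In particular $Y_p''\le 1$, closing the induction, and $(\ref{lem1:eq3})$ follows after one final $\e$ to pass from $Y_1''$ to $Y_0''$.
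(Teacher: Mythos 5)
Your proposal is correct and follows essentially the same route as the paper, which simply defers to Proposition 14.6.4 and Lemma 14.6.5 of Talagrand's book: the factorization of $Y_p(0)$ and $Y_p'(0)$ via independence of $(y_p^1)$ and $(y_p^2)$, and the propagation of the invariant $Y_p''+(Y_p')^2\le 1$ through the tilted-expectation recursion, are exactly the cited arguments. You have merely written them out in full, including the correct base-case identity $Y_{\kappa+1}(\lambda)=\log R+\log\cosh(\lambda+c)$, and every step checks out.
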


\begin{proof} The proofs for $(\ref{lem1:eq1})$ and $(\ref{lem1:eq2})$ are exactly the same as the arguments in Proposition 14.6.4 \cite{Tal11}, while the statement $(\ref{lem1:eq3})$ can also be obtained from a similar argument as Lemma 14.6.5 \cite{Tal11}.
\end{proof}

Similar to $(\ref{add:eq11})$, we define two sequences of functions $(A_{1,p})_{0\leq p\leq k_1+2}$ and $(A_{2,p})_{0\leq p\leq k_2+2}$ as follows. For $j=1,2,$ suppose that $(z_p^j)_{0\leq p\leq k_j+1}$ are independent centered Gaussian r.v. with $\e (z_p^j)^2=\xi_{j,j}'(q_{p+1}^j)-\xi_{j,j}'(q_p^j).$ Starting with $A_{j,k_j+2}(x)=\log \cosh x,$ we define decreasingly
\begin{align*}
A_{j,p}(x)&=\frac{1}{m_p}\log \e\exp m_p A_{j,p+1}(x+z_p^j)
\end{align*}
for $0\leq p\leq k_j+1$, where we let $A_{j,p}(x)=\e A_{j,p+1}(x+z_p^j)$ when $m_p=0.$  

\smallskip
\smallskip

\begin{Proof of proposition} {\bf \ref{sec1:prop}:} Let us specify the parameters $\kappa,$ $(n_p)_{0\leq p\leq \kappa}$, $(\rho_{p}^{j,j'})_{0\leq p\leq\kappa+1,1\leq j,j'\leq 2}$, and $\lambda$ in Guerra's bound as follows:
\begin{align*}
\kappa&=k-\iota+2,\\
n_0&=0,n_1=m_\iota,\ldots,n_{\kappa}=m_{k+1},\\
\rho_{0}^{1,1}&=0,\rho_{1}^{1,1}=q_\iota^1,\ldots,\rho_{\kappa+1}^{1,1}=q_{k+2}^1,\\
\rho_{0}^{2,2}&=0,\rho_{1}^{2,2}=q_\iota^2,\ldots,\rho_{\kappa+1}^{2,2}=q_{k+2}^2,\\
\rho_{0}^{1,2}&=0,\rho_{1}^{1,2}=u,\ldots,\rho_{\kappa+1}^{1,2}=u,\\
\rho_{0}^{2,1}&=0,\rho_{1}^{2,1}=u,\ldots,\rho_{\kappa+1}^{2,1}=u.
\end{align*}
Recall the jointly centered Gaussian r.v. $(y_p^1,y_p^2)$ for $0\leq p\leq \kappa$ defined in the statement of Guerra's inequality. 
Let us emphasize that with this special choice $(\rho_{p}^{j,j'})_{0\leq p\leq \kappa+1,1\leq j,j'\leq 2}$ and the assumption $|u|\leq \sqrt{q_\iota^1q_\iota^2}=\sqrt{v_1v_2},$ the existence of $(y_p^1,y_p^2)$ for $0\leq p\leq \kappa$ is ensured by \eqref{sec2.2:eq1}. We use these parameters for $Y_0(\lambda)$. Applying the mean value theorem and $(\ref{lem1:eq3})$ to $Y_0(\lambda)-\lambda u$ gives $$
Y_0(\lambda)-\lambda u\leq Y_0(0)+(Y_0'(0)-u)\lambda+\frac{\lambda^2}{2},\,\forall\lambda.
$$
Minimizing the right-hand side of this inequality with respect to $\lambda$ and using \eqref{Guerra:thm:eq1} yield
\begin{align}
\begin{split}\label{add:eq12}
p_{N,u}&\leq 2\log 2+ Y_0(0)-\frac{1}{2}(Y_0'(0)-u)^2-\frac{1}{2}\sum_{j\leq 2}\sum_{p=\iota}^{k+1}m_p(\theta_{j,j}(q_{p+1}^j)-\theta_{j,j}(q_p^j)).
\end{split}
\end{align}
To complete the proof, it remains to check that the three terms on the right-hand side of $(\ref{add:eq12})$ together give the asserted inequality. Note that from \eqref{Guerra:eq1} and our construction, $(y_p^1)_{1\leq p\leq \kappa}$ is independent of $(y_p^2)_{1\leq p\leq \kappa}.$ So \eqref{lem1:eq1} and \eqref{lem1:eq2} holds. Using the definitions of $A_{j,p}$'s and $D_{j,p}$'s, one sees that 
\begin{align}
\label{add:eq13}
D_{1,1}=A_{1,\iota}\,\,\mbox{and}\,\, D_{2,1}=A_{2,\iota}.
\end{align} 
From Jensen's inequality, $$
A_{j,p}(x)=\frac{1}{m_p^j}\log\e\exp m_p^j A_{j,p+1}(x+z_p^j)\geq \e A_{j,p+1}(x+z_p^j)
$$
and by decreasing induction on $p,$ 
$$\e A_{j,\iota}\biggl(h^j+\sum_{0\leq p<\iota}z_p^j\biggr)\leq \e A_{j,0}(h^j)=X_0^j,$$
where $X_0^j$ is defined as in $(\ref{intro:eq5})$ using $(k,\mathbf{m},\mathbf{q}^j)$, $\xi_{j,j},$ and $h^j.$ Since $y_0^j$ is equal to $\sum_{0\leq p<\iota}z_p^j$ in distribution, it follows from the last inequality, $(\ref{lem1:eq1}),$ and $(\ref{add:eq13})$ that
\begin{align}
\label{sec1:lem1:eq5}
Y_0(0)&\leq X_0^1+X_0^2.
\end{align} 
Next, we compute $Y_0'(0).$ Similar to $(\ref{add:eq9})$ and $(\ref{add:eq10})$, the function $A_{j,\iota}$ and $\Phi_{j,\mu^j}$ are related by
$A_{j,\iota}(x)=\Phi_{j,\mu^j}(x,q_\iota^j)=\Phi_{j,\mu^j}(x,v_j).$ From this, $(\ref{lem1:eq2})$, and $(\ref{add:eq13}),$ we have
\begin{align}
\label{add:eq14}
Y_0'(0)&=\e\frac{\partial\Phi_{1,\mu^1}}{\partial x}(h^1+y_0^1,v_1)\frac{\partial\Phi_{2,\mu^2}}{\partial x}(h^2+y_0^2,v_2).
\end{align}
Finally, rewrite
\begin{align*}
&\sum_{j\leq 2}\sum_{p=\iota}^{k+1}m_p(\theta_{j,j}(q_{p+1}^j)-\theta_{j,j}(q_p^j))\\
&=\sum_{j\leq 2}\sum_{p=0}^{k+1}m_p(\theta_{j,j}(q_{p+1}^j)-\theta_{j,j}(q_p^j))
-\sum_{j\leq 2}\sum_{p=0}^{\iota-1}m_p(\theta_{j,j}(q_{p+1}^j)-\theta_{j,j}(q_p^j)).
\end{align*}
Combining $(\ref{add:eq12})$, $(\ref{sec1:lem1:eq5}),$ $(\ref{add:eq14}),$ and this equation together completes our proof.
\end{Proof of proposition}

\section{Determination of the location for the overlap}\label{sec2}

Recall the Gibbs measures $G_N^1$ and $G_N^2$ using the Hamiltonians $H_N^1$ and $H_N^2$ from $(\ref{intro:eq18})$ for the two mixed even-spin systems introduced in Section \ref{sec:intro}. Throughout this section, we will assume that their external fields $h^1$ and $h^2$ are sub-Gausian with $\e(h^1)^2\neq 0$ and $\e(h^2)^2\neq 0$ and $h^1$ and $h^2$ might not be independent. For $j=1,2,$ let us denote by $\mu_{P}^j$ a Parisi measure for the $j$-th system and by $\Phi_{j,\mu_{P}^j}$ the function defined as in $(\ref{intro:eq16})$ using $\xi_{j,j},h^j,\mu_{P}^j$. From the positivity of the Parisi measure (\ref{intro:eq20}), $c_j=\min\mbox{supp}\mu_P^j>0.$ Let us also remark that $c_j<1$. Indeed, as in $(\ref{sec3:eq1})$, if $c_j=1,$ it means that $\mu_P^j$ is replica symmetric and $c_j$ has to satisfy $c_j=\e\tanh^2 Y<1,$ where $Y=\xi_{j,j}'(c_j)^{1/2}w$ for some standard Gaussian $w.$ We will use the bound (\ref{sec1:lem1:eq1}) to determine the constant $u_f$ stated in Theorems $\ref{thm:tc},$ $\ref{thm:dc},$ and $\ref{thm:efc}$ and furthermore, to study the behavior of the overlap $R(\vsi,\vtau)$ in $[-\sqrt{c_1c_2},\sqrt{c_1c_2}].$

\subsection{Determination of $u_f$}

Suppose that $0\leq v_1,v_2\leq 1.$ We define  
\begin{align}\label{chaos:eq1}
\phi_{v_1,v_2}(u)&=\e\frac{\partial\Phi_{1,\mu_{P}^1}}{\partial x}(h^1+\chi^1,v_1)\frac{\partial\Phi_{2,\mu_{P}^2}}{\partial x}(h^2+\chi^2,v_2)
\end{align}
for $|u|\leq \sqrt{v_1v_2},$ where $\chi^1$ and $\chi^2$ are jointly centered Gaussian with $\e(\chi^1)^2=\xi_{1,1}'(v_1),$ $\e(\chi^2)^2=\xi_{2,2}'(v_2),$ and $\e\chi^1\chi^2=\xi_{1,2}'(u).$ As we have explained in the discussion right before Theorem \ref{sec2:prop3}, the existence of $(\chi^1,\chi^2)$ is guaranteed and thus $\phi_{v_1,v_2}$ is well-defined. The constant $u_f$ can be determined by $\phi_{c_1,c_2}$ through the following proposition. 

\begin{proposition}\label{intro:prop1} 
$\phi_{c_1,c_2}$ maps $[-\sqrt{c_1c_2},\sqrt{c_1c_2}]$ into itself and it has a unique fixed point $u_f$. In addition, if $h^1$ and $h^2$ are independent and symmetric with respect to the origin, then $u_f=0.$
\end{proposition}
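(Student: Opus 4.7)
The plan is to apply Theorem \ref{sec2:prop3} with the choices $v_1 = c_1$, $v_2 = c_2$, $F_1 = \frac{\partial \Phi_{1,\mu_P^1}}{\partial x}(\cdot,c_1)$, and $F_2 = \frac{\partial \Phi_{2,\mu_P^2}}{\partial x}(\cdot,c_2)$. With these choices, the auxiliary function $F$ defined by \eqref{sec2:prop3:eq1} coincides exactly with $\phi_{c_1,c_2}$, so the conclusion of that theorem will give both the self-mapping property of the interval $[-\sqrt{c_1 c_2},\sqrt{c_1 c_2}]$ and the uniqueness of the fixed point.

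First I would verify the regularity hypotheses on $F_1,F_2$. Item (b) of Proposition \ref{add:prop1} provides $\|F_j\|_\infty \leq 1$, $\|F_j''\|_\infty \leq 4$, and $0 \leq \inf F_j' < \|F_j'\|_\infty \leq 1$, which is precisely what Theorem \ref{sec2:prop3} demands. The two identities \eqref{sec2:prop3:eq3} and \eqref{sec2:prop3:eq4} are exactly items (d) of Proposition \ref{add:prop1}; their applicability requires $c_j > 0$, which holds by the positivity principle \eqref{intro:eq20} together with the standing assumption $\e(h^j)^2 \neq 0$ for $j=1,2$. All the hypotheses of Theorem \ref{sec2:prop3} are therefore met, and the first assertion of the proposition follows immediately.

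For the second assertion, the plan is to evaluate $\phi_{c_1,c_2}(0)$ and then invoke the uniqueness just proved. At $u=0$, the definition \eqref{add:eq2} of $\xi_{1,2}$ gives $\xi_{1,2}'(0)=0$, so $\eta(0)=0$. Consequently, in the construction \eqref{sec2.2:eq1} the shared Gaussian $w$ drops out and $\chi^1, \chi^2$ become independent. Combined with the assumed independence of $h^1$ and $h^2$ (and their independence from $\chi^1,\chi^2$), the defining expectation factorizes:
\[
\phi_{c_1,c_2}(0) = \e F_1(h^1 + \chi^1)\cdot \e F_2(h^2 + \chi^2).
\]
Since each $h^j$ is symmetric by hypothesis and each $\chi^j$ is centered Gaussian, $h^j+\chi^j$ is symmetrically distributed; by item (c) of Proposition \ref{add:prop1}, $F_j$ is odd, so each factor vanishes. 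Thus $0$ is a fixed point, and uniqueness forces $u_f = 0$.

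Essentially no obstacle is anticipated, since Proposition \ref{add:prop1} was designed so that its items (b), (c), (d) match the input requirements of Theorem \ref{sec2:prop3}; the only subtle point is observing that the independence of $\chi^1$ and $\chi^2$ at $u=0$ is built into \eqref{sec2.2:eq1} through the vanishing of $\eta(0)$, which is what makes the factorization argument for the symmetric case go through.
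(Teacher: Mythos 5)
Your proposal is correct and follows essentially the same route as the paper: both apply Theorem \ref{sec2:prop3} with $F_j=\frac{\partial\Phi_{j,\mu_P^j}}{\partial x}(\cdot,c_j)$, verifying the hypotheses via parts (b) and (d) of Proposition \ref{add:prop1}, and then deduce $u_f=0$ in the symmetric independent case from the factorization of $\phi_{c_1,c_2}(0)$ (using that $\chi^1,\chi^2$ are independent at $u=0$) together with the oddness of $F_j$ from part (c). Your explicit observation that $\eta(0)=0$ forces the independence of $\chi^1,\chi^2$ is a point the paper states without elaboration, but the argument is the same.
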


\begin{proof} Since $\e(h^1)^2\neq 0$ and $\e(h^2)^2\neq 0,$ from $(d)$ in Proposition $\ref{add:prop1}$, we have that
\begin{align}
\begin{split}\label{sec2:prop1:eq1}
\e\frac{\partial\Phi_{j,\mu_{P}^j}}{\partial x}(h^j+\chi^j,c_j)^2&=c_j,
\end{split}
\\
\begin{split}\label{sec2:prop1:eq2}
\xi_{j,j}''(c_j)\e\frac{\partial^2\Phi_{j,\mu_{P}^j}}{\partial x^2}(h^j+\chi^j,c_j)^2&\leq 1.
\end{split}
\end{align}
Let $v_1=c_2$, $v_2=c_2,$ and set 
$$
F_1(x)=\frac{\partial\Phi_{1,\mu_P^1}}{\partial x}(x,c_1)\,\,\mbox{and}\,\,F_2(x)=\frac{\partial\Phi_{2,\mu_{P}^2}}{\partial x}(x,c_2).
$$
Then $(\ref{sec2:prop1:eq1})$ and $(\ref{sec2:prop1:eq2})$ imply $(\ref{sec2:prop3:eq3})$ and $(\ref{sec2:prop3:eq4}).$ In addition, from $(b)$ in Proposition \ref{add:prop1}, $F_1$ and $F_2$ satisfy the required assumptions of Theorem \ref{sec2:prop3}. Thus, $\phi_{c_1,c_2}$ maps $[-\sqrt{c_1c_2},\sqrt{c_1c_2}]$ into itself and has a unique fixed point $u_f.$ Suppose that $h^1$ and $h^2$ are independent and symmetric with respect to the origin. From $(c)$ in Proposition \ref{add:prop1}, since $$
\frac{\partial\Phi_{1,\mu_P^1}}{\partial x}(\cdot,c_1)\,\,\mbox{and}\,\,\frac{\partial\Phi_{2,\mu_P^2}}{\partial x}(\cdot,c_2)
$$ 
are odd functions, one may see clearly that 
$$
\phi_{c_1,c_2}(0)=\e\frac{\partial\Phi_{1,\mu_P^1}}{\partial x}(h^1+\chi^1,c_1)\cdot\e\frac{\partial\Phi_{2,\mu_P^2}}{\partial x}(h^2+\chi^2,c_1)=0\cdot 0=0,
$$
where in this case $\chi^1$ and $\chi^2$ are independent. Thus, $u_f=0$ and this completes our proof.
\end{proof}

\subsection{the behavior of the overlap in $[-\sqrt{c_1c_2},\sqrt{c_1c_2}]$ }

Recall that $\left<\cdot\right>$ is the Gibbs average with respect to $(G_N^1\times G_N^2)^{\otimes\infty}.$ The behavior of the overlap $R(\vsi,\vtau)$ inside $[-\sqrt{c_1c_2},\sqrt{c_1c_2}]$ can be described by the theorem below. For $c_1\leq v_1<1$, $c_2\leq v_2<1$, and $\varepsilon>0,$ we define a set
\begin{align*}
S_\varepsilon(u_f,v_1,v_2)=\left\{
\begin{array}{ll}
\{x:-\sqrt{v_1v_2}\leq x\leq \sqrt{c_1c_2},|x-u_f|\geq \varepsilon\},&\mbox{if $u_f=\sqrt{c_1c_2}$},\\
\\
\{x:-\sqrt{c_1c_2}\leq x\leq \sqrt{v_1v_2},|x-u_f|\geq \varepsilon\},&\mbox{if $u_f=-\sqrt{c_1c_2}$},\\
\\
\{x:-\sqrt{v_1v_2}\leq x\leq \sqrt{v_1v_2},|x-u_f|\geq \varepsilon\},&\mbox{if $|u_f|<\sqrt{c_1c_2}$}.
\end{array}
\right.
\end{align*}

\begin{theorem}\label{sec2:thm1} Let $u_f$ be the fixed point of $\phi_{c_1,c_2}.$ For any $\varepsilon>0$, there exist $c_1<v_1<1$, $c_2<v_2<1$, and $K>0$ that are all independent of $N$ such that for $N\geq 1,$
\begin{align}\label{sec2:thm1:eq1}
\e\left<I\left(R(\vsi,\vtau)\in S_\varepsilon(u_f,v_1,v_2)\right)\right>\leq K\exp\left(-\frac{N}{K}\right).
\end{align}
\end{theorem}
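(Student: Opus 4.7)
The plan is to combine the free-energy bound from Proposition \ref{sec1:prop}, a perturbation argument around the fixed point $u_f$ of $\phi_{c_1,c_2}$, and Gaussian concentration for the partition functions. I pick discrete approximations $\mu_n^j \in \mathrm{MIN}(\varepsilon_n)$ with $\varepsilon_n \downarrow 0$ converging weakly to the Parisi measures $\mu_P^j$ for $j=1,2$. By padding with trivial atoms I may arrange that both triplets share a common $(k,\mathbf{m})$ and that prescribed values $v_1>c_1$, $v_2>c_2$ appear as $q_\iota^1, q_\iota^2$ for a common $1\leq\iota\leq k+1$. Applying Proposition \ref{sec1:prop} then gives, uniformly in $u\in[-\sqrt{v_1v_2},\sqrt{v_1v_2}]\cap S_N$,
\[
p_{N,u}\leq \mathcal{P}_k^1(\mathbf{m},\mathbf{q}^1)+\mathcal{P}_k^2(\mathbf{m},\mathbf{q}^2)-\tfrac{1}{2}\bigl(\Psi_n(u)-u\bigr)^2+\mathcal{E}_n,
\]
with $\Psi_n(u)=\e\frac{\partial\Phi_{1,\mu_n^1}}{\partial x}(h^1+\chi^1,v_1)\frac{\partial\Phi_{2,\mu_n^2}}{\partial x}(h^2+\chi^2,v_2)$ and $\mathcal{E}_n$ the low-tail correction involving $\theta_{1,1}$ and $\theta_{2,2}$.

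Next, I would send $n\to\infty$ and observe: (i) $\mathcal{P}_k^j(\mathbf{m},\mathbf{q}^j)\to\mathcal{P}^j(\xi_{j,j},h^j)$ by definition of $\mu_P^j$; (ii) $\mathcal{E}_n\to 0$, because for atoms with $q_p^j<c_j$ the weight $m_p$ tends to $\mu_P^j([0,q_p^j])=0$ while for the remaining atoms $q_p^j\in[c_j,v_j]$ and the increments of $\theta_{j,j}$ telescope to $\theta_{j,j}(v_j)-\theta_{j,j}(c_j)$, which can be made arbitrarily small by choosing $v_j$ close to $c_j$; (iii) $\Psi_n(u)\to\phi_{v_1,v_2}(u)$ uniformly in $u$ by part $(a)$ of Proposition \ref{add:prop1}. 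The central analytic step is to show that for $v_1,v_2$ sufficiently close to $c_1,c_2$, one has $|\phi_{v_1,v_2}(u)-u|\geq\delta>0$ uniformly on $S_\varepsilon(u_f,v_1,v_2)$. By Theorem \ref{sec2:prop3} applied at $(v_1,v_2)=(c_1,c_2)$, $u_f$ is the unique fixed point of $\phi_{c_1,c_2}$, so continuity and compactness yield this lower bound on $\{|u-u_f|\geq\varepsilon\}\cap[-\sqrt{c_1c_2},\sqrt{c_1c_2}]$; joint continuity of the integrand in $(u,v_1,v_2)$ via \eqref{sec2.2:eq1} extends the estimate to $S_\varepsilon(u_f,v_1,v_2)$ by a standard perturbation argument, and the boundary cases $u_f=\pm\sqrt{c_1c_2}$ reduce to this interior case because $S_\varepsilon$ is one-sided there.

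Combining these steps produces $v_1>c_1$, $v_2>c_2$, $\varepsilon^*>0$, and $N_0$ such that
\[
p_{N,u}\leq \tfrac{1}{N}\e\log Z_N^1+\tfrac{1}{N}\e\log Z_N^2-\varepsilon^*
\]
for every $N\geq N_0$ and every $u\in S_N\cap S_\varepsilon(u_f,v_1,v_2)$. Writing $Z_{N,u}=\sum_{R(\vsi,\vtau)=u}\exp(H_N^1(\vsi)+H_N^2(\vtau))$, one has $\e\langle I(R(\vsi,\vtau)=u)\rangle=\e[Z_{N,u}/(Z_N^1Z_N^2)]$, and Gaussian concentration for $\mathcal{G}^1,\mathcal{G}^2$ together with sub-Gaussian concentration for the external-field sums implies that $\tfrac{1}{N}\log Z_{N,u}$ and $\tfrac{1}{N}\log Z_N^j$ deviate from their expectations by at most $\varepsilon^*/4$ with probability at least $1-K\exp(-N/K)$. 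A standard truncation then yields $\e\langle I(R(\vsi,\vtau)=u)\rangle\leq K\exp(-N/K)$ for each admissible $u$; summing over the at most $2N+1$ such $u$ and absorbing the polynomial factor into the exponential rate gives \eqref{sec2:thm1:eq1}. The main obstacle I anticipate is step (iii) and the perturbation argument showing that $|\phi_{v_1,v_2}(u)-u|$ stays uniformly bounded below on $S_\varepsilon$ under perturbations of $(c_1,c_2)$, together with the careful boundary analysis when $u_f\in\{\pm\sqrt{c_1c_2}\}$.
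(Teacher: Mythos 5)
Your proposal is correct and follows essentially the same route as the paper: it reconstructs Proposition \ref{sec2:prop2} inline (passing to the limit in Proposition \ref{sec1:prop} along $\mu_n^j\in\mbox{MIN}(\varepsilon_n)$ and absorbing the $\theta_{j,j}$ error by taking $v_j$ close to $c_j$), then uses uniqueness of the fixed point of $\phi_{c_1,c_2}$ plus continuity in $(u,v_1,v_2)$ to get a uniform gap $\varepsilon^*$ on $S_\varepsilon(u_f,v_1,v_2)$, and finishes with Gaussian/sub-Gaussian concentration and a union bound over $S_N$. The only cosmetic imprecision is the claim ``$\mathcal{E}_n\to 0$'': what one actually gets is $\limsup_n\mathcal{E}_n\leq\sum_j(\theta_{j,j}(v_j)-\theta_{j,j}(c_j))_+$, which is then made small by the choice of $v_j$, exactly as you go on to say.
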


The core of the proof for this theorem is based on the following proposition. Similar to (\ref{intro:eq13}), let $\mathcal{P}^1(\xi_{1,1},h^1)$ and $\mathcal{P}^2(\xi_{2,2},h^2)$ be the variational formulas corresponding to the two systems.

\begin{proposition}\label{sec2:prop2}
For any two $0<v_1,v_2<1$, we have that for $|u|\leq \sqrt{v_1v_2}$,
\begin{align}
\begin{split}
\label{sec2:prop2:eq1}
p_{N,u}&\leq \mathcal{P}^1(\xi_{1,1},h^1)+\mathcal{P}^2(\xi_{2,2},h^2)-\frac{1}{2}\left(\phi_{v_1,v_2}(u)-u\right)^2\\
&+(\theta_{1,1}(v_1)-\theta_{1,1}(c_1))_++(\theta_{2,2}(v_2)-\theta_{2,2}(c_2))_+.
\end{split}
\end{align}
\end{proposition}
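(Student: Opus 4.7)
The plan is to invoke Proposition \ref{sec1:prop} with discrete triplets that simultaneously approximate two carefully constructed auxiliary measures, arranging for $v_j$ to appear as the very first atom so that the boundary terms vanish, and then to control the resulting limiting Parisi functionals through the PDE \eqref{intro:eq14}.

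For $j=1,2$, define the probability measure $\mu^j$ on $[0,1]$ to put no mass on $[0,v_j)$, an atom of mass $a_j:=\mu_P^j([0,v_j])$ at $v_j$, and to coincide with $\mu_P^j$ on $(v_j,1]$. Then $\mu^j$ and $\mu_P^j$ have identical CDFs on $[v_j,1]$, so \eqref{intro:eq14} forces $\Phi_{j,\mu^j}(\cdot,q)=\Phi_{j,\mu_P^j}(\cdot,q)$ for every $q\in[v_j,1]$. Moreover, because $\mu^j([0,q])=0$ for $q<v_j$, $\Phi_{j,\mu^j}$ solves the pure heat equation on $[0,v_j]$ with terminal data $\Phi_{j,\mu_P^j}(\cdot,v_j)$, yielding $\e\Phi_{j,\mu^j}(h^j,0)=\e\Phi_{j,\mu_P^j}(h^j+\chi^j,v_j)$ with $\chi^j\sim\mathcal{N}(0,\xi_{j,j}'(v_j))$.

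Now pick a sequence of partitions $\mathbf{m}^n=(m_p^n)_{0\leq p\leq k_n+1}$ of $[0,1]$ with mesh tending to zero and containing both $a_1$ and $a_2$ as partition points, and form triplets $(k_n,\mathbf{m}^n,\mathbf{q}^{j,n})$ by setting $q_0^{j,n}=0$, $q_p^{j,n}=v_j$ for $1\leq p\leq r_j^n$ (where $m_{r_j^n}^n=a_j$), and for $p>r_j^n$, $q_p^{j,n}$ equal to the $m_p^n$-quantile of $\mu_P^j$. Each triplet encodes a discrete measure $\mu^{j,n}$ converging to $\mu^j$ in the Lipschitz metric $d$. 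The key point is that $q_1^{j,n}=v_j$ for both $j$, so Proposition \ref{sec1:prop} applies with $\iota=1$, and its boundary sum $\tfrac12 m_0^n(\theta_{j,j}(v_j)-\theta_{j,j}(0))$ vanishes because $m_0^n=0$. Passing to $n\to\infty$, Lipschitz continuity of the Parisi functional gives $\mathcal{P}_{k_n}^j(\mathbf{m}^n,\mathbf{q}^{j,n})\to\mathcal{P}(\xi_{j,j},h^j,\mu^j)$, while Proposition \ref{add:prop1}(a) gives $\partial_x\Phi_{j,\mu^{j,n}}(\cdot,v_j)\to\partial_x\Phi_{j,\mu_P^j}(\cdot,v_j)$ uniformly, so the coupling term in Proposition \ref{sec1:prop} converges to $\phi_{v_1,v_2}(u)$.

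It remains to bound $\mathcal{P}(\xi_{j,j},h^j,\mu^j)-\mathcal{P}^j$. Combining the Parisi representation $\mathcal{P}(\xi_{j,j},h^j,\mu)=\log 2+\e\Phi_{j,\mu}(h^j,0)-\tfrac12\int_0^1\mu([0,q])\theta_{j,j}'(q)\,dq$ with the heat-equation identity above produces
\[
\mathcal{P}(\xi_{j,j},h^j,\mu^j)-\mathcal{P}^j=\bigl[\e\Phi_{j,\mu_P^j}(h^j+\chi^j,v_j)-\e\Phi_{j,\mu_P^j}(h^j,0)\bigr]+\tfrac12\int_0^{v_j}\mu_P^j([0,q])\theta_{j,j}'(q)\,dq.
\]
Setting $f(q):=\e\Phi_{j,\mu_P^j}(h^j+\chi^j(q),q)$ with $\chi^j(q)\sim\mathcal{N}(0,\xi_{j,j}'(q))$, It\^o's formula combined with \eqref{intro:eq14} yields $f'(q)=-\tfrac{\xi_{j,j}''(q)}{2}\mu_P^j([0,q])\,\e(\partial_x\Phi_{j,\mu_P^j})^2\leq 0$, so the bracketed difference is nonpositive. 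The remaining integral vanishes when $v_j\leq c_j$ (since $\mu_P^j$ charges only $[c_j,1]$) and is otherwise bounded by $\theta_{j,j}(v_j)-\theta_{j,j}(c_j)$ using $\mu_P^j([0,q])\leq 1$. Summing over $j=1,2$ yields the claim with a factor-of-two to spare in the $\theta$ terms.

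The main obstacle is the construction in the third paragraph: placing $v_1$ and $v_2$ at the shared index $\iota=1$ when the atom masses $a_1,a_2$ are generically unequal. The resolution is to let each atom at $v_j$ occupy a block of consecutive indices $\{1,\dots,r_j^n\}$ whose length depends on $a_j$, so the two blocks can differ while still sharing the common $\mathbf{m}^n$ and both starting at $\iota=1$---which is exactly what forces the boundary terms to vanish.
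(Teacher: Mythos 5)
Your argument is correct, and it reaches the stated bound (in fact with an extra factor of $\tfrac12$ on the $\theta$ terms). It rests on the same key lemma as the paper --- Proposition \ref{sec1:prop} applied to a sequence of discrete triplets, followed by a limiting argument using the Lipschitz continuity of the Parisi functional and Proposition \ref{add:prop1}(a) --- but the decomposition of the error is genuinely different. The paper keeps the approximating measures equal to discretizations of $\mu_P^j$ itself, takes a general $\iota$ with $q_\iota^j=v_j$, and then bounds the residual telescoping sums $C_j=\tfrac12\sum_{p=0}^{\iota-1}m_p(\theta_{j,j}(q_{p+1}^j)-\theta_{j,j}(q_p^j))$ by splitting at a level $c_j-\delta_j$ and using $\mu_P^j([0,c_j-\delta_j])=0$ in the limit. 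You instead force those sums to vanish identically by taking $\iota=1$ with a measure $\mu^j$ that dumps all mass of $\mu_P^j$ below $v_j$ onto an atom at $v_j$, and you pay for this in the comparison $\mathcal{P}(\xi_{j,j},h^j,\mu^j)-\mathcal{P}^j\le\tfrac12\int_0^{v_j}\mu_P^j([0,q])\theta_{j,j}'(q)\,dq$; the positivity of the support of $\mu_P^j$ enters at that stage rather than in the bound on $C_j$. The two bookkeepings are equivalent in substance, and yours is arguably tidier (the monotonicity of $q\mapsto\e\Phi_{j,\mu^j}(h^j+\chi^j(q),q)$ that you derive from the PDE is exactly what the paper obtains by Jensen's inequality inside the proof of \eqref{sec1:lem1:eq5}). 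One small technical remark: differentiating $f(q)=\e\Phi_{j,\mu_P^j}(h^j+\chi^j(q),q)$ directly for the limiting, possibly non-discrete, $\Phi_{j,\mu_P^j}$ needs justification; it is cleaner to establish $f(v_j)\le f(0)$ at the level of the discrete approximations $\mu^{j,n}$ (where it is the Jensen chain $A_{j,p}(x)\ge\e A_{j,p+1}(x+z_p^j)$) and pass to the limit with Proposition \ref{add:prop1}(a), exactly as the paper does. This is a presentational point, not a gap.
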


\begin{Proof of theorem} {\bf \ref{sec2:thm1}:} Since the proof for the three cases of $u_f$ are the same, we will only present the details for the case $u_f=\sqrt{c_1c_2}.$ For $\varepsilon>0$, since $u_f$ is the unique fixed point of $\phi_{c_1,c_2},$ it implies 
\begin{align*}
\varepsilon_1^*:=\frac{1}{2}\min\{|\phi_{c_1,c_2}(u)-u|^2:u\in S_\varepsilon(u_f,c_1,c_2)\}>0.
\end{align*}
Recall $S_N:=\{i/N:-N\leq i\leq N\}.$ Taking $v_1=c_1,v_2=c_2$ and applying $(\ref{sec2:prop2:eq1}),$ we have that 
\begin{align}
\label{sec2:thm1:proof:eq1}
p_{N,u}\leq \mathcal{P}^1(\xi_{1,1},h^1)+\mathcal{P}^2(\xi_{2,2},h^2)-\varepsilon_1^*
\end{align}
for all $N\geq 1$ and $u\in S_N\cap S_\varepsilon(u_f,c_1,c_2).$ Let us observe the following facts:
\begin{itemize}
\item From Proposition \ref{add:prop1} $(a)$, the mapping $(u,v_1,v_2)\mapsto \phi_{v_1,v_2}(u)$ is a continuous function on the space $$\{(u,v_1,v_2):0\leq v_1,v_2\leq 1,|u|\leq\sqrt{v_1v_2}\}.$$
\item From Proposition \ref{intro:prop1}, $(\phi_{c_1,c_2}(-u_f)+u_f)^2>0.$
\item $\lim_{v_j\downarrow c_j}(\theta_{j,j}(v_j)-\theta_{j,j}(c_j))_+=0$ for $j=1,2$.
\end{itemize} 
They together imply that there exist $c_1<v_1<1$, $c_2<v_2<1$, and $\varepsilon_2^*>0$ such that
\begin{align*}
\frac{1}{2}(\phi_{v_1,v_2}(u)-u)^2-(\theta_{1,1}(v_1)-\theta_{1,1}(c_1))_+-(\theta_{2,2}(v_2)-\theta_{2,2}(c_2))_+\geq \varepsilon_2^*
\end{align*}
for all $u\in S_N$ with $-\sqrt{v_1v_2}\leq u\leq -\sqrt{c_1c_2}$. Consequently from $(\ref{sec2:prop2:eq1}),$ these $u$'s satisfy for $N\geq 1,$
\begin{align*}
p_{N,u}\leq \mathcal{P}^1(\xi_{1,1},h^1)+\mathcal{P}^2(\xi_{2,2},h^2)-\varepsilon_2^*.
\end{align*}
Let $\varepsilon^*=\min(\varepsilon_1^*,\varepsilon_2^*).$ This and $(\ref{sec2:thm1:proof:eq1})$ conclude that for $N\geq 1,$
\begin{align}
\label{sec2:thm1:proof:eq3}
p_{N,u}\leq \mathcal{P}^1(\xi_{1,1},h^1)+\mathcal{P}^2(\xi_{2,2},h^2)-\varepsilon^*
\end{align}
for $u\in S_N\cap S_\varepsilon(u_f,v_1,v_2).$ Using Parisi's formula \eqref{pf}, the free energies $p_{N}^1,p_N^2$ for the two systems satisfy 
$
\mathcal{P}^1(\xi_{1,1},h^1)+\mathcal{P}^2(\xi_{2,2},h^2)<p_N^1+p_{N}^2+{\varepsilon^*}/{4}
$
for large $N.$ It follows from \eqref{sec2:thm1:proof:eq3} that 
\begin{align}
\label{eq4}
p_{N,u}\leq p_N^1+p_N^2-\frac{3\varepsilon^*}{4},\,\,\forall u\in S_N\cap S_\varepsilon(u_f,v_1,v_2).
\end{align}

To finish our proof, we will need concentration inequalities for the free energies with respect to the two major sources of randomness, $\mathcal{G}^1$, $\mathcal{G}^2$ and $\mathbf{h}^1:=(h_i^1)_{i\leq N}$, $\mathbf{h}^2:=(h_i^2)_{i\leq N}$. Since $X_N^1$ and $X_N^2$ are jointly Gaussian, there exists a $M$-dimensional standard Gaussian r.v. $\mathbf{g}$ and a vector $\mathbf{x}(\vsi,\vtau)\in\mathbb{R}^M$ such that $(\mathbf{g}\cdot \mathbf{x}(\vsi,\vtau):\vsi,\vtau)$ has the same distribution as the family $(X_N^1(\vsi)+X_N^2(\vtau):\vsi,\vtau).$ Let $u\in S_N.$ Define
\begin{align*}
F(\mathbf{g},\mathbf{y}^1,\mathbf{y}^2)&=\frac{1}{N}\log\sum_{R(\vsi,\vtau)=u}\exp\biggl(\mathbf{g}\cdot\mathbf{x}(\vsi,\vtau)+\mathbf{y}^1\cdot\vsi+\mathbf{y}^2\cdot\vtau\biggr)
\end{align*}
for $(\mathbf{g},\mathbf{y}^1,\mathbf{y}^2)\in \mathbb{R}^M\times\mathbb{R}^N\times\mathbb{R}^N.$ We denote by $\p_\mathbf{g}$ and $\e_\mathbf{g}$ the probability and expectation with respect to only the randomness $\mathbf{g}.$ Similarly, $\p_{\mathbf{h}}$ and $\e_{\mathbf{h}}$ are defined for $\mathbf{h}^1,\mathbf{h}^2.$ Note that 
\begin{align*}
\|\mathbf{x}(\vsi,\vtau)\|^2&=\e(H_N^1(\vsi)+H_N^2(\vtau))^2\\
&=N(\xi_{1,1}(1)+\xi_{2,2}(1)+2\xi_{1,2}(R(\vsi,\vtau)))\\
&\leq NK^2,
\end{align*}
where $K:=(\xi_{1,1}(1)+\xi_{2,2}(1)+2\xi_{1,2}(1))^{1/2}.$ From this, the Cauchy-Schwartz inequality yields that 
\begin{align*}
|\mathbf{g}\cdot\mathbf{x}(\vsi,\vtau)-\mathbf{g}'\cdot\mathbf{x}(\vsi,\vtau)|&\leq\|\mathbf{x}(\vsi,\vtau)\|\|\mathbf{g}-\mathbf{g}'\|\leq \sqrt{N}K\|\mathbf{g}-\mathbf{g}'\|,\,\,\forall \mathbf{g},\mathbf{g}',
\end{align*}
and thus $F(\cdot,\mathbf{y}^1,\mathbf{y}^2)$ is Lipschitz with constant $K/\sqrt{N}$ for any $\mathbf{y}^1,\mathbf{y}^2$. Applying the Gaussian concentration inequality (see Theorem 1.4.3 \cite{Tal11}) gives
\begin{align}\label{eq1}
\p_{\mathbf{g}}\biggl(\bigl|F(\mathbf{g},\mathbf{y}^1,\mathbf{y}^2)-\e_\mathbf{g}F(\mathbf{g},\mathbf{y}^1,\mathbf{y}^2)\bigr|\geq \frac{\varepsilon^*}{16}\biggr)\leq K_1\exp \biggl(-\frac{N}{K_1}\biggr),
\end{align}
where $K_1>0$ is independent of $u,$ $N,$ $\mathbf{y}^1,\mathbf{y}^2$. Using the Cauchy-Schwartz inequality again, one also has that for $1\leq i\leq N,$
\begin{align*}
&|(\mathbf{y}^1\cdot\vsi+\mathbf{y}^2\cdot\vtau)-(\mathbf{y}_i^1(z^1)\cdot\vsi+\mathbf{y}_i^2(z^2)\cdot\vtau)|\\
&=|y_i^1\sigma_i+y_i^2\tau_i-z^1\sigma_i-z^2\tau_i|\\
&\leq |y_i^1|+|y_i^2|+|z^1|+|z^2|
\end{align*}
and so
\begin{align*}
|F(\mathbf{g},\mathbf{y}^1,\mathbf{y}^2)-F(\mathbf{g},\mathbf{y}_i^1(z^1),\mathbf{y}_i^2(z^2))|\leq \frac{1}{N}(|y_i^1|+|y_i^2|+|z^1|+|z^2|)
\end{align*}
for all $\mathbf{g}\in\mathbb{R}^M$, $\mathbf{y}^1=(y_1^1,\ldots,y_N^1)$, $\mathbf{y}^2=(y_1^2,\ldots,y_N^2)\in\mathbb{R}^N,$ and $z^1,z^2\in\mathbb{R},$ where $\mathbf{y}_i^j(z^j):=(y_1^j,\ldots,y_{i-1}^{j},z^j,y_{i+1}^{j},\ldots,y_N^j).$ If we define $F(\mathbf{y}^1,\mathbf{y}^2)=\e_{\mathbf{g}}F(\mathbf{g},\mathbf{y}^1,\mathbf{y}^2),$ then 
\begin{align*}
&|F(\mathbf{y}^1,\mathbf{y}^2)-F(\mathbf{y}_i^1(z^1),\mathbf{y}_i^2(z^2))|\leq \frac{1}{N}(|y_i^1|+|y_i^2|+|z^1|+|z^2|).
\end{align*}
This inequality allows us to use the concentration inequality for correlated
sub-Gaussian r.v. (see Proposition \ref{appendix:prop1} in Appendix),
\begin{align}\label{eq2}
\p_\mathbf{h}\biggl(\bigl|F(\mathbf{h}^1,\mathbf{h}^2)-\e_{\mathbf{h}}F(\mathbf{h}^1,\mathbf{h}^2)\bigr|\geq \frac{\varepsilon^*}{16}\biggr)\leq K_2\exp\biggl(-\frac{N}{K_2}\biggr),
\end{align}
where $K_2>0$ is independent of $u$ and $N$. Putting \eqref{eq1} and \eqref{eq2} together, using triangle inequality, and the independence between $\mathbf{g}$ and $\mathbf{h}^1,\mathbf{h}^2$ give
\begin{align}
\begin{split}\label{eq3}
&\p\biggl(\bigl|F(\mathbf{g},\mathbf{h}^1,\mathbf{h}^2)-\e F(\mathbf{g},\mathbf{h}^1,\mathbf{h}^2)\bigr|\geq \frac{\varepsilon^*}{8}\biggr)\leq K_3\exp\biggl(-\frac{N}{K_3}\biggr),
\end{split}
\end{align}
where $K_3:=K_1+K_2.$ In other words, with probability $\geq 1-K_3\exp(-N/K_3),$
$$
\biggl|\frac{1}{N}\log\sum_{R(\vsi,\vtau)=u}\exp\bigl(H_N^1(\vsi)+H_N^2(\vtau)\bigr)-p_{N,u}\biggr|<\frac{\varepsilon^*}{8}.
$$
A similar argument also yields that with probability $\geq 1-K_4\exp(-N/K_4)$, we have
\begin{align*}
\max\biggl\{\biggl|\frac{1}{N}\log Z_N^1-p_{N}^1\biggr|,\biggl|\frac{1}{N}\log Z_N^2-p_{N}^2\biggr|\biggr\}<\frac{\varepsilon^*}{8},
\end{align*}
where $K_4>0$ is independent of $N.$ From \eqref{eq4}, combining these inequalities together and noting that $S_N$ contains at most $2N+1$ numbers, we conclude that for large $N$, 
the following holds
\begin{align*}
\bigl<I\bigl(R(\vsi,\vtau)\in S_N\cap S_\varepsilon(u_f,v_1,v_2)\bigr)\bigr>\leq \exp\biggl(-\frac{3\varepsilon^*N}{8}\biggr)
\end{align*}
with probability at least $$
1-\bigl(2K_4\exp(-N/K_4)+(2N+1)K_3\exp(-N/K_3)\bigr).
$$ 
This clearly gives our assertion.
\end{Proof of theorem}

\begin{Proof of proposition} {\bf \ref{sec2:prop2}:} Consider two triplets $(k,\mathbf{m},\mathbf{q}^1)$ and $(k,\mathbf{m},\mathbf{q}^2)$. Suppose that $q_\iota^1=v_1$ and $q_{\iota}^2=v_2$ for some $\iota$ with $1\leq \iota\leq k+1.$ From Proposition $\ref{sec1:prop}$, we obtain the bound $(\ref{sec1:lem1:eq1}).$ Let $\delta_1$ and $\delta_2$ satisfy $0<\delta_1<c_1$ and $0<\delta_2<c_2.$ For convenience, we denote by $C_1$ the first term and $C_2$ the second term of the last line of the inequality $(\ref{sec1:lem1:eq1}).$ If $q_\iota^j\leq c_j-\delta_j,$ then 
\begin{align*}
C_j&\leq \max\{m_p:q_p^j\leq c_j-\delta_j\}\sum_{0\leq p\leq \iota-1}(\theta_{j,j}(q_{p+1}^j)-\theta_{j,j}(q_p^j))\\
&\leq {\mu}^j([0,c_j-\delta_j])\theta_{j,j}(1);
\end{align*}
if $q_\iota^j>c_j-\delta_j,$ then
\begin{align*}
C_j&\leq  \max\{m_p:q_p^j\leq c_j-\delta_j\}\sum_{0\leq p\leq \iota-1}(\theta_{j,j}(q_{p+1}^j)-\theta_{j,j}(q_p^j))\\
&+\sum_{0\leq p\leq \iota-1:q_p^j>c_j-\delta_j}(\theta_{j,j}(q_{p+1}^j)-\theta_{j,j}(q_p^j))\\
&\leq {\mu}^j([0,c_j-\delta_j])\theta_{j,j}(1)+\theta_{j,j}(v_j)-\theta_{j,j}(c_j-\delta_j).
\end{align*}
As a summary, we have
\begin{align}
\label{sec2:prop2:proof:eq2}
C_j\leq {\mu}^j([0,c_j-\delta_j])\theta_{j,j}(1)+(\theta_{j,j}(v_j)-\theta_{j,j}(c_j-\delta_j))_+.
\end{align} 
Finally, combining $(\ref{sec1:lem1:eq1})$ and $(\ref{sec2:prop2:proof:eq2})$, we obtain that 
\begin{align}
\begin{split}\label{add:eq15}
p_{N,u}&\leq \mathcal{P}_{k}^1(\mathbf{m},\mathbf{q}^1)+\mathcal{P}_{k}^2(\mathbf{m},\mathbf{q}^2)\\
&-\frac{1}{2}\left(\e\frac{\partial\Phi_{1,\mu^1}}{\partial x}(h^1+\chi^1,v_1)\frac{\partial\Phi_{2,\mu^2}}{\partial x}(h^2+\chi^2,v_2)-u\right)^2\\
&+\mu^1([0,c_1-\delta_1])\theta_{1,1}(1)+(\theta_{1,1}(v_1)-\theta_{1,1}(c_1-\delta_1))_+\\
&+\mu^2([0,c_2-\delta_2])\theta_{2,2}(1)+(\theta_{2,2}(v_2)-\theta_{2,2}(c_2-\delta_2))_+.
\end{split}
\end{align}
where $\chi_1$ and $\chi_2$ are jointly centered Gaussian with $\e\chi_1^2=\xi_{1,1}'(v_1),$ $\e\chi_2^2=\xi_{2,2}'(v_2),$ and $\e \chi_1\chi_2=\xi_{1,2}'(u).$ 

\smallskip

Finally, take two sequences of triplets $(k_{n},\mathbf{m}_n,\mathbf{q}_n^1)_{n\geq 1}$ and $(k_{n},\mathbf{m}_n,\mathbf{q}_n^2)_{n\geq 1}$ such that their corresponding probability measures $(\mu_n^1)_{n\geq 1}$ and $(\mu_n^2)_{n\geq 1}$ converge weakly to $\mu_P^1$ and $\mu_P^2,$ respectively. We may also require that $q_{\iota_n}^1=v_1$ and $q_{\iota_n}^2=v_2$ for some $1\leq \iota_n\leq k_n+1$ for each $n.$ Applying these triplets to $(\ref{add:eq15})$, $(a)$ in Proposition \ref{add:prop1}, and then letting $\delta_j\downarrow 0$, the asserted result follows.
\end{Proof of proposition}



\section{Controlling overlaps using identities}\label{identity}

Recall from Section \ref{sec:intro} that $G_N^1$ and $G_N^2$ are the Gibbs measures corresponding to the Hamiltonians $H_N^1$ and $H_N^2$ as in $(\ref{intro:eq18})$ using temperature $\vec{\beta}_1$ and $\vec{\beta}_2$, disorder $\mathcal{G}^1$ and $\mathcal{G}^2,$ and external field $h^1$ and $h^2,$ respectively. Throughout this section, we will assume that $h^1$ and $h^2$ are jointly Gaussian (might not be centered). Recall that $c_j$ is the minimum value of the support of the Parisi measure $\mu_{P}^j$ for $j=1,2.$ The major goal of this section is to prove that under the assumptions of Theorems \ref{thm:tc} and $\ref{thm:dc},$ the following theorems hold that will be used in the problems of chaos in temperature and disorder. Recall the definitions of $(t_p)_{p\geq 1},$ $\mathcal{C}_0,$ $\mathcal{I}_1,$ $\mathcal{I}_2,$ $(C_1),$ and $(C_2)$ from Section \ref{sec:intro}. 

\begin{theorem}
\label{GG:thm01}
Let $t_{p}=1$ for all $p\in\mathbb{N}$. Suppose that $\mathcal{I}_1$ and $\mathcal{I}_2$ satisfy $(C_1)$ and $(C_2),$ respectively. Then for $j=1,2,$
\begin{align}
\label{GG:thm01:eq1}
\lim_{N\rightarrow\infty}\e\left<I(|R(\vsi,\vtau)|>\sqrt{c_j}+\varepsilon)\right>=0,\,\,\forall \varepsilon>0.
\end{align}
If $\mbox{\rm Var}(h^j)\neq 0$ for both $j=1,2$, then 
\begin{align}
\label{GG:thm01:eq2}
\lim_{N\rightarrow\infty}\e\left<I(|R(\vsi,\vtau)|>\sqrt{c_1c_2}+\varepsilon)\right>=0,\,\,\forall \varepsilon>0.
\end{align}
\end{theorem}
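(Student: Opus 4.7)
The plan is to combine Ghirlanda-Guerra identities for the two individual systems with a new family of cross-system identities, then use a Cauchy-Schwarz inequality to convert overlap moments of the coupled system into moments of individual-system overlaps. This splits naturally into three steps.

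First, I would establish the Ghirlanda-Guerra identities for each system $j = 1,2$ separately. By $(C_j)$ there exists $\mathcal{A} \subseteq \mathcal{I}_j$ with $\mathcal{A} \in \mathcal{C}_0$. The standard Talagrand-Panchenko perturbation, adding to $H_N^j$ a convergent series of independent pure $p$-spin Hamiltonians with $p \in \mathcal{A}$ and random Gaussian coefficients, yields upon differentiation and Gaussian concentration the Ghirlanda-Guerra identities for the test functions $x^{2p}$, $p \in \mathcal{A}$, in the limit $N \to \infty$. The Muntz-Szasz theorem applied to $\mathcal{A} \in \mathcal{C}_0$ extends these identities by density to all continuous test functions on $[0,1]$, pinning down the limit distribution of $R(\vsi^1, \vsi^2)$ under the Gibbs average of system $j$ alone as the Parisi measure $\mu_P^j$, which is supported in $[c_j, 1]$.

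Second, I would derive a new family of identities for the coupled system. Since $t_p = 1$ for all $p$ gives $\mathcal{G}^1 = \mathcal{G}^2$, the coupled Hamiltonian $H_N^1(\vsi) + H_N^2(\vtau)$ can be perturbed by coupled pure $p$-spin terms of the form
\begin{align*}
\varepsilon \sum_{i_1,\ldots,i_{2p}} g^0_{i_1,\ldots,i_{2p}}\bigl(\sigma_{i_1}\cdots\sigma_{i_{2p}} + \tau_{i_1}\cdots\tau_{i_{2p}}\bigr),
\end{align*}
with fresh independent Gaussians $g^0$, indexed by $p$'s drawn from the sets appearing in $(C_1)$ and $(C_2)$. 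Differentiating the perturbed coupled free energy in $\varepsilon$, applying Gaussian integration by parts, and invoking concentration produces asymptotic identities in which cross-overlap moments $\e\langle R(\vsi^1, \vtau^2)^{2p} f\rangle$ appear on equal footing with the within-system moments $\e\langle R(\vsi^1, \vsi^2)^{2p} f\rangle$ and $\e\langle R(\vtau^1, \vtau^2)^{2p} f\rangle$.

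Third, I would combine the two families of identities with a Cauchy-Schwarz inequality. The basic bound
\begin{align*}
\e\langle R(\vsi,\vtau)^{2k}\rangle \leq \bigl(\e\langle R(\vsi^1,\vsi^2)^{2k}\rangle\bigr)^{1/2}\bigl(\e\langle R(\vtau^1,\vtau^2)^{2k}\rangle\bigr)^{1/2}
\end{align*}
follows from Cauchy-Schwarz applied to $\tfrac{1}{N^{2k}}\sum_{\vec i}\langle\sigma_{i_1}\cdots\sigma_{i_{2k}}\rangle \langle\tau_{i_1}\cdots\tau_{i_{2k}}\rangle$ and then in the outer disorder expectation; combined with the new identities one should obtain a recursive estimate forcing $\limsup_N \e\langle R(\vsi,\vtau)^{2k}\rangle \leq c_j^k$. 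Markov's inequality
\begin{align*}
\e\langle I(|R(\vsi,\vtau)| > \sqrt{c_j}+\varepsilon)\rangle \leq \frac{\e\langle R(\vsi,\vtau)^{2k}\rangle}{(\sqrt{c_j}+\varepsilon)^{2k}}
\end{align*}
then yields \eqref{GG:thm01:eq1} upon sending $N \to \infty$ and then $k \to \infty$. For \eqref{GG:thm01:eq2}, a symmetric version combining the identities from $(C_1)$ and $(C_2)$, using $c_1, c_2 > 0$ from the positivity principle in the presence of nontrivial external fields, yields the product rate $(c_1 c_2)^k$.

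The main obstacle is in this third step: moving from the bare Cauchy-Schwarz bound (which yields only the trivial asymptotic rate $\max(\mbox{supp}\,\mu_P^j) \leq 1$, and hence the uninformative $|R(\vsi,\vtau)| \leq 1$) to the sharp recursion with geometric rate $c_j = \min(\mbox{supp}\,\mu_P^j)$. This requires the new family of identities to couple the cross-system and within-system overlaps in precisely the right way, so that the positivity principle (namely $R(\vsi^1,\vsi^2) \geq c_j$ with exponentially high probability) can be converted into an \emph{upper} rather than lower bound on the ratio $\e\langle R(\vsi,\vtau)^{2k+2}\rangle / \e\langle R(\vsi,\vtau)^{2k}\rangle$. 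The hypotheses $(C_j)$ and $t_p = 1$ are calibrated precisely so that the identities are rich enough for this conversion to succeed.
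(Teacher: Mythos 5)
Your high-level strategy (Ghirlanda--Guerra identities, a new family of cross-system identities, and a Cauchy--Schwartz step) is the strategy the paper advertises, but both of the places where the real work happens are either handled incorrectly or left unresolved. First, the mechanism for obtaining the identities: the paper does \emph{not} perturb the Hamiltonians. Because $t_p=1$ means the disorder is shared, Gaussian integration by parts applied to the model's own pure $2p$-spin term $X_{N,p}^1(\vsi^1)$ produces only the linear combination $\beta_{1,p}\Phi_{1,n}(f,\psi_{2p})+\beta_{2,p}\Psi_{1,n}(f,\psi_{2p})\to 0$ (this is $(\ref{GG:lem1:eq3})$), mixing the Ghirlanda--Guerra part $\Phi_{1,n}$ with the cross-system part $\Psi_{1,n}$. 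The entire point of condition $(C_1)$ is that it supplies two \emph{distinct} ratios $\beta_{2,p}/\beta_{1,p}$ --- the value $\nu$ on a M\"untz-dense set $\mathcal{A}$ and $\nu'\neq\nu$ at $p_0$ --- so that, after transporting the $\mathcal{A}$-identities to $\psi_{2p_0}$ by density, one can solve the resulting $2\times 2$ system and conclude that $\Phi_{1,n}$ and $\Psi_{1,n}$ vanish separately. Your coupled perturbation $g^0(\sigma_{i_1}\cdots\sigma_{i_{2p}}+\tau_{i_1}\cdots\tau_{i_{2p}})$ can only ever reproduce the combination $\Phi+\Psi$ and cannot separate the two families; moreover, adding any perturbation changes the Gibbs measure, so a concentration statement proved for the perturbed measure does not transfer to the model in the theorem. (The concentration of the unperturbed Hamiltonians, $\Gamma_p^j,\Delta_p^j\to0$, is available for all parameters via differentiability of the limiting free energy, which is why no perturbation is needed.)

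Second, and more seriously, the step you label ``the main obstacle'' --- converting $c_j=\min\mbox{supp}\,\mu_P^j$ into an \emph{upper} bound on $|R(\vsi,\vtau)|$ --- is precisely the content of Propositions \ref{GG:prop4} and \ref{GG:prop5}, and your proposed moment recursion is not how it is done and is not supplied. The actual mechanism is a multi-replica argument: the new identities force $|R_{\ell,\ell'}|\approx|R_{1,1}|$ for all $\ell,\ell'\leq n$; the Ghirlanda--Guerra identities, via the standard positive-probability selection of replicas (Lemma 4 of \cite{CP12}), allow one to intersect, with nonzero probability, the event $\{|R_{1,1}|\geq\sqrt{c_1'c_2'}\}$ with the events $\{|R^j_{\ell,\ell'}|\leq c_j''\}$ for all $\ell\neq\ell'$ and any $c_j''>c_j$; and then Cauchy--Schwartz applied to $N^{-1/2}n^{-1}\sum_{\ell}\vsi^\ell$ and $N^{-1/2}n^{-1}\sum_{\ell}\vtau^\ell$ gives $\sqrt{c_1'c_2'}\lesssim\sqrt{(c_1''+1/n)(c_2''+1/n)}$, a contradiction as $n\to\infty$. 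The diagonal terms $R^j_{\ell,\ell}=1$ are killed by the $1/n$ factor, which is why one needs arbitrarily many replicas rather than a two-replica moment bound; your single-pair Cauchy--Schwartz inequality $\e\left<R(\vsi,\vtau)^{2k}\right>\leq(\e\left<(R^1_{1,2})^{2k}\right>)^{1/2}(\e\left<(R^2_{1,2})^{2k}\right>)^{1/2}$ can only see $\max\mbox{supp}$, as you note, and no recursion bringing in $\min\mbox{supp}$ is derived. One further small point: the Ghirlanda--Guerra identities do not ``pin down'' the limiting overlap law as the Parisi measure; what is actually used is only that, with positive probability, the within-system overlaps can be driven down to any $c_j''>c_j$, together with the positivity principle $(\ref{intro:eq19})$ (needed for the second statement, where odd test functions must be handled).
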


\begin{theorem}
\label{GG:thm02}
Suppose that $0\leq t_{p}<1$ for some $p\in \mathcal{I}_1\cap\mathcal{I}_2.$ For $j\in\{1,2\},$ if $\mathcal{I}_j\in\mathcal{C}_0$, then $(\ref{GG:thm01:eq1})$ holds. If $\mbox{\rm Var}(h^j)\neq 0$ and $\mathcal{I}_j\in\mathcal{C}_0$ for both $j=1,2$, then we have $(\ref{GG:thm01:eq2})$.
\end{theorem}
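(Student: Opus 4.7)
The plan is to derive two families of identities for the coupled Gibbs measure and then combine them with a direct Cauchy-Schwarz inequality to transfer control from the single-system overlaps $R(\vsi^1,\vsi^2)$ and $R(\vtau^1,\vtau^2)$ to the cross overlap $R(\vsi,\vtau)$, along the lines sketched in the introduction.

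First, I would introduce the standard small generic Gaussian perturbation of each Hamiltonian $H_N^j$ by pure $2p$-spin terms indexed by $p\in\mathcal{I}_j$ and run the classical differentiation-and-concentration argument (in the spirit of Section 12.2 of \cite{Tal11}) to obtain the Ghirlanda-Guerra identities for each individual system along a sequence of perturbation parameters. Under $\mathcal{I}_j\in\mathcal{C}_0$, the M\"untz-Szasz criterion upgrades these identities to every continuous test of a single overlap within system $j$; together with Talagrand's positivity principle and Theorem \ref{sec3:thm1}, this pins down the limiting distribution of $R(\vsi^1,\vsi^2)$ in system $j$ as the Parisi measure $\mu_P^j$ with $c_j=\min\mbox{supp}\,\mu_P^j$ (possibly zero if $\e(h^j)^2=0$).

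Second, and crucially, I would exploit the hypothesis $t_{p^*}<1$ for some $p^*\in\mathcal{I}_1\cap\mathcal{I}_2$: this gives a genuinely independent Gaussian component in the pair $(\mathcal{G}^1,\mathcal{G}^2)$ at level $2p^*$. Adding a small Gaussian perturbation of $H_N^1(\vsi)+H_N^2(\vtau)$ built from exactly this independent component, and rerunning the perturb/differentiate/integrate-by-parts scheme, would yield a new family of Ghirlanda-Guerra-type identities for the coupled system, asymptotically of the form $\e\left<f R(\vsi^1,\vtau^{n+1})^{2p^*}\right>\approx\frac{1}{n}\e\left<f\right>\e\left<R(\vsi,\vtau)^{2p^*}\right>+\frac{1}{n}\sum_{\ell=2}^{n}\e\left<f R(\vsi^1,\vtau^\ell)^{2p^*}\right>$ for any bounded $f$ depending on finitely many overlaps. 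A further generic perturbation coupling $\vsi$ and $\vtau$ promotes this identity to all $p\in\mathcal{I}_1\cap\mathcal{I}_2$, and $\mathcal{I}_j\in\mathcal{C}_0$ extends it via M\"untz-Szasz to all polynomials in $R(\vsi,\vtau)$.

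Third, the analytical bridge is the termwise Cauchy-Schwarz applied to the multi-index expansion $\left<R(\vsi^1,\vtau^1)^{2k}\right>=N^{-2k}\sum_{i_1,\ldots,i_{2k}}\left<\sigma^1_{i_1}\cdots\sigma^1_{i_{2k}}\right>_1\left<\tau^1_{i_1}\cdots\tau^1_{i_{2k}}\right>_2$, which gives $\left<R(\vsi,\vtau)^{2k}\right>^2\le\left<R(\vsi^1,\vsi^2)^{2k}\right>_1\left<R(\vtau^1,\vtau^2)^{2k}\right>_2$. Combined with the two families of identities, this allows one to express any limiting joint moment of $(R(\vsi,\vtau),R(\vsi^1,\vsi^2),R(\vtau^1,\vtau^2))$ in terms of $\mu_P^1$ and $\mu_P^2$, and to conclude that the limiting mass of $R(\vsi,\vtau)$ outside $[-\sqrt{c_1c_2},\sqrt{c_1c_2}]$ must vanish, which handles the second assertion of the theorem. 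For the first assertion, where only one $\mathcal{I}_j$ lies in $\mathcal{C}_0$, the same Cauchy-Schwarz is used with the trivial upper bound $R(\vtau,\vtau)\equiv 1$ on the uncontrolled side, giving the weaker estimate $\sqrt{c_j\cdot 1}=\sqrt{c_j}$.

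The main obstacle I anticipate is Step 2: setting up the perturbation so that the coupled identities exactly reflect the partial independence encoded by $t_{p^*}<1$ and couple replicas of the form $(\vsi^1,\vtau^\ell)$ rather than within-system replicas, while preserving the concentration-of-measure estimates that underlie the Ghirlanda-Guerra derivation. A secondary difficulty is rigorously deducing the support constraint $|R(\vsi,\vtau)|\le\sqrt{c_1c_2}$ from the derived moment identities; this depends on extracting, from the single-system GG identities, the structural statement that a pure state in system $j$ has self-overlap exactly $c_j$, so that a Cauchy-Schwarz at the pure-state level caps the cross overlap by $\sqrt{c_1c_2}$.
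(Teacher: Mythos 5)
Your overall architecture (two families of identities for the coupled system, then a Cauchy--Schwartz step transferring control from $R(\vsi^1,\vsi^2)$, $R(\vtau^1,\vtau^2)$ to $R(\vsi,\vtau)$) matches the paper's, but the two steps you flag as delicate are exactly where the proposal breaks down, and the fixes you anticipate would not work.

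The main gap is the Cauchy--Schwartz step. The termwise inequality $\left<R(\vsi,\vtau)^{2k}\right>^2\le\left<R(\vsi^1,\vsi^2)^{2k}\right>\left<R(\vtau^1,\vtau^2)^{2k}\right>$ is correct, but taking $2k$-th roots and letting $k\to\infty$ it only bounds $|R(\vsi,\vtau)|$ by the geometric mean of the \emph{maxima} of the supports of the single-system overlap distributions (which in the low temperature phase are near $q_{\max}^j=\max\mbox{supp}\,\mu_P^j$, not $c_j=\min\mbox{supp}\,\mu_P^j$). No amount of combining with the identities turns this into the bound $\sqrt{c_1c_2}$, and your suggested rescue --- that ``a pure state in system $j$ has self-overlap exactly $c_j$'' --- is false (self-overlaps of pure states sit at the top of the support). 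The paper's mechanism is different: by Lemma \ref{GG:lem2}, the cross identity for a \emph{single} even power already forces $R_{\ell,\ell'}\approx R_{1,1}$ for all $\ell,\ell'$ (via $\e\left<((R_{1,1})^a-(R_{1,\ell})^a)^2\right>=-2\Psi_{1,1}(f,\psi_a)$, which uses the $\ell=1$ self term $R(\vsi^1,\vtau^1)$ in \eqref{GG:eq2} --- a term absent from the identity as you wrote it); then Cauchy--Schwartz is applied to the replica averages $N^{-1/2}(\vsi^1+\cdots+\vsi^n)$ and $N^{-1/2}(\vtau^1+\cdots+\vtau^n)$, and the Ghirlanda--Guerra identities (Lemma 4 of \cite{CP12}) supply an event of positive probability on which \emph{all} pairwise overlaps within system $j$ are $\le c_j''$ for $c_j''$ slightly above $c_j$; the diagonal contribution $n/n^2$ is killed by taking $n$ large. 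It is this multi-replica selection, not a moment inequality, that reaches $\sqrt{c_1c_2}$ (and $\sqrt{c_j\cdot 1}$ for the one-sided statement).

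A second, structural problem is your proposal to ``promote'' the cross identity to all $p\in\mathcal{I}_1\cap\mathcal{I}_2$ by adding a generic perturbation coupling $\vsi$ and $\vtau$: any such term destroys the product structure $G_N^1\times G_N^2$ that the theorem is about. The paper never perturbs; the identities come from concentration of the model's own Hamiltonian terms (Lemma \ref{GG:lem1}), which is why the hypotheses $\beta_{j,p}\neq 0$ on $\mathcal{I}_j$ and $t_p<1$ on $\mathcal{I}_1\cap\mathcal{I}_2$ enter, and the extension from one monomial $\psi_{2p}$ to all even $\psi\in C[-1,1]$ is obtained by the concentration consequence of Lemma \ref{GG:lem2}, with $\mathcal{I}_j\in\mathcal{C}_0$ and M\"untz--Szasz used only afterwards to extend the Ghirlanda--Guerra identities $\Phi_{j,n}$. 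Finally, you do not need (and cannot in general assert) that the limiting law of $R(\vsi^1,\vsi^2)$ equals $\mu_P^j$; only the positivity principle \eqref{intro:eq19} and the accessibility of overlap values near $c_j$ are used.
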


The importance of Theorems \ref{GG:thm01} and \ref{GG:thm02} lies on the fact that they allow us to exclude the discussion on the cases $|u|>\sqrt{c_j}$ when $\e(h^j)^2=0$ and $|u|>\sqrt{c_1c_2}$ when $\mbox{Var}(h^1)^2\neq 0$, $\mbox{Var}(h^2)^2\neq 0$ in the control of the coupled free energy $p_{N,u}$ using Guerra's bound, which are technically very hard to deal with. Our approach to Theorems \ref{GG:thm01} and \ref{GG:thm02} is intimately motivated by \cite{CP12}. As we have explained in Section $1$, we will derive the Ghirlanda-Guerra identities as well as a new family of identities for the overlaps in the coupled system to control the cross overlap $R(\vsi,\vtau)$ using the overlaps $R(\vsi^1,\vsi^2)$ and $R(\vtau^1,\vtau^2)$ from each individual system. Let us remark that in the statements of Theorems $\ref{GG:thm01}$ and $\ref{GG:thm02}$, the first parts $(\ref{GG:thm01:eq1})$ have been considered in Theorems 3, 4 \cite{CP12}, while the second parts $(\ref{GG:thm01:eq2})$ are new that strongly rely on the positivity of the overlap $(\ref{intro:eq19}).$

\subsection{Identities for the coupled system}
Given replicas $(\vsi^\ell,\vtau^\ell)_{\ell\geq 1},$ let us denote by
\begin{align*}
R_{\ell,\ell'}^1=R(\vsi^\ell,\vsi^{\ell'}),\,\,R_{\ell,\ell'}^2=R(\vtau^\ell,\vtau^{\ell'}),\,\,R_{\ell,\ell'}=R(\vsi^\ell,\vtau^{\ell'}),
\end{align*}
the overlaps within each system and between the two systems. For any bounded function $f$ depending only on the overlaps $(R_{\ell,\ell'}^1)_{\ell,\ell'\leq n}$, $(R_{\ell,\ell'}^2)_{\ell,\ell'\leq n},$ and $(R_{\ell,\ell'})_{\ell,\ell'\leq n}$ and any $\psi\in C[-1,1],$ we define
\begin{align}
\begin{split}
\label{GG:eq1}
\Phi_{1,n}(f,\psi)&=\e\left<f\psi(R_{1,n+1}^1)\right>-\frac{1}{n}\e\left<f\right>\e\left<\psi(R_{1,2}^1)\right>-\frac{1}{n}\sum_{\ell=2}^n\e\left<f\psi(R_{1,\ell}^1)\right>,
\end{split}\\
\begin{split}
\label{GG:eq2}
\Psi_{1,n}(f,\psi)&=\e\left<f\psi(R_{1,n+1})\right>-\frac{1}{n}\sum_{\ell=1}^n \e\left<f\psi(R_{1,\ell})\right>,
\end{split}\\
\begin{split}
\label{GG:eq3}
\Phi_{2,n}(f,\psi)&=\e\left<f\psi(R_{1,n+1}^2)\right>-\frac{1}{n}\e\left<f\right>\e\left<\psi(R_{1,2}^2)\right>-\frac{1}{n}\sum_{\ell=2}^n\e\left<f\psi(R_{1,\ell}^2)\right>,
\end{split}\\
\begin{split}
\label{GG:eq4}
\Psi_{2,n}(f,\psi)&=\e\left<f\psi(R_{n+1,1})\right>-\frac{1}{n}\sum_{\ell=1}^n \e\left<f\psi(R_{\ell,1})\right>.
\end{split}
\end{align} 
In what follows, we will prove that these four quantities converge to zero as $N$ tends to infinity for either all even $\psi\in C[-1,1]$ or all $\psi\in C[-1,1]$ depending on the parameters of the models. Equations $(\ref{GG:eq1})$ and $(\ref{GG:eq3})$ will yield the familiar Ghirlanda-Guerra identities \cite{GG98}, only now the function $f$ may depend on the overlaps of the two systems. As for equations $(\ref{GG:eq2})$ and $(\ref{GG:eq4})$, they will provide additional information about how two systems interact with each other. We will use the notation throughout the section:
$$
\psi_{a}(x)=x^{a}.
$$
Write $(h^1,h^2)=(x^1+s_1g^1,x^2+s_2g^2)$, where $x^1,x^2\in\mathbb{R}$, $s_1=\mbox{Var}(h^1)^{1/2},s_2=\mbox{Var}(h^2)^{1/2}$, and $g^1,g^2$ are jointly centered Gaussian with $\e(g^1)^2=\e(g^2)^2=1$ and $\e g^1g^2=t$ for some $t\in [-1,1].$ 
The following lemma is the key to establish the asserted identities for the coupled system.

\begin{lemma}\label{GG:lem1} For all $p\in \mathbb{N},$ 
\begin{align}
\begin{split}\label{GG:lem1:eq1}
&\lim_{N\rightarrow\infty}\sup_{\|f\|_\infty\leq 1}|\beta_{2,p}\sqrt{1-t_{p}}\Psi_{1,n}(f,\psi_{2p})|=0,
\end{split}\\
\begin{split}\label{GG:lem1:eq2}
&\lim_{N\rightarrow\infty}\sup_{\|f\|_\infty\leq 1}|\beta_{1,p}\sqrt{1-t_{p}}\Psi_{2,n}(f,\psi_{2p})|=0,
\end{split}\\
\begin{split}\label{GG:lem1:eq3}
&\lim_{N\rightarrow\infty}\sup_{\|f\|_\infty\leq 1}|\beta_{1,p}\Phi_{1,n}(f,\psi_{2p})+\beta_{2,p}t_{p}\Psi_{1,n}(f,\psi_{2p})|=0,
\end{split}\\
\begin{split}\label{GG:lem1:eq4}
&\lim_{N\rightarrow\infty}\sup_{\|f\|_\infty\leq 1}|\beta_{2,p}\Phi_{2,n}(f,\psi_{2p})+\beta_{1,p}t_{p}\Psi_{2,n}(f,\psi_{2p})|=0.
\end{split}
\end{align}
We also have
\begin{align}
\begin{split}\label{GG:lem1:eq5}
&\lim_{N\rightarrow\infty}\sup_{\|f\|_\infty\leq 1}|s_2\sqrt{1-|t|}\Psi_{1,n}(f,\psi_1)|=0,
\end{split}\\
\begin{split}\label{GG:lem1:eq6}
&\lim_{N\rightarrow\infty}\sup_{\|f\|_\infty\leq 1}|s_1\sqrt{1-|t|}\Psi_{2,n}(f,\psi_{1})|=0,
\end{split}\\
\begin{split}\label{GG:lem1:eq7}
&\lim_{N\rightarrow\infty}\sup_{\|f\|_\infty\leq 1}|s_1\Phi_{1,n}(f,\psi_{1})+s_2t\Psi_{1,n}(f,\psi_{1})|=0,
\end{split}\\
\begin{split}\label{GG:lem1:eq8}
&\lim_{N\rightarrow\infty}\sup_{\|f\|_\infty\leq 1}|s_2\Phi_{2,n}(f,\psi_{1})+s_1t\Psi_{2,n}(f,\psi_{1})|=0.
\end{split}
\end{align}
\end{lemma}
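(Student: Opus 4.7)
The proof follows the Ghirlanda--Guerra strategy in the coupled-systems form developed in \cite{CP12}. Two inputs are needed: Gaussian concentration of the perturbed coupled free energy, and Gaussian integration by parts (IBP) in the joint disorder $(\mathcal{G}^1,\mathcal{G}^2,(h_i^1,h_i^2)_{i\le N})$. For any Gaussian functional $Y(\vsi,\vtau)$ with $\e Y^2=O(N)$ uniformly on $\Sigma_N\times\Sigma_N$, add the perturbation $\lambda Y(\vsi,\vtau)$ to $H_N^1(\vsi)+H_N^2(\vtau)$: the perturbed free energy is convex in $\lambda$ and has variance $O(\lambda^2/N)$ on compacts. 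Convexity combined with Gaussian concentration (integrating $\lambda$ over a vanishing interval, in the spirit of Chapter~3 of \cite{Tal11}) yields, for a suitably chosen sequence $\lambda=\lambda_N\downarrow 0$,
\begin{align*}
\frac{1}{N}\Bigl|\e\langle Y(\vsi,\vtau)f\rangle_\lambda-\e\langle Y(\vsi,\vtau)\rangle_\lambda\cdot\e\langle f\rangle_\lambda\Bigr|\longrightarrow 0
\end{align*}
uniformly over $\|f\|_\infty\le 1$, and the identity derived from this display passes to the unperturbed measure by continuity in $\lambda$.

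For \eqref{GG:lem1:eq3}, take $Y=X_{N,p}^1(\vsi)$. The covariance relations $\e X_{N,p}^1(\vsi^a)X_{N,p}^1(\vsi^b)=N(R_{a,b}^1)^{2p}$ and $\e X_{N,p}^1(\vsi^a)X_{N,p}^2(\vtau^b)=Nt_pR_{a,b}^{2p}$ combined with IBP rewrite the above display as
\begin{align*}
\Bigl|\beta_{1,p}\Phi_{1,n}(f,\psi_{2p})+\beta_{2,p}t_p\Psi_{1,n}(f,\psi_{2p})-\tfrac{1}{n}\beta_{2,p}t_p\e\langle f\rangle\bigl(\e\langle R_{1,2}^{2p}\rangle-\e\langle R_{1,1}^{2p}\rangle\bigr)\Bigr|\longrightarrow 0.
\end{align*}
The residual term vanishes because the replicas $\vtau^1$ and $\vtau^2$ are exchangeable under $G_N^1\times G_N^2$ (a product given the disorder), whence $\e\langle R_{1,1}^{2p}\rangle=\e\langle R_{1,2}^{2p}\rangle$. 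This proves \eqref{GG:lem1:eq3}, and \eqref{GG:lem1:eq4} follows by the symmetric choice $Y=X_{N,p}^2(\vtau)$.

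For \eqref{GG:lem1:eq1}, assuming $t_p<1$ (else the claim is trivial), introduce the decorrelated Gaussian process
\begin{align*}
X_{N,p}^3(\vsi):=\frac{1}{\sqrt{1-t_p^2}}\bigl(X_{N,p}^2(\vsi)-t_pX_{N,p}^1(\vsi)\bigr),
\end{align*}
where $X_{N,p}^2(\vsi):=\tfrac{1}{N^{p-1/2}}\sum_{\mathbf i}g_{\mathbf i}^2\vsi_{\mathbf i}$. A routine computation gives $\e X_{N,p}^3(\vsi^a)X_{N,p}^3(\vsi^b)=N(R^1_{a,b})^{2p}$, $\e X_{N,p}^3(\vsi^a)X_{N,p}^1(\vsi^b)=0$, and $\e X_{N,p}^3(\vsi^a)X_{N,p}^2(\vtau^b)=N\sqrt{1-t_p^2}R_{a,b}^{2p}$. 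Applying the scheme above with $Y=X_{N,p}^3(\vsi)$: the $\beta_{1,p}$-contribution is absent by independence, and the exchangeability cancellation is identical, leaving $\beta_{2,p}\sqrt{1-t_p^2}\Psi_{1,n}(f,\psi_{2p})\to 0$. Since $\sqrt{1-t_p}\le\sqrt{1-t_p^2}$ for $t_p\in[0,1]$, this gives \eqref{GG:lem1:eq1}; \eqref{GG:lem1:eq2} follows by the symmetric construction on $\vtau$-replicas. The four field identities \eqref{GG:lem1:eq5}--\eqref{GG:lem1:eq8} are established identically, with the pure $p$-spin processes $X_{N,p}^j$ replaced by the random-field terms $s_1\sum_i g_i^1\sigma_i$ and $s_2\sum_i g_i^2\tau_i$ (and the appropriate decorrelated analog for the off-diagonal cases), and with $(\beta_{1,p},\beta_{2,p},t_p)$ replaced by $(s_1,s_2,t)$.

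The only genuine technical hurdle is the synchronization of the perturbation scale $\lambda_N\downarrow 0$ with the $O(\lambda^2/N)$ concentration rate so that the identities derived at level $\lambda_N$ survive the passage to the unperturbed Gibbs measure in the limit $N\to\infty$. This is the standard averaging device of Panchenko, whose execution is essentially identical to that of \cite{CP12}; I would import it with no modification.
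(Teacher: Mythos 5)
Your integration-by-parts computations are sound: the residual term you identify in the derivation of \eqref{GG:lem1:eq3} does vanish because $(\vsi^1,\vtau^1)$ and $(\vsi^1,\vtau^2)$ are equidistributed under $G_N^1\times G_N^2$, and your orthogonalized process $X_{N,p}^3$ is a legitimate variant of the decomposition the paper actually uses (writing $X_{N,p}^1,X_{N,p}^2$ in distribution as $\sqrt{t_p}X_{N,p}+\sqrt{1-t_p}Z_{N,p}^j$ with $X_{N,p},Z_{N,p}^1,Z_{N,p}^2$ independent); the extra factor $\sqrt{1-t_p^2}\geq\sqrt{1-t_p}$ is harmless, as you note. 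The gap is in the analytic input. The paper, following Lemma 1 of \cite{CP12}, needs concentration of the Hamiltonian $N^{-1}\e\langle|X_{N,p}^1(\vsi^1)-\e\langle X_{N,p}^1(\vsi^1)\rangle|\rangle\to 0$ \emph{at the actual parameters}, and gets it from convexity of $N^{-1}\e\log Z_N^1$ in $\beta_{1,p}$, convergence to the Parisi formula, and Panchenko's theorem \cite{Pan11:2} that the Parisi formula is differentiable in $\beta_{1,p}$; no perturbation of the Hamiltonian appears. Your substitute --- add $\lambda Y$, average over a vanishing interval $[0,\delta_N]$, then return to $\lambda=0$ ``by continuity'' --- does not close. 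The averaging bound forces $N\delta_N\to\infty$, and it controls the thermal fluctuation only on average: $\delta_N^{-1}\int_0^{\delta_N}\e\langle(Y-\langle Y\rangle)^2\rangle_\lambda\,d\lambda=O(N/\delta_N)$, so at a good point $\lambda_N\in[0,\delta_N]$ one only knows $\e\langle|Y-\langle Y\rangle|\rangle_{\lambda_N}=O(\sqrt{N/\delta_N})$. The continuity modulus of $\e\langle g\rangle_\lambda$ in $\lambda$ is governed by exactly this quantity, so the transfer error back to $\lambda=0$ is of order $\lambda_N\sqrt{N/\delta_N}\leq\sqrt{N\delta_N}$, which diverges precisely when the averaging bound converges. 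Bounding $\e\langle|Y-\langle Y\rangle|\rangle_\lambda$ uniformly on $[0,\delta_N]$ by $O(\sqrt{N})$ would fix this, but that bound \emph{is} the concentration you are trying to prove. So the ``standard averaging device'' you propose to import ``with no modification'' is not what \cite{CP12} does, and as written this step is a genuine gap; the repair is to replace it by the convexity-plus-differentiability argument of Lemma 1 in \cite{CP12}.

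A secondary observation: for \eqref{GG:lem1:eq1}, \eqref{GG:lem1:eq2}, \eqref{GG:lem1:eq5}, \eqref{GG:lem1:eq6} no concentration machinery of any kind is needed. Your process $X_{N,p}^3$ (like the paper's $Z_{N,p}^2$) is independent of the disorder defining $G_N^1$, and the marginal law of $\vsi^1$ under $(G_N^1\times G_N^2)^{\otimes\infty}$ is $G_N^1$; hence by Fubini $\e\langle|X_{N,p}^3(\vsi^1)|\rangle=O(\sqrt{N})$ and $\e\langle X_{N,p}^3(\vsi^1)\rangle=0$ outright, so dividing the integration-by-parts identity by $N$ already gives the claim. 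Routing these four identities through the perturbation scheme is not only unnecessary but imports the same unresolved step.
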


\begin{proof} Our proof basically follows the same argument as Lemma 2 \cite{CP12}. Let $X_{N,p}^1(\vsi)$ and $X_{N,p}^2(\vtau)$ be the pure $2p$-spin Hamiltonian in $X_N^1(\vsi)$ and $X_N^2(\vtau).$ They are equal in distribution to the pair
$$
\sqrt{t_{p}}X_{N,p}(\vsi)+\sqrt{1-t_{p}}Z_{N,p}^1(\vsi)\,\,\mbox{and}\,\,\sqrt{t_{p}}X_{N,p}(\vtau)+\sqrt{1-t_{p}}Z_{N,p}^2(\vtau),
$$ 
where we denote by $X_{N,p}$, $Z_{N,p}^1$, $Z_{N,p}^2$ three independent copies of $(\ref{intro:eq17}).$ The derivation of $(\ref{GG:lem1:eq1})-(\ref{GG:lem1:eq4})$ is based on the concentration of the Hamiltonians (see Lemma 1 \cite{CP12} and also Chapter 12 \cite{Tal11}): as $N\rightarrow \infty$,
\begin{align}\label{GG:lem1:proof:eq1}
\Delta_{p}^1,\Delta_{p}^2,\Gamma_{p}^1,\Gamma_{p}^2\rightarrow 0,
\end{align} 
where
\begin{align*}
\Delta_{p}^1&=N^{-1}\e\left<\left|Z_{N,p}^2(\vsi^1)-\e\left<Z_{N,p}^2(\vsi^1)\right>\right|\right>,\\
\Delta_{p}^2&=N^{-1}\e\left<\left|Z_{N,p}^1(\vtau^1)-\e\left<Z_{N,p}^1(\vtau^1)\right>\right|\right>,\\
\Gamma_{p}^1&=N^{-1}\e\left<\left|X_{N,p}^1(\vsi^1)-\e\left<X_{N,p}^1(\vsi^1)\right>\right|\right>,\\
\Gamma_{p}^2&=N^{-1}\e\left<\left|X_{N,p}^2(\vtau^1)-\e\left<X_{N,p}^2(\vtau^1)\right>\right|\right>.
\end{align*}
For $f$ with $\|f\|_\infty\leq 1$, one may see clearly
\begin{align}
\begin{split}
\label{GG:lem1:proof:eq2}
&N^{-1}\left|\e\left<Z_{N,p}^2(\vsi^1)f\right>-\e\left<Z_{N,p}^2(\vsi^1)\right>\e\left<f\right>\right|\leq \Delta_{p}^1,\\
&N^{-1}\left|\e\left<Z_{N,p}^1(\vtau^1)f\right>-\e\left<Z_{N,p}^1(\vtau^1)\right>\e\left<f\right>\right|\leq \Delta_{p}^2,\\
&N^{-1}\left|\e\left<X_{N,p}^1(\vsi^1)f\right>-\e\left<X_{N,p}^1(\vsi^1)\right>\e\left<f\right>\right|\leq \Gamma_{p}^1,\\
&N^{-1}\left|\e\left<X_{N,p}^2(\vtau^1)f\right>-\e\left<X_{N,p}^2(\vtau^1)\right>\e\left<f\right>\right|\leq \Gamma_{p}^2.
\end{split}
\end{align}
A simple application of the Gaussian integration by parts to each term of the left-hand side of $(\ref{GG:lem1:proof:eq2})$ together with $(\ref{GG:lem1:proof:eq1})$ yields $(\ref{GG:lem1:eq1})$, $(\ref{GG:lem1:eq2})$, $(\ref{GG:lem1:eq3})$, and $(\ref{GG:lem1:eq4}).$ One may refer to Lemma 2 \cite{CP12} for detail. Similarly, since $h^1,h^2$ are jointly Gaussian, $(\ref{GG:lem1:eq5})$, $(\ref{GG:lem1:eq6})$, $(\ref{GG:lem1:eq7})$, and $(\ref{GG:lem1:eq8})$ can also be treated by applying the same argument as above to the external fields.
\end{proof}

We will need the following lemma.

\begin{lemma}
\label{GG:lem2}
Let $j\in\{1,2\}.$ Suppose that
\begin{align}
\label{GG:lem2:eq1}
\lim_{N\rightarrow\infty}\sup_{\|f\|_\infty\leq 1}|\Psi_{j,n}(f,\psi)|=0
\end{align}
holds with $\psi=\psi_{a}$ for some $a\geq 1.$ If $a\in 2\mathbb{N}$, then $(\ref{GG:lem2:eq1})$ also holds for all even $\psi\in C[-1,1];$ if $a=1,$ then $(\ref{GG:lem2:eq1})$ holds for all $\psi\in C[-1,1].$ 
\end{lemma}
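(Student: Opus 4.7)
By linearity of $\Psi_{j,n}(f,\cdot)$ in $\psi$ and the elementary bound $|\Psi_{j,n}(f,\psi)|\leq 2\|f\|_\infty\|\psi\|_\infty$, the set
$$
B := \Bigl\{\psi\in C[-1,1]\,:\,\lim_{N\to\infty}\sup_{\|f\|_\infty\leq 1}|\Psi_{j,n}(f,\psi)|=0\ \text{for every }n\geq 1\Bigr\}
$$
is a closed linear subspace of $(C[-1,1],\|\cdot\|_\infty)$. The constant $\psi_0\equiv 1$ lies in $B$ trivially (both terms of the identity reduce to $\e\left<f\right>$), and $\psi_a\in B$ by hypothesis. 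The task reduces to showing that $B$ contains every monomial $\psi_{ka}$ for $k\geq 1$, after which density will finish the argument.

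The central step is an inductive bootstrap from $\psi_a\in B$ to $\psi_{ka}\in B$. Assuming $\psi_a,\psi_{2a},\ldots,\psi_{ka}\in B$, the plan is to apply the assumed identity $\Psi_{j,n+k}(\tilde f,\psi_a)=o(1)$, with $g$ an arbitrary bounded function of the overlaps with indices in $\{1,\ldots,n\}$, to the test function
$$
\tilde f \;=\; g\,\prod_{i=1}^{k}\psi_a(R_{1,n+i}),
$$
which is bounded since $|R_{1,n+i}|\leq 1$. The left-hand side is $\e\left<g\prod_{i=1}^{k+1}\psi_a(R_{1,n+i})\right>$. Splitting the sum on the right-hand side according to whether the dummy index $\ell$ lies in $\{1,\ldots,n\}$ or in $\{n+1,\ldots,n+k\}$, then iteratively substituting the already-established identities $\psi_{\ell a}\in B$ (for $1\leq \ell\leq k$) to dispose of those cross terms in which some new-replica index $n+i$ coincides with the running index, one obtains after rearrangement an asymptotic identity of the form $\Psi_{j,n}(g,\psi_{(k+1)a})=o(1)$ uniformly in $\|g\|_\infty\leq 1$, closing the induction.

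With $\psi_{ka}\in B$ for every $k\geq 0$, the conclusion follows by classical approximation. If $a=1$, the Weierstrass theorem gives density of the polynomials in $C[-1,1]$, forcing $B=C[-1,1]$. If $a=2b$ is a positive even integer, the substitution $y=x^2$ is an isometric bijection between the even part of $C[-1,1]$ and $C[0,1]$ sending $\psi_{2bk}$ to $y^{bk}$; since $\sum_{k\geq 1}1/(bk)=\infty$, the M\"untz--Szasz theorem (Theorem 15.26 in Rudin, already invoked in Section 1) yields density of $\{y^{bk}\}_{k\geq 0}$ in $C[0,1]$, and hence of $\{\psi_{2bk}\}_{k\geq 0}$ in the even continuous functions on $[-1,1]$.

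The main obstacle is the combinatorial bookkeeping in the inductive step. Iterating the $\psi_a$-identity to remove the factors $\psi_a(R_{1,n+i})$ generates a growing tree of terms in which the new-replica indices may partially coincide, each producing a product of $\psi_{\ell a}$ for various $1\leq\ell\leq k+1$ evaluated at cross overlaps and weighted by factors $1/(n+i)$. To isolate the clean identity $\Psi_{j,n}(g,\psi_{(k+1)a})=o(1)$ with the error uniform over all $\|g\|_\infty\leq 1$, each such cross term must be carefully matched against the identities $\psi_a,\ldots,\psi_{ka}\in B$ provided by the induction hypothesis, while uniformity in $g$ is preserved throughout.
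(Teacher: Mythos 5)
The decisive step of your argument --- the inductive bootstrap from $\psi_a,\ldots,\psi_{ka}\in B$ to $\psi_{(k+1)a}\in B$ --- is not actually carried out; it is described as a plan and you yourself flag the ``combinatorial bookkeeping'' as the main obstacle. This is where the proof lives, and the plan as stated faces a real structural difficulty: applying $\Psi_{j,n+k}(\tilde f,\psi_a)=o(1)$ with $\tilde f=g\prod_{i=1}^k\psi_a(R_{1,n+i})$ produces mixed moments $\e\left<g\prod_{i=1}^{k+1}\psi_a(R_{1,n+i})\right>$ and cross terms $\e\left<g\prod_i\psi_a(R_{1,n+i})\,\psi_a(R_{1,\ell})\right>$, i.e.\ products of $\psi_a$ evaluated at \emph{distinct} replicas. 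The target identity $\Psi_{j,n}(g,\psi_{(k+1)a})=o(1)$ instead involves $(k{+}1)a$-th powers of a \emph{single} overlap, $\e\left<g\,(R_{1,n+1})^{(k+1)a}\right>$. Converting one into the other requires knowing that $R_{1,n+1}\approx R_{1,n+2}\approx\cdots$, which is precisely the concentration statement you have not established; the induction hypothesis only supplies identities for single-overlap monomials and does not dispose of these multi-replica products. So the induction does not close as written, and the proof is incomplete at its core.

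The paper avoids all of this with a one-line observation that you should adopt: apply the hypothesis once, with $n=1$ and the single test function $f=(R_{1,1})^a$. By symmetry between replicas,
\begin{align*}
\e\left<\bigl((R_{1,1})^a-(R_{1,\ell})^a\bigr)^2\right>=2\e\left<(R_{1,1})^{2a}\right>-2\e\left<(R_{1,1})^a(R_{1,2})^a\right>=-2\Psi_{1,1}(f,\psi_a)\longrightarrow 0,
\end{align*}
so $(R_{1,1})^a\approx(R_{1,\ell})^a$ in $L^2(\e\left<\cdot\right>)$ for every $\ell$. When $a\in 2\mathbb{N}$, the elementary bound $|x-y|^a\leq|x^a-y^a|$ for $x,y\geq 0$ upgrades this to $\e\left<\,||R_{1,1}|-|R_{1,\ell}||\,\right>\to 0$, and when $a=1$ one gets $R_{1,1}\approx R_{1,\ell}$ directly. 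Once all the cross overlaps $R_{1,\ell}$ are asymptotically equal (in absolute value, resp.\ exactly), the identity $\Psi_{j,n}(f,\psi)=o(1)$ uniformly in $\|f\|_\infty\leq 1$ is immediate for every even (resp.\ every) $\psi\in C[-1,1]$ by uniform continuity --- no closed-subspace reduction, no induction on powers, and no M\"untz--Szasz or Weierstrass density argument is needed. Your soft preliminaries (that $B$ is a closed subspace containing the constants, and the density facts) are correct but end up being unnecessary once the concentration is extracted.
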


\begin{proof}
It suffices to consider $j=1.$ Observe that for $\ell\geq 2,$ using symmetry between replicas yields
\begin{align}\label{GG:lem2:proof:eq1}
\e\left<((R_{1,1})^a-(R_{1,\ell})^a)^2\right>&=2\e\left<(R_{1,1})^{2a}\right>-2\e\left<(R_{1,1})^{a}(R_{1,2})^{a}\right>=-2\Psi_{1,1}(f,\psi_a)
\end{align} 
by definition of $\Psi_{1,n}$ in $(\ref{GG:eq2})$ with $n=1$ and $f=(R_{1,1})^a.$ If $a\in 2\mathbb{N},$ using $|x-y|^a\leq |x^a-y^a|$ for all $x,y\geq 0$ and $(\ref{GG:lem2:proof:eq1})$, we can write
\begin{align*}
\e\left<||R_{1,1}|-|R_{1,\ell}||\right>&\leq \left(\e\left<||R_{1,1}|-|R_{1,\ell}||^{2a}\right>\right)^{1/2a}\\
&\leq \left(\e\left<((R_{1,1})^a-(R_{1,\ell})^a)^2\right>\right)^{1/2a}=(-2\Psi_{1,1}(f,\psi_a))^{1/2a}.
\end{align*}
Since $(\ref{GG:lem2:eq1})$ holds for $\psi_a,$ this inequality implies that $|R_{1,\ell}|\approx |R_{1,1}|$ for all $\ell\geq 2$ and clearly $(\ref{GG:lem2:eq1})$ holds for all even $\psi \in C[-1,1].$ If $(\ref{GG:lem2:eq1})$ holds for $\psi_1,$ then $(\ref{GG:lem2:proof:eq1})$ implies $R_{1,1}\approx R_{1,\ell}$ for all $\ell\geq 1$ and so $(\ref{GG:lem2:eq1})$ holds for all $\psi\in C[-1,1].$ This completes our proof.
\end{proof}

Recall the positivity of the overlap $(\ref{intro:eq19})$ that if $\e(h^j)^2\neq 0,$ one may pass to limit to see
\begin{align}
\begin{split}\label{GG:prop3:proof:eq4}
\lim_{N\rightarrow\infty}\e\left<I(R_{1,\ell}^j\geq 0,\,\forall 1\leq \ell\leq n)\right>=1,\,\forall n\geq 1.
\end{split}
\end{align}
We continue to state two useful propositions that will need the help of Lemmas $\ref{GG:lem1}$, $\ref{GG:lem2}$, and $(\ref{GG:prop3:proof:eq4})$ under additional assumptions on the parameters of the models. 
 
\begin{proposition}
\label{GG:prop3}
Suppose that $t_p=1$ for all $p\geq 1.$ For $j\in\{1,2\},$ if $\mathcal{I}_j$ satisfies $(C_j),$ then $(\ref{GG:lem2:eq1})$ and
\begin{align}
\label{GG:prop3:eq1}
\lim_{N\rightarrow\infty}\sup_{\|f\|_\infty\leq 1}|\Phi_{j,n}(f,\psi)|=0
\end{align}
hold for all even $\psi\in C[-1,1]$. If $\mathcal{I}_j$ satisfies $(C_j)$ and $\mbox{\rm Var}(h^j)\neq 0$ for both $j=1,2,$ then $(\ref{GG:lem2:eq1})$ holds for both $j=1,2$ and all $\psi\in C[-1,1].$
\end{proposition}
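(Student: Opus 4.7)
The plan is to exploit Lemma~\ref{GG:lem1} under the collapse $t_p = 1$, combined with the M\"untz--Szasz density built into condition $(C_j)$. Since $(\ref{GG:lem1:eq1})$--$(\ref{GG:lem1:eq2})$ become vacuous, the useful relations are $(\ref{GG:lem1:eq3})$--$(\ref{GG:lem1:eq4})$: taking $p \in \mathcal{A}$ in $(\ref{GG:lem1:eq3})$, substituting $\beta_{2,p} = \nu\beta_{1,p}$, and dividing by $\beta_{1,p} \neq 0$ yields
\begin{equation*}
\Phi_{1,n}(f,\psi_{2p}) + \nu\Psi_{1,n}(f,\psi_{2p}) \to 0
\end{equation*}
uniformly in $\|f\|_\infty \leq 1$. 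Both functionals are linear in $\psi$, bounded by $2\|f\|_\infty\|\psi\|_\infty$, and annihilate constants; since $\mathcal{A} \in \mathcal{C}_0$, the linear span of $\{1\} \cup \{x^{2p}\}_{p \in \mathcal{A}}$ is dense in $C[0,1]$, which, upon identifying even functions on $[-1,1]$ with their restriction to $[0,1]$, covers every even $\psi \in C[-1,1]$. A standard $\varepsilon$-approximation then extends the display above to all such $\psi$.

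To separate $\Phi_{1,n}$ from $\Psi_{1,n}$ I would specialize the extended identity to $\psi = \psi_{2p_0}$ and subtract it from $(\ref{GG:lem1:eq3})$ at $p = p_0$, which leaves $(\beta_{2,p_0} - \nu\beta_{1,p_0})\Psi_{1,n}(f,\psi_{2p_0}) \to 0$; the hypothesis $\beta_{2,p_0} \neq \nu\beta_{1,p_0}$ gives $\Psi_{1,n}(f,\psi_{2p_0}) \to 0$. Lemma~\ref{GG:lem2} with $a = 2p_0 \in 2\mathbb{N}$ lifts this to $(\ref{GG:lem2:eq1})$ for all even $\psi$, and plugging back into the extended identity yields $(\ref{GG:prop3:eq1})$ for all even $\psi$. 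The case $j = 2$ is handled symmetrically via $(C_2)$ and $(\ref{GG:lem1:eq4})$.

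For the second assertion I would split on the correlation $t$ of $(h^1,h^2)$. If $|t| < 1$, then $(\ref{GG:lem1:eq5})$--$(\ref{GG:lem1:eq6})$ together with $s_1,s_2 > 0$ immediately give $\Psi_{j,n}(f,\psi_1) \to 0$ for both $j$, and Lemma~\ref{GG:lem2} with $a = 1$ lifts this to all $\psi \in C[-1,1]$. If $|t| = 1$, those relations are trivial, so the route is to first prove $\Phi_{1,n}(f,\psi_1) \to 0$ by comparing $\psi_1$ with the even function $x \mapsto |x|$. Since $\mathrm{Var}(h^1) \neq 0$, the overlap positivity $(\ref{GG:prop3:proof:eq4})$ forces $\e\langle I(R_{1,\ell}^1 < 0)\rangle \to 0$ for each $\ell$, and because $|x - |x|| \leq 2 I(x < 0)$ on $[-1,1]$, every term in $\Phi_{1,n}(f,\psi_1) - \Phi_{1,n}(f,|\cdot|)$ is bounded by $2\|f\|_\infty\e\langle I(R_{1,\ell}^1 < 0)\rangle \to 0$; the first assertion already gives $\Phi_{1,n}(f,|\cdot|) \to 0$, so $\Phi_{1,n}(f,\psi_1) \to 0$. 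Equation $(\ref{GG:lem1:eq7})$ then reads $s_2 t\,\Psi_{1,n}(f,\psi_1) \to 0$, and $s_2 > 0$, $|t| = 1$ force $\Psi_{1,n}(f,\psi_1) \to 0$; Lemma~\ref{GG:lem2} and the symmetric argument for $j = 2$ conclude.

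The main obstacle is precisely the $|t| = 1$ subcase: when the external fields are perfectly (anti)correlated, the orthogonal Gaussian decomposition no longer separates $\Phi_{1,n}(f,\psi_1)$ from $\Psi_{1,n}(f,\psi_1)$, and one must route the argument through the internal Ghirlanda--Guerra identity and invoke the single-system overlap positivity, which is why both $\mathrm{Var}(h^1)$ and $\mathrm{Var}(h^2)$ need to be nonzero.
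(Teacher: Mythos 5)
Your proof is correct and follows essentially the same route as the paper: Lemma~\ref{GG:lem1} with $t_p=1$, the M\"untz--Szasz density from $(C_j)$ to separate $\Phi_{j,n}$ from $\Psi_{j,n}$ via the mismatch at $p_0$, Lemma~\ref{GG:lem2} to upgrade from $\psi_{2p_0}$ (resp.\ $\psi_1$), and the decomposition of $\psi_1$ into $|x|$ plus a term controlled by overlap positivity in the $|t|=1$ case. The only cosmetic difference is that you extend the $\nu$-relation to all even $\psi$ before specializing to $\psi_{2p_0}$, whereas the paper approximates $\psi_{2p_0}$ directly by the span of $\{1\}\cup\{\psi_{2p}\}_{p\in\mathcal{A}}$; these are the same argument.
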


\begin{proof}
To prove the first assertion, it suffices to consider $j=1.$ Since $\mathcal{I}_1$ satisfies condition $(C_1),$ there exist $\mathcal{A}\subseteq\mathcal{I}_1$ with $\mathcal{A}\in\mathcal{C}_0$, $p_0\in\mathcal{I}_1\setminus \mathcal{A},$ and $\nu\in\mathbb{R}$ such that $\beta_{2,p}=\nu\beta_{1,p}$ for all $p\in\mathcal{A}$ and $\beta_{2,p_0}\neq \nu\beta_{1,p_0}.$ Since $\beta_{1,p_0}\neq 0,$ $\nu':=\beta_{2,p_0}/\beta_{1,p_0}\neq \nu.$ From $(\ref{GG:lem1:eq3}),$ we have that
\begin{align}\label{GG:prop3:proof:eq1}
\lim_{N\rightarrow\infty}\sup_{\|f\|_\infty\leq 1}|\Phi_{1,n}(f,\psi_{2p_0})+\nu'\Psi_{1,n}(f,\psi_{2p_0})|=0
\end{align}
and that using $\beta_{2,p}=\nu\beta_{1,p}$ and $\beta_{1,p}\neq 0$ for all $p\in\mathcal{A}$,
\begin{align}\label{GG:prop3:proof:eq3}
\lim_{N\rightarrow\infty}\sup_{\|f\|_\infty\leq 1}|\Phi_{1,n}(f,\psi_{2p})+\nu\Psi_{1,n}(f,\psi_{2p})|=0.
\end{align}
Since $\mathcal{A}\in\mathcal{C}_0$ and $\psi_{2p_0}$ is even, we can approximate $\psi_{2p_0}$ uniformly by the linear combination of $1$ and $\psi_{2p}$ for $p\in\mathcal{A}$ to obtain
\begin{align}
\label{GG:prop3:proof:eq2}
\lim_{N\rightarrow\infty}\sup_{\|f\|_\infty\leq 1}|\Phi_{1,n}(f,\psi_{2p_0})+\nu\Psi_{1,n}(f,\psi_{2p_0})|=0.
\end{align}
From $\nu\neq \nu',$ $(\ref{GG:prop3:proof:eq1})$ and $(\ref{GG:prop3:proof:eq2})$ imply that $(\ref{GG:lem2:eq1})$ holds for $\psi_{2p_0}$ and from Lemma $\ref{GG:lem2}$, $(\ref{GG:lem2:eq1})$ holds for all even $C[-1,1].$ This together with $(\ref{GG:prop3:proof:eq3})$ implies $(\ref{GG:prop3:eq1})$ for all $\psi_{2p}$ with $p\in\mathcal{A}$ and then $\mathcal{A}\in\mathcal{C}_0$ yields $(\ref{GG:prop3:eq1})$ for all even $\psi\in C[-1,1].$ This completes the proof of the first assertion.

\smallskip

Next, suppose that $\mathcal{I}_j$ satisfies $(C_j)$ and $\mbox{Var}(h^j)\neq 0$ for both $j=1,2$. The use of $\psi_1(x)=|x|+2\min(0,x)$, the positivity of the overlaps $(\ref{GG:prop3:proof:eq4})$, and the first assertion $(\ref{GG:prop3:eq1})$ leads to
\begin{align}
\begin{split}
\label{add:eq1}
\limsup_{N\rightarrow\infty}\sup_{\|f\|_\infty\leq 1}|\Phi_{j,n}(f,\psi_1)|&\leq \limsup_{N\rightarrow\infty}\sup_{\|f\|_\infty\leq 1}|\Phi_{j,n}(f,|x|)|\\
&+2\limsup_{N\rightarrow\infty}\sup_{\|f\|_\infty\leq 1}|\Phi_{j,n}(f,\min(0,x))|=0.
\end{split}
\end{align}
Note that $s_1,s_2\neq 0$. Let us use $(\ref{GG:lem1:eq5})$ and $(\ref{GG:lem1:eq6})$ if $|t|<1$ or use $(\ref{GG:lem1:eq7}),$ $(\ref{GG:lem1:eq8}),$ and $(\ref{add:eq1})$ if $|t|=1$ to get $(\ref{GG:lem2:eq1})$ for $\psi=\psi_1$. Consequently, $(\ref{GG:lem2:eq1})$ holds for both $j=1,2$ and all $\psi\in C[-1,1]$ by Lemma \ref{GG:lem2}. This finishes our proof.
\end{proof}

\begin{proposition}\label{GG:prop1}
Suppose that $0\leq t_{p}<1$ for some $p\in\mathcal{I}_1\cap \mathcal{I}_2.$ Then $(\ref{GG:lem2:eq1})$ holds for both $j=1,2$ and all even $\psi\in C[-1,1].$ For $j\in\{1,2\},$ if $\mathcal{I}_j\in\mathcal{C}_0,$ then $(\ref{GG:prop3:eq1})$ holds for all even $\psi\in C[-1,1].$ If $\mbox{\rm Var} (h^j)\neq 0$ and $\mathcal{I}_{j}\in\mathcal{C}_0$ for both $j=1,2,$ then $(\ref{GG:lem2:eq1})$ holds for both $j=1,2$ and all $\psi\in C[-1,1].$
\end{proposition}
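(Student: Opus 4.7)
The plan is to mirror the three-part structure of Proposition \ref{GG:prop3}, with the strict inequality $t_p<1$ playing the role that conditions $(C_1),(C_2)$ played there. First, pick $p\in\mathcal{I}_1\cap\mathcal{I}_2$ with $t_p<1$, so that $\beta_{1,p}$, $\beta_{2,p}$, and $\sqrt{1-t_p}$ are all nonzero. Dividing $(\ref{GG:lem1:eq1})$ and $(\ref{GG:lem1:eq2})$ by these nonzero constants yields
\[
\lim_{N\to\infty}\sup_{\|f\|_\infty\leq 1}|\Psi_{j,n}(f,\psi_{2p})|=0,\quad j=1,2.
\]
Since $2p\in 2\mathbb{N}$, Lemma \ref{GG:lem2} immediately upgrades this to $(\ref{GG:lem2:eq1})$ for all even $\psi\in C[-1,1]$, proving the first conclusion.

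Second, fix $j$ with $\mathcal{I}_j\in\mathcal{C}_0$. By the first step, $\Psi_{j,n}(f,\psi_{2q})\to 0$ for every $q\in\mathbb{N}$. Substituting this into $(\ref{GG:lem1:eq3})$ when $j=1$, or $(\ref{GG:lem1:eq4})$ when $j=2$, forces $\beta_{j,q}\Phi_{j,n}(f,\psi_{2q})\to 0$ for every $q$. Since $\beta_{j,q}\neq 0$ precisely for $q\in\mathcal{I}_j$, we obtain $\Phi_{j,n}(f,\psi_{2q})\to 0$ uniformly over $\|f\|_\infty\leq 1$ for every $q\in\mathcal{I}_j$. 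Because $\mathcal{I}_j\in\mathcal{C}_0$, the linear span of $1$ and $(\psi_{2q})_{q\in\mathcal{I}_j}$ is dense in $C[0,1]$ in sup norm; extending symmetrically, any even $\psi\in C[-1,1]$ is a uniform limit of such combinations. Since $\Phi_{j,n}(f,\cdot)$ is linear in its second argument, vanishes on the constant function, and satisfies $|\Phi_{j,n}(f,\psi)|\leq 2\|\psi\|_\infty$, density yields $(\ref{GG:prop3:eq1})$ for every even $\psi\in C[-1,1]$.

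Third, suppose $\mathrm{Var}(h^j)\neq 0$ and $\mathcal{I}_j\in\mathcal{C}_0$ for both $j=1,2$. Following the trick used in the proof of Proposition \ref{GG:prop3}, decompose $\psi_1(x)=|x|+2\min(0,x)$. The second step gives $\Phi_{j,n}(f,|x|)\to 0$ since $|x|$ is even, and the positivity of the overlap $(\ref{GG:prop3:proof:eq4})$, which applies because $\mathrm{Var}(h^j)\neq 0$ implies $\e(h^j)^2\neq 0$, forces $\Phi_{j,n}(f,\min(0,x))\to 0$ since $\min(0,R^j_{1,\ell})=0$ with probability tending to $1$. Hence $\Phi_{j,n}(f,\psi_1)\to 0$ for both $j=1,2$. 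A case split on the correlation $t$ of $(g^1,g^2)$ now finishes the argument: if $|t|<1$, then $s_j\sqrt{1-|t|}>0$ and $(\ref{GG:lem1:eq5})$--$(\ref{GG:lem1:eq6})$ give $\Psi_{j,n}(f,\psi_1)\to 0$ directly; if $|t|=1$, then $(\ref{GG:lem1:eq7})$--$(\ref{GG:lem1:eq8})$ combined with $\Phi_{j,n}(f,\psi_1)\to 0$ and $s_1,s_2>0$ yield the same conclusion. Lemma \ref{GG:lem2} with $a=1$ then extends $(\ref{GG:lem2:eq1})$ to all $\psi\in C[-1,1]$. The principal obstacle is merely the bookkeeping across the many identities of Lemma \ref{GG:lem1} and the case analysis on $|t|$ in this last step; no essentially new idea beyond those already appearing in Proposition \ref{GG:prop3} is required.
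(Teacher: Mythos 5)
Your proposal is correct and follows essentially the same route as the paper's proof: deducing $(\ref{GG:lem2:eq1})$ for even $\psi$ from $(\ref{GG:lem1:eq1})$--$(\ref{GG:lem1:eq2})$ and Lemma \ref{GG:lem2}, feeding this into $(\ref{GG:lem1:eq3})$--$(\ref{GG:lem1:eq4})$ with the M\"untz--Szasz density to get $(\ref{GG:prop3:eq1})$, and then splitting on $|t|<1$ versus $|t|=1$ with the decomposition $\psi_1(x)=|x|+2\min(0,x)$ and the positivity $(\ref{GG:prop3:proof:eq4})$. The only difference is cosmetic (you establish $\Phi_{j,n}(f,\psi_1)\to 0$ before the case split rather than only in the $|t|=1$ branch), so no further comment is needed.
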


\begin{proof}
Since $\beta_{1,p},\beta_{2,p}\neq 0$ and $t_{p}<1,$ one may see clearly that $(\ref{GG:lem1:eq1})$, $(\ref{GG:lem1:eq2})$, and Lemma $\ref{GG:lem2}$ together imply the first assertion. Next, using the first assertion together with $(\ref{GG:lem1:eq3})$ and $(\ref{GG:lem1:eq4})$ yields that if $\mathcal{I}_j\in\mathcal{C}_0$ for some $j\in\{1,2\},$ then $(\ref{GG:prop3:eq1})$ holds for all even $\psi\in C[-1,1]$. This proves the second assertion. 

Finally, suppose that $\mbox{Var}(h^j)\neq 0$ and $\mathcal{I}_{j}\in\mathcal{C}_0$ for both $j=1,2.$ Note that $s_1,s_2\neq 0$. If $|t|<1,$ we use $(\ref{GG:lem1:eq5})$ and $(\ref{GG:lem1:eq6})$ to see that for $j=1,2$, $(\ref{GG:lem2:eq1})$ is valid for both $j=1,2$ and $\psi=\psi_1$ and from Lemma \ref{GG:lem2}, this is also true for all $\psi\in C[-1,1].$ Suppose that $|t|=1.$ Using the relation $x=|x|+2\min(x,0)$, the positivity of the overlaps $(\ref{GG:prop3:proof:eq4}),$ and the second assertion, we also get $(\ref{add:eq1})$ for both $j=1,2.$ Applying this to $(\ref{GG:lem1:eq7})$ and $(\ref{GG:lem1:eq8})$ yields that $(\ref{GG:lem2:eq1})$ for both $j=1,2$ and $\psi=\psi_1$ and thus, from Lemma $\ref{GG:lem2},$ this is also true for all $\psi\in C[-1,1].$ This completes our proof.
\end{proof}

\subsection{Proofs of Theorems \ref{GG:thm01} and \ref{GG:thm02}}

The proofs of Theorems $\ref{GG:thm01}$ and $\ref{GG:thm02}$ rely on the following two propositions.

\begin{proposition}\label{GG:prop4}
Suppose that $(\ref{GG:lem2:eq1})$ holds for both $j=1,2$ and all even $\psi\in C[-1,1].$ For $j\in\{1,2\},$ if $(\ref{GG:prop3:eq1})$ holds for all even $\psi\in C[-1,1]$, then $(\ref{GG:thm01:eq1})$ holds.
\end{proposition}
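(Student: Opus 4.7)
My plan is to follow the strategy of \cite{CP12}, combining a Cauchy--Schwartz inequality with the Ghirlanda--Guerra identities $(\ref{GG:prop3:eq1})$ for one of the systems (say $j=1$, by symmetry of the hypothesis) and the cross identities $(\ref{GG:lem2:eq1})$ for both systems. The starting point is the product structure of the Gibbs average $\la\cdot\ra$ coming from $G_N^1\otimes G_N^2$: expanding $R(\vsi^1,\vtau^1)^{2k}$ as a $2k$-fold sum over indices and factoring the Gibbs average as
$$\la\sigma_{i_1}^1\cdots\sigma_{i_{2k}}^1\tau_{i_1}^1\cdots\tau_{i_{2k}}^1\ra = \la\sigma_{i_1}\cdots\sigma_{i_{2k}}\ra_1\la\tau_{i_1}\cdots\tau_{i_{2k}}\ra_2,$$
an application of Cauchy--Schwartz to the resulting index sum together with the duplication identity $\sum_{\mathbf{i}}\la\prod_s\sigma_{i_s}\ra_1^2 = N^{2k}\la(R^1_{1,2})^{2k}\ra$ will yield
\[
\la R(\vsi,\vtau)^{2k}\ra^2 \leq \la(R^1_{1,2})^{2k}\ra\la(R^2_{1,2})^{2k}\ra.
\]
Taking expectation, using Cauchy--Schwartz in $\e$, and bounding the second factor trivially by $|R^2_{1,2}|\leq 1$ would then give $\e\la R(\vsi,\vtau)^{2k}\ra \leq \sqrt{\e\la(R^1_{1,2})^{2k}\ra}$.

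The second and main step is to refine this bound using both families of identities. The cross identity $\Psi_{1,1}$ with even test functions $\psi_{2k}$ and polynomial $f$ gives, in the limit, substitutions such as $\e\la R_{1,1}^{2k}R_{1,2}^{2k}\ra \approx \e\la R_{1,1}^{4k}\ra$; iterating via $\Psi_{1,n}$ I would obtain $\e\la R_{1,1}^{2k}\cdots R_{1,n}^{2k}\ra \approx \e\la R_{1,1}^{2kn}\ra$. Combining this with the multi-replica Cauchy--Schwartz bound
\[
\la R_{1,1}^{2k}\cdots R_{1,n}^{2k}\ra \leq \sqrt{\la(R^1_{1,2})^{2kn}\ra}\cdot\la(R^2_{1,2})^{2k}\ra^{n/2},
\]
and then invoking $\Phi_{1,n}$ to identify the joint moments of $(R^1_{1,\ell})_\ell$ via the Ruelle probability cascade structure that the Ghirlanda--Guerra identities impose on system 1's overlaps in the limit, I would as in \cite{CP12} extract a refined estimate of the form $\e\la R(\vsi,\vtau)^{2k}\ra \leq c_1^k + o_N(1)$ for each fixed $k$.

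Finally, Chebyshev's inequality will give
\[
\e\la I(|R(\vsi,\vtau)|>\sqrt{c_1}+\varepsilon)\ra \leq \frac{\e\la R(\vsi,\vtau)^{2k}\ra}{(\sqrt{c_1}+\varepsilon)^{2k}} \leq \Bigl(\frac{c_1}{(\sqrt{c_1}+\varepsilon)^2}\Bigr)^k + o_N(1),
\]
and letting $N\to\infty$ first and then $k\to\infty$ will produce $(\ref{GG:thm01:eq1})$, since $c_1 < (\sqrt{c_1}+\varepsilon)^2$ for any $\varepsilon>0$.

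The main obstacle is the refinement step. The naive Cauchy--Schwartz alone yields only the trivial bound $|R|\leq 1$, because $\e\la(R^j_{1,2})^{2k}\ra$ is controlled only by $1$ in general. Extracting the sharp constant $\sqrt{c_1}$ requires the delicate interplay between the cross identities (which encode exchangeability of cross overlaps in the limit) and the Ghirlanda--Guerra identities (which enforce the Ruelle cascade structure for $R^1_{1,2}$), and this iterative book-keeping is the technical heart of the argument, parallel to the key lemmas in \cite{CP12}.
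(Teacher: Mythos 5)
Your opening computation is correct and in the right spirit: the factorization of the Gibbs average over $G_N^1\times G_N^2$ together with the identity $\sum_{\mathbf{i}}\langle\sigma_{i_1}\cdots\sigma_{i_{2k}}\rangle_1^2=N^{2k}\langle (R^1_{1,2})^{2k}\rangle$ does give $\langle R(\vsi,\vtau)^{2k}\rangle\le\langle(R^1_{1,2})^{2k}\rangle^{1/2}\langle(R^2_{1,2})^{2k}\rangle^{1/2}$, and your multi-replica variant is also valid. The final Chebyshev step would likewise be fine \emph{if} the moment bound $\e\langle R(\vsi,\vtau)^{2k}\rangle\le c_1^k+o_N(1)$ were available.

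The gap is precisely in the refinement step, and the mechanism you propose cannot close it. Every Cauchy--Schwartz bound you write terminates in a single-pair moment $\langle(R^1_{1,2})^{m}\rangle$, and $(\e\langle(R^1_{1,2})^{m}\rangle)^{1/m}$ converges as $m\to\infty$ to the \emph{maximum} of the support of the limiting overlap distribution of system $1$, not to $c_1=\min\mbox{supp}\,\mu_P^1$. No use of the Ghirlanda--Guerra identities or the cascade structure can push such a moment (or even a joint moment of several $R^1_{\ell,\ell'}$) below the top of the support, since in a cascade all $n$ replicas lie in the same pure state with positive probability; your route therefore yields only $|R(\vsi,\vtau)|\lesssim(q^1_{\max})^{1/2}$, strictly weaker than $(\ref{GG:thm01:eq1})$ unless $\mu_P^1$ is a single atom. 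The constant $c_1$ enters the actual argument (the paper's sketch of Proposition \ref{GG:prop5}, resting on Lemma 4 of \cite{CP12}) through an event-based duplication, not through moments: one assumes for contradiction that $|R_{1,1}|\ge\sqrt{c_1}+\varepsilon$ has non-vanishing probability; the identities $(\ref{GG:lem2:eq1})$ force all $|R_{\ell,\ell'}|$ to be essentially equal; and the identities $(\ref{GG:prop3:eq1})$ --- crucially stated for functions $f$ that may depend on the cross overlaps --- allow one to duplicate replicas so that, still with non-vanishing probability, one additionally has $|R^1_{\ell,\ell'}|\le c_1+\delta$ for all $\ell\ne\ell'$, because each new replica is ``independent with weight $1/n$'' and $\mu_P^1$ charges every neighborhood of its minimum. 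On that event the Cauchy--Schwartz inequality applied to $N^{-1/2}\sum_{\ell}\vsi^\ell$ and $N^{-1/2}\sum_{\ell}\vtau^\ell$ gives $\sqrt{c_1}+\varepsilon\lesssim(1/n+c_1+\delta)^{1/2}$, a contradiction for $n$ large and $\delta$ small. Without this positive-probability selection of replicas whose mutual overlaps sit near the \emph{bottom} of the support, the constant $\sqrt{c_1}$ is out of reach, so the proposal as written does not prove the proposition.
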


\begin{proposition}\label{GG:prop5}
For both $j=1,2$, if $(\ref{GG:lem2:eq1})$ holds for all $\psi\in C[-1,1]$ and $(\ref{GG:prop3:eq1})$ holds for all even $\psi\in C[-1,1]$, then
$(\ref{GG:thm01:eq2})$ holds.
\end{proposition}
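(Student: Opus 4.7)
The plan is to establish $\e\langle R(\vsi,\vtau)^2\rangle \le c_1c_2 + o(1)$ by combining a Cauchy--Schwartz inequality with the identities, then conclude $(\ref{GG:thm01:eq2})$ by Markov's inequality together with the tightness of $R(\vsi,\vtau)$ in $[-1,1]$. This refines the argument for Proposition \ref{GG:prop4} by using the full strength of the hypothesis that $(\ref{GG:lem2:eq1})$ holds for all $\psi\in C[-1,1]$ (in particular for $\psi=\psi_1$), together with positivity of the overlap on both systems.

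First I would apply the new identities $\Psi_{1,n}$ and $\Psi_{2,n}$ with $\psi=\psi_1$ and suitably chosen bounded overlap-test functions $f$, iterated via Lemma \ref{GG:lem2}, to show that the cross-overlaps $R_{\ell,\ell'}$ are asymptotically exchangeable in a strong sense. A consequence that I would extract is the identity
$$\e\langle R(\vsi^1,\vtau^1)^2\rangle - \e\langle R(\vsi^1,\vtau^1)R(\vsi^2,\vtau^2)\rangle \longrightarrow 0,$$
i.e., the Gibbs variance of the cross-overlap vanishes. Since $G_N^1\times G_N^2$ is a product measure, the second term equals $\e[\langle R(\vsi,\vtau)\rangle^2]$, and applying the pointwise Cauchy--Schwartz inequality to the representation $\langle R(\vsi,\vtau)\rangle = N^{-1}\sum_i\langle\sigma_i\rangle_1\langle\tau_i\rangle_2$ yields
$$\langle R(\vsi,\vtau)\rangle^2 \;\le\; \langle R^1_{1,2}\rangle_1\,\langle R^2_{1,2}\rangle_2.$$
Combining, $\e\langle R(\vsi,\vtau)^2\rangle \le \e[\langle R^1_{1,2}\rangle\langle R^2_{1,2}\rangle] + o(1)$.

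The main obstacle is then to bound the right-hand side by $c_1c_2 + o(1)$. The individual Ghirlanda--Guerra identities $(\ref{GG:prop3:eq1})$ for even $\psi$ only give that $\langle R^j_{1,2}\rangle$ concentrates at the Parisi mean $\bar q_j \ge c_j$, which is typically strictly larger than $c_j$ in the replica-symmetry-breaking regime. To push the bound down to $c_j$, I would exploit the positivity of the overlap $(\ref{intro:eq19})$ on both systems, available because $\mbox{Var}(h^j)\neq 0$ for both $j=1,2$, by inserting additional replicas and iterating the Cauchy--Schwartz inequality symmetrically in each system; the individual GG identities applied on each side then allow one to replace $\langle R^j_{1,2}\rangle$ by the minimum of the support $c_j$ in the limit. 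Carrying out this symmetric reduction on both sides simultaneously is precisely the improvement of Proposition \ref{GG:prop5} over Proposition \ref{GG:prop4} (which effectively worked with only one side), and it is the most delicate technical step of the argument. Once the moment bound $\e\langle R(\vsi,\vtau)^2\rangle \le c_1c_2 + o(1)$ is in place, applying Markov's inequality to higher moments $\e\langle |R(\vsi,\vtau)|^{2k}\rangle$ (controlled by the same kind of CS-and-identities bootstrapping, using that $|R|\le 1$) yields $(\ref{GG:thm01:eq2})$.
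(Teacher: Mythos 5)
Your opening reductions are correct (the new identities with $\psi_1$ give $\e\langle R_{1,1}^2\rangle-\e\langle R_{1,1}R_{2,2}\rangle\to 0$, and the pointwise Cauchy--Schwartz gives $\langle R_{1,1}\rangle^2\le\langle R_{1,2}^1\rangle\langle R_{1,2}^2\rangle$), and the ingredients you list --- near-equality of all cross overlaps, Cauchy--Schwartz between the two systems, GG identities plus positivity to reach the bottom of the support --- are indeed the ones the paper uses. But the step you yourself flag as "the main obstacle" is not merely delicate: within the two-replica moment framework you set up, it is false. The quantity $\e[\langle R_{1,2}^1\rangle\langle R_{1,2}^2\rangle]$ is governed by the full Parisi measures, not by their minima; for instance, when the two disorders are independent it converges to $\bar q_1\bar q_2$ with $\bar q_j=\int q\,d\mu_P^j(q)$, which is strictly larger than $c_1c_2$ whenever either $\mu_P^j$ is non-degenerate. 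So the target bound $\e\langle R(\vsi,\vtau)^2\rangle\le c_1c_2+o(1)$ cannot be reached by bounding this right-hand side, and "replacing $\langle R^j_{1,2}\rangle$ by $c_j$ in the limit" is not something the GG identities permit. The same objection applies to the higher-moment bootstrap $\e\langle |R|^{2k}\rangle\le (c_1c_2)^k+o(1)$, which is asserted but never derived; without it, the second moment bound alone gives only a probability bound strictly between $0$ and $1$, not the limit $0$ required by $(\ref{GG:thm01:eq2})$.

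The missing ingredient is an event-based (not moment-based) statement: for fixed $n$ and any $c_j''>c_j$, the GG identities $(\ref{GG:prop3:eq1})$ together with positivity $(\ref{intro:eq19})$ (which upgrades $R^j_{\ell,\ell'}\le c_j''$ to $|R^j_{\ell,\ell'}|\le c_j''$) imply that the event on which \emph{all} pairwise overlaps among $n$ replicas satisfy $|R^j_{\ell,\ell'}|\le c_j''$ simultaneously for $j=1,2$ has probability bounded below uniformly in $N$, and this event can be intersected with $\{|R_{\ell,\ell'}|\ge\sqrt{c_1'c_2'}\ \forall\ell,\ell'\}$, which also has probability bounded below if $(\ref{GG:thm01:eq2})$ fails (using your observation that all cross overlaps are asymptotically equal). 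This is Lemma 4 of \cite{CP12} and is the actual content of the proposition. On that event, Cauchy--Schwartz applied to $N^{-1/2}\sum_{\ell\le n}\vsi^\ell$ and $N^{-1/2}\sum_{\ell\le n}\vtau^\ell$ forces $\sqrt{c_1'c_2'}\le\sqrt{(c_1''+1/n)(c_2''+1/n)}$ (the $1/n$ accounting for the diagonal terms $R^j_{\ell,\ell}=1$), a contradiction for $n$ large and $c_j''<c_j'$. As written, your proposal names this mechanism in one sentence but supplies no argument for it, and embeds it in a moment computation that cannot deliver the stated bound; this is a genuine gap.
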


Since the proofs of these two propositions are exactly the main ingredients of Theorems 3 and 4 in \cite{CP12}, we will only sketch the proof for Proposition $\ref{GG:prop5}$ as follows. Suppose that for both $j=1,2,$ $(\ref{GG:lem2:eq1})$ holds for all $\psi\in C[-1,1]$ and $(\ref{GG:prop3:eq1})$ holds for all even $\psi\in C[-1,1]$. Observe that from $(\ref{GG:lem2:eq1})$ and using symmetry between replicas, they essentially imply
$R_{1,1}\approx R_{\ell,\ell'}$ for all $\ell,\ell'\leq n$ and $n\geq 1.$ 
If $(\ref{GG:thm01:eq2})$ is not true, then $R_{1,1}\geq \sqrt{c_1'c_2'}$ has nonzero probability for some $c_1',c_2'$ satisfying $c_1<c_1'<1$ and $c_2<c_2'<1$. From the Ghirlanda-Guerra identities (see Lemma 4 \cite{CP12}), the following holds with nonzero probability:
$$
|R_{\ell,\ell'}|\geq \sqrt{c_1'c_2'},\,|R_{\ell,\ell'}^1|\leq c_1'',\,|R_{\ell,\ell'}^2|\leq c_2''
$$
for all $c_1'',c_2''$ satisfying $c_1<c_1''< c_1'$ and $c_2<c_2''<c_2'.$ However, using the Cauchy-Schwartz inequality to the usual inner product of $N^{-1/2}(\vsi^1+\cdots+\vsi^n)$ and $N^{-1/2}(\vtau^1+\cdots+\vtau^n)$ leads to 
$$
\sqrt{c_1'c_2'}\leq |R_{1,1}|\approx\frac{1}{n^2}\left|\sum_{\ell,\ell'=1}^nR_{\ell,\ell'}\right|\leq\left(\frac{1}{n^2}\sum_{\ell,\ell'=1}^n|R_{\ell,\ell'}^1|\right)^{1/2}\left(\frac{1}{n^2}\sum_{\ell,\ell'=1}^n|R_{\ell,\ell'}^2|\right)^{1/2}\leq \sqrt{c_1''c_2''}.
$$
This forms a contradiction since indeed $\sqrt{c_1''c_2''}<\sqrt{c_1'c_2'}.$

\smallskip
\smallskip

\begin{Proof of theorem} {\bf \ref{GG:thm01}:} For both $j=1,2$, since $\mathcal{I}_j$ satisfies $(C_j)$, it follows by Proposition \ref{GG:prop3} that $(\ref{GG:lem2:eq1})$ and $(\ref{GG:prop3:eq1})$ hold for all even $\psi\in C[-1,1]$. Thus, $(\ref{GG:thm01:eq1})$ follows for both $j=1,2$ from Proposition \ref{GG:prop4}. If, in addition, $\mbox{Var}(h^j)\neq 0$ for both $j=1,2,$ then again from Proposition \ref{GG:prop3}, $(\ref{GG:lem2:eq1})$ holds for all $\psi\in C[-1,1]$ and Proposition \ref{GG:prop5} implies (\ref{GG:thm01:eq1}).
\end{Proof of theorem}

\begin{Proof of theorem} {\bf \ref{GG:thm02}:} 
From the given condition $0\leq t_{p}<1$ for some $p\in\mathcal{I}_1\cap \mathcal{I}_2,$ we know that $(\ref{GG:lem2:eq1})$ holds for both $j=1,2$ and all even $\psi\in C[-1,1]$ by Proposition \ref{GG:prop1}. For $j\in\{1,2\},$ if $\mathcal{I}_j\in\mathcal{C}_0$, $(\ref{GG:prop3:eq1})$ is true for all even $\psi\in C[-1,1]$ by Proposition \ref{GG:prop1} and consequently, $(\ref{GG:thm01:eq1})$ holds by Proposition \ref{GG:prop4}. If $\mbox{Var}(h^j)\neq 0$ and $\mathcal{I}_j\in\mathcal{C}_0$ for both $j=1,2,$ then $(\ref{GG:lem2:eq1})$ is valid for all $\psi\in C[-1,1]$ and $(\ref{GG:prop3:eq1})$ holds for all even $\psi\in C[-1,1]$ by Proposition \ref{GG:prop1} and consequently, $(\ref{GG:thm01:eq2})$ holds by Proposition \ref{GG:prop5}.
\end{Proof of theorem}


\section{Proofs of Theorems \ref{thm:tc}, \ref{thm:dc}, and \ref{thm:efc}}

Our last section will be the proofs of Theorems $\ref{thm:tc}$, $\ref{thm:dc}$, and $\ref{thm:efc}$ that are based on our main results derived in all previous sections.

\smallskip
\smallskip

\begin{Proof of theorem} {\bf \ref{thm:tc}:}
Using the given conditions, Theorem $\ref{GG:thm01}$ implies $(\ref{GG:thm01:eq1})$ for both $j=1,2.$ For $j=1$ or $2$, if $\e(h^j)^2=0$, then from Theorem $\ref{sec3:thm1}$, $c_j=0$ and so from $(\ref{GG:thm01:eq1})$, $(\ref{thm:tc:eq1})$ follows. If $\mbox{Var}(h^j)\neq 0$ for both $j=1,2,$ then from Theorem \ref{sec2:thm1}, for $\varepsilon>0,$ there exist $c_1<v_1<1$, $c_2<v_2<1$, and $K>0$ that are independent of $N$ such that 
$(\ref{sec2:thm1:eq1})$ holds for all $N\geq 1.$ On the other hand, from Theorem \ref{GG:thm01}, we also have
\begin{align}
\label{proof:eq1}
\lim_{N\rightarrow\infty}\e\left<I(|R(\vsi,\vtau)|>\sqrt{v_1'v_2'})\right>=0
\end{align}
for all $c_1<v_1'<v_1$ and $c_2<v_2'<v_2.$ Combining this with $(\ref{sec2:thm1:eq1})$ gives $(\ref{thm:tc:eq2}).$
\end{Proof of theorem}

\begin{Proof of theorem} {\bf \ref{thm:dc}:}
This part of the proof is very similar to that for Theorem \ref{thm:tc}. Suppose that $\e(h^j)^2=0$ and $\mathcal{I}_j\in\mathcal{C}_0$ for some $j=1,2.$ From Theorems \ref{sec3:thm1} and \ref{GG:thm02}, $(\ref{thm:tc:eq1})$ follows. Suppose that $\mbox{Var}(h^j)\neq 0$ and $\mathcal{I}_j\in\mathcal{C}_j$ for both $j=1,2.$ From Theorem $\ref{sec2:thm1}$, there are $c_1<v_1<1$, $c_2<v_2<1$, and $K>0$ independent of $N$ such that $(\ref{sec2:thm1:eq1})$ holds for all $N\geq 1.$ Also, from Theorem \ref{GG:thm02}, for any $(v_1',v_2')$ with $c_1<v_1'<v_1$ and $c_2<v_2'<v_2$, we have $(\ref{proof:eq1}).$ This together with $(\ref{sec2:thm1:eq1})$ implies $(\ref{thm:tc:eq2}).$
\end{Proof of theorem}

\begin{Proof of theorem} {\bf \ref{thm:efc}:} Since $\beta_{1,p}=\beta_{2,p}$, $t_{p}=1$ for all $p\geq 1$, and $h^1,h^2$ are identically distributed, the two systems are equal to each other in distribution. Thus, we may pick $\mu_{P}^1=\mu_{P}^2$ and simply denote them by $\mu_P.$ Let $\xi:=\xi_{1,1}=\xi_{2,2}=\xi_{1,2}$ and $c:=\min\mbox{supp}\mu_P$. Note that $c>0$ since $\e(h^1)^2=\e(h^2)\neq 0$. Let $u_f$ be the fixed point of $\phi_{c,c}$ from Proposition $\ref{intro:prop1}.$ We claim that $|u_f|<c.$ If $u_f=c,$ then using $\phi_{c,c}(u_f)=u_f$ and $(\ref{sec2:prop1:eq1})$ implies
\begin{align*}
&\e\left(\frac{\partial\Phi_{\mu_P}}{\partial x}(h^1+\chi,c)-\frac{\partial\Phi_{\mu_P}}{\partial x}(h^2+\chi,c)\right)^2=2c-2u_f=0,
\end{align*}
where $\chi$ is centered Gaussian with variance $\xi'(c)$ independent of $h^1,h^2.$ 
This means that $$
\frac{\partial\Phi_{\mu_P}}{\partial x}(h^1+\chi,c)=\frac{\partial\Phi_{\mu_P}}{\partial x}(h^2+\chi,c)\,\,a.s.
$$
However, since $\frac{\partial\Phi_{\mu_P}}{\partial x}(\cdot,c)$ is strictly increasing from $(b)$ in Proposition \ref{add:prop1}, we obtain $h^1+\chi=h^2+\chi$ a.s. and thus, $h^1=h^2$ a.s. forms a contradiction. Similarly, if $u_f=-c,$ then using $\phi_{c,c}(u_f)=u_f$ and $(\ref{sec2:prop1:eq1})$ yields
\begin{align*}
&\e\left(\frac{\partial\Phi_{\mu_P}}{\partial x}(h^1+\chi,c)+\frac{\partial\Phi_{\mu_P}}{\partial x}(h^2-\chi,c)\right)^2=2c+2u_f=0,
\end{align*}
where $\chi$ is defined as above. This means that $$
\frac{\partial\Phi_{\mu_P}}{\partial x}(h^1+\chi,c)=-\frac{\partial\Phi_{\mu_P}}{\partial x}(h^2-\chi,c)\,\,a.s.
$$
Since $\frac{\partial\Phi_{\mu_P}}{\partial x}(\cdot,c)$ is odd and strictly increasing from $(b)$ and $(c)$ in Proposition \ref{add:prop1}, it follows that $h^1+\chi=-h^2+\chi$ a.s. and thus, $h^1=-h^2$ a.s., a contradiction again. Thus this completes the proof of our claim. Now for $\varepsilon>0,$ from Theorem \ref{sec2:thm1}, there are $c<v_1,v_2<1$ and $K>0$ independent of $N$ such that for all $N\geq 1,$
\begin{align}
\label{proof:eq4}
\e\left<I(|R(\vsi,\vtau)|\leq \sqrt{v_1v_2},|R(\vsi,\vtau)-u_f|\geq \varepsilon)\right>\leq K\exp\left(-\frac{N}{K}\right).
\end{align}

Recall $S_N:=\{i/N:-N\leq i\leq N\}.$ An advantage brought by the assumptions on the parameters for the two models is that under this setting it is slightly easier to find parameters to control Guerra's bound that yields the following statement: There are constants $K_1,K_2>0$ depending only on $\xi$ such that if $0<c'<c''<1$ with $\xi'(c'')-\xi'(c')<K_1$ and $(k,\mathbf{m},\mathbf{q})$ is any triplet with $q_s\leq c'$ and $m_s\geq \delta$ for some $1\leq s\leq k+1,$ then
\begin{align}
\label{proof:eq2}
p_{N,u}\leq 2\mathcal{P}_k(\mathbf{m},\mathbf{q})-\delta K_2\int_{c'}^{c''}\e F_{u}(h^1,h^2,\xi'(q))\xi''(q)dq
\end{align}  
for all $u\in S_N$ with $c''\leq |u|\leq 1,$ where 
\begin{align*}
F_u(x_1,x_2,w)&:=\left\{
\begin{array}{ll}
\e\left(\tanh(x_1+z\sqrt{w})-\tanh(x_2+z\sqrt{w})\right)^2,&\mbox{if $u>0$},\\
\e\left(\tanh(x_1+z\sqrt{w})+\tanh(x_2-z\sqrt{w})\right)^2,&\mbox{if $u<0$}
\end{array}\right.
\end{align*}
for some standard Gaussian r.v. $z.$ The proof of $(\ref{proof:eq2})$ is based on a series of applications of the Gaussian interpolation technique to the iteration scheme of the Parisi functional. One may refer to Proposition 11 \cite{Chen11} to a detailed discussion. Let us emphasize that although Proposition 11 \cite{Chen11} considers the case $h^1=h^2,$ $(\ref{proof:eq2})$ is indeed also true for identically distributed $h^1,h^2$ (see $(6.17)$ in \cite{Chen11}). Now, we let $c''=\sqrt{v_1v_2}$ and pick $c'$ with $c<c'<c''$ such that $\mu_P$ is continuous at $c'$ and $\xi'(c'')-\xi'(c')<K_1.$ By the definition of the Parisi measure $\mu_P$, it is the weak limit of a sequence of probability measures $\mu_n\in\mbox{MIN}(\varepsilon_n)$ with $\varepsilon_n\downarrow 0.$ Using this sequence and $(\ref{proof:eq2}),$ we have for all $N\geq 1,$
\begin{align*}
p_{N,u}\leq 2\mathcal{P}(\xi,h,\mu_n)-\mu_n([0,c'])K_2\int_{c'}^{c''}\e F_{u}(h^1,h^2,\xi'(q))\xi''(q)dq
\end{align*}
for all $u\in S_N$ with $c''\leq |u|\leq 1$ and letting $n$ tend to infinity implies
\begin{align*}
p_{N,u}\leq 2\mathcal{P}(\xi,h)-\mu_P([0,c'])K_2\int_{c'}^{c''}\e F_{u}(h^1,h^2,\xi'(q))\xi''(q)dq.
\end{align*}
Since $\e(h^1\pm h^2)^2\neq 0$, $\tanh$ is strictly increasing, and $c$ is the smallest number in the support of $\mu_P,$ there is a constant $\varepsilon^*>0$ independent of $N$ such that for all $N\geq 1,$
\begin{align*}
p_{N,u}\leq 2\mathcal{P}(\xi,h)-\varepsilon^*
\end{align*}
for all $u\in S_N$ with $c''\leq |u|\leq 1.$ Using this and concentration inequalities for the Gaussian r.v. $\mathcal{G}^1$, $\mathcal{G}^2$ and the sub-Gaussian r.v. $h^1$, $h^2$ as we have argued in the proof of Theorem $\ref{sec2:thm1}$, we have that
\begin{align*}
\e\left<I(|R(\vsi,\vtau)|\geq \sqrt{v_1v_2})\right>\leq K'\exp\left(-\frac{N}{K'}\right)
\end{align*}
for all $N\geq 1,$ where $K'>0$ is some constant independent of $N.$ Combining this inequality with $(\ref{proof:eq4})$ clearly completes the proof.
\end{Proof of theorem}

\bigskip

\noindent{\Large\bf Appendix}
\smallskip
\smallskip

\noindent Recall that we say a r.v. $Y$ is sub-Gaussian if there exist $d_1,d_2>0$ such that $\e \exp tY\leq d_1\exp d_2t^2$ for all $t\in\mathbb{R}.$
In this appendix, we will prove a concentration inequality for correlated sub-Gaussian r.v. that are mainly used in the proofs of Theorems \ref{thm:efc} and \ref{sec2:thm1}. 

\begin{lemma}\label{appendix:lem1}
If $Y$ is a centered sub-Gaussian r.v., then there exists some $d>0$ such that $\e\exp tY\leq \exp dt^2$ for all $t\in\mathbb{R}.$
\end{lemma}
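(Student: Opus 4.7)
The plan is to fix a threshold $t_0>0$, treat $|t|\geq t_0$ and $|t|\leq t_0$ separately, and take the larger of the two resulting constants.

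For $|t|\geq t_0$, the sub-Gaussian hypothesis directly gives $\e\exp tY\leq d_1\exp d_2 t^2=\exp(d_2 t^2+\log d_1)$. Since $|\log d_1|\leq |\log d_1|\cdot t^2/t_0^2$ whenever $|t|\geq t_0$, this is at most $\exp((d_2+|\log d_1|/t_0^2)t^2)$, which already has the required form.

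For $|t|\leq t_0$, the role of the centering assumption $\e Y=0$ is precisely to kill the multiplicative constant $d_1$. Using the elementary pointwise inequality $e^x\leq 1+x+x^2 e^{|x|}$ and taking expectations yields
$$
\e\exp tY\;\leq\; 1+t\,\e Y+t^2\,\e\bigl(Y^2\exp|tY|\bigr)\;=\;1+t^2\,\e\bigl(Y^2\exp|tY|\bigr).
$$
I would then bound $\e(Y^2\exp|tY|)$ uniformly on $[-t_0,t_0]$: from $\exp|tY|\leq \exp tY+\exp(-tY)$ and Cauchy--Schwartz,
$$
\e\bigl(Y^2\exp|tY|\bigr)\;\leq\; 2(\e Y^4)^{1/2}\bigl(\e\exp 2tY+\e\exp(-2tY)\bigr)^{1/2}.
$$
Here $\e Y^4<\infty$ follows from the sub-Gaussian bound via the standard Markov-type tail estimate $\p(|Y|\geq s)\leq 2d_1\exp(-s^2/(4d_2))$, which integrates to a finite fourth moment; and each of the two exponential moments is bounded by $d_1\exp(4d_2 t_0^2)$ for $|t|\leq t_0$. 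Consequently $\e(Y^2\exp|tY|)\leq C$ for some constant $C=C(d_1,d_2,t_0)$, and therefore $\e\exp tY\leq 1+Ct^2\leq\exp(Ct^2)$.

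Combining the two regimes gives $\e\exp tY\leq\exp(d\,t^2)$ for all $t\in\mathbb{R}$ with $d:=\max\{C,\,d_2+|\log d_1|/t_0^2\}$. The only substantive step is the small-$t$ analysis: without $\e Y=0$ one would only obtain $\e\exp tY=1+O(t)$ near the origin, and the multiplicative $d_1$ in the original sub-Gaussian bound could not be absorbed into the exponent. Everything else (the moment bound and the choice of threshold) is bookkeeping.
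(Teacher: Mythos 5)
Your proof is correct and follows essentially the same two-regime strategy as the paper's: absorb the multiplicative constant $d_1$ into the exponent for $|t|$ bounded away from zero, and use the centering $\e Y=0$ to get a bound of the form $1+O(t^2)$ near the origin. The only difference is that you replace the paper's Taylor expansion $\e\exp tY=1+t\e Y+\frac{t^2}{2}\e Y^2+o(t^2)$ by the quantitative inequality $e^x\leq 1+x+x^2e^{|x|}$ together with explicit moment estimates, which is if anything slightly more careful than the original.
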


\begin{proof}
Let $\delta>0$ be fixed and sufficiently small. If $d_1\leq \exp Md_2\delta^2$ for some $M>0,$ then 
\begin{align}
\label{appendix:lem1:proof:eq1}
d_1\exp d_2t^2\leq \exp d_2(M+1)t^2,\,\,\forall |t|\geq \delta.
\end{align}
On the other hand, since $\e Y=0,$
\begin{align*}
\e \exp tY&=1+t\e Y+\frac{t^2}{2}\e Y^2+o(t^2)\leq \exp\frac{t^2}{2}(\e Y^2+1),\,\,\forall |t|\leq \delta.
\end{align*}
This and \eqref{appendix:lem1:proof:eq1} give our assertion by letting $d:=\max(d_2(M+1),(\e Y^2+1)/2)$.
\end{proof}

\begin{proposition}[sub-Gaussian concentration inequality]\label{appendix:prop1} Let $Y^1$ and $Y^2$ be two sub-Gaussian r.v. $($not necessarily independent$)$. Let $N\geq 1.$ Suppose that $F$ is a real-valued function on $\mathbb{R}^{2N}$ satisfying the following property: For $1\leq i\leq N,$ 
\begin{align}
\begin{split}\label{appendix:prop1:eq1}
&|F(\mathbf{y}^1,\mathbf{y}^2)-F(\mathbf{y}_i^1(z^1),\mathbf{y}_i^2(z^2))|\leq \frac{1}{N}(|y_i^1|+|y_i^2|+|z^1|+|z^2|)
\end{split}
\end{align}
for all $\mathbf{y}^1=(y_1^1,\ldots,y_N^1)$, $\mathbf{y}^2=(y_1^2,\ldots,y_N^2)\in\mathbb{R}^N,$ and $z^1,z^2\in\mathbb{R},$ where $\mathbf{y}_i^j(z^j):=(y_1^j,\ldots,y_{i-1}^j,z^j,y_{i+1}^j,\ldots,y_N^j).$ Let $(Y_1^1,Y_{1}^2),\ldots,(Y_{N}^1,Y_N^2)$ be i.i.d. copies of $({Y}^1,{Y}^2)$. Set $\mathbf{Y}=(Y_1^1,\ldots,Y_N^1,Y_1^2,\ldots,Y_N^2)$. Then there exists a constant $K>0$ depending only on $Y^1,Y^2$ such that for any $\varepsilon>0,$ 
\begin{align}\label{appendix:lem1:eq1}
\p\biggl(|F(\mathbf{Y})-\e F(\mathbf{Y})|\geq \varepsilon\biggr)\leq K\exp\biggl(-\frac{N\varepsilon^2}{K}\biggr).
\end{align}
\end{proposition}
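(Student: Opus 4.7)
The plan is to use the martingale-difference method (Azuma/McDiarmid style), adapted to the fact that the Lipschitz constant in \eqref{appendix:prop1:eq1} is not uniform but grows linearly in the magnitudes of the coordinates. Let $\mathcal{F}_0=\{\emptyset,\Omega\}$ and $\mathcal{F}_i=\sigma((Y_j^1,Y_j^2):1\leq j\leq i)$ for $1\leq i\leq N$, and set $D_i=\e[F(\mathbf{Y})\mid\mathcal{F}_i]-\e[F(\mathbf{Y})\mid\mathcal{F}_{i-1}]$, so that $F(\mathbf{Y})-\e F(\mathbf{Y})=\sum_{i=1}^N D_i$ is a telescoping sum of martingale differences. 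The strategy will be to bound the conditional MGF $\e[\exp(tD_i)\mid\mathcal{F}_{i-1}]$ by $\exp(K_1 t^2/N^2)$ on a range $|t|\leq c_1 N$, chain these bounds along the martingale via the tower property to get $\e\exp(t(F(\mathbf{Y})-\e F(\mathbf{Y})))\leq\exp(K_1 t^2/N)$ in the same range, and close the argument with a Chernoff optimization.

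For the conditional MGF bound, first integrate \eqref{appendix:prop1:eq1} against the laws of the coordinates indexed by $j\neq i$ to obtain that $F_i(y,y'):=\e[F(\mathbf{Y})\mid\mathcal{F}_{i-1},Y_i^1=y,Y_i^2=y']$ inherits the same Lipschitz-type estimate $|F_i(y,y')-F_i(z,z')|\leq N^{-1}(|y|+|y'|+|z|+|z'|)$. Introducing an independent copy $(Y_i'^1,Y_i'^2)$ of $(Y_i^1,Y_i^2)$ independent of $\mathcal{F}_{i-1}$, write $D_i=\e_{Y_i'}[F_i(Y_i^1,Y_i^2)-F_i(Y_i'^1,Y_i'^2)]$. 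Then the standard symmetrization chain---Jensen's inequality, the exchange-symmetry of $F_i(Y_i)-F_i(Y_i')$, the bound $\cosh x\leq e^{x^2/2}$, and $(a+b+c+d)^2\leq 4(a^2+b^2+c^2+d^2)$---yields
$$\e[\exp(tD_i)\mid\mathcal{F}_{i-1}]\leq\bigl(\e\exp(2t^2 N^{-2}((Y^1)^2+(Y^2)^2))\bigr)^2.$$
The sub-Gaussian hypothesis gives the Chernoff tail $\p(|Y^j|>x)\leq 2d_1\exp(-x^2/(4d_2))$, from which a layer-cake integration produces $\e\exp(c(Y^j)^2)\leq 1+K_0 c$ for all $c$ in a small interval $[0,c_0]$ depending only on $Y^1,Y^2$; combining with Cauchy-Schwartz to split the joint square-exponential moment delivers $\e[\exp(tD_i)\mid\mathcal{F}_{i-1}]\leq\exp(K_1 t^2/N^2)$ uniformly in $i$ and in $\mathcal{F}_{i-1}$, for $|t|\leq c_1 N$.

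Chaining along the martingale and applying Markov's inequality gives $\p(|F(\mathbf{Y})-\e F(\mathbf{Y})|\geq\varepsilon)\leq 2\exp(-t\varepsilon+K_1 t^2/N)$ in the admissible range of $t$; optimizing at $t=N\varepsilon/(2K_1)$ yields the sub-Gaussian bound \eqref{appendix:lem1:eq1} when this optimum lies in the range, and taking $t$ at the boundary $c_1 N$ for the complementary regime produces sub-exponential tails which are absorbed into the stated bound by enlarging $K$. The principal obstacle is precisely that the Lipschitz estimate in \eqref{appendix:prop1:eq1} is unbounded and coordinate-dependent, so a direct invocation of McDiarmid's inequality or the Hoeffding-Azuma bound is unavailable; the sub-Gaussian hypothesis, through the finiteness of $\e\exp(c(Y^j)^2)$ for small $c$ encoded in Lemma \ref{appendix:lem1}, is precisely what is needed to override this difficulty.
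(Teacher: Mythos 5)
Your overall architecture (Doob martingale differences along the filtration $\mathcal{F}_i=\sigma((Y_j^1,Y_j^2):j\leq i)$, a conditional MGF bound for the increments, chaining by the tower property, Chernoff optimization) is the same as the paper's, and the symmetrization step is correct as far as it goes: introducing the independent copy, Jensen, the symmetry of $F_i(Y_i)-F_i(Y_i')$, and $\cosh x\leq e^{x^2/2}$ do give $\e[\exp(tD_i)\mid\mathcal{F}_{i-1}]\leq\bigl(\e\exp(2t^2N^{-2}((Y^1)^2+(Y^2)^2))\bigr)^2$, and the exponential square moment is finite for $2t^2N^{-2}$ small. The genuine gap is in the endgame. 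Because $\e\exp(c(Y^j)^2)$ is finite only for $c$ below a threshold $c_0$ depending on $Y^1,Y^2$, your MGF bound is confined to $|t|\leq c_1N$, and for $\varepsilon$ above the constant $\varepsilon_0:=8K_1c_1$ the boundary choice $t=c_1N$ yields only $\exp(-c_1N\varepsilon/2)$. This sub-exponential tail is \emph{not} absorbed into $K\exp(-N\varepsilon^2/K)$ by enlarging $K$: taking logarithms, you would need $\varepsilon^2/K\leq c_1\varepsilon/2+N^{-1}\log K$ for all $\varepsilon>\varepsilon_0$ and all $N\geq 1$, which forces $K\gtrsim\varepsilon$ and so fails for any fixed $K$ once $\varepsilon$ is large. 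Since the increments are unbounded, you also cannot dismiss the large-$\varepsilon$ regime as trivially having probability zero. So as written your argument proves \eqref{appendix:lem1:eq1} only for $\varepsilon$ below a constant, not "for any $\varepsilon>0$" as the proposition claims (though the restricted version would in fact suffice for the applications in Section 4, where $\varepsilon$ is a fixed small constant).

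The paper avoids this by never passing through exponential \emph{square} moments. It bounds the increment pointwise and one-sidedly, $v_i\leq\frac{1}{N}(\e|Y^1|+\e|Y^2|)+\frac{1}{N}(|Y_i^1|+|Y_i^2|)$, writes $|Y_i^j|=\e|Y^j|+Y_c^j$ with $Y_c^j:=|Y_i^j|-\e|Y^j|$ centered sub-Gaussian, and invokes Lemma \ref{appendix:lem1} to get $\e\exp(tY_c^j)\leq\exp(dt^2)$ for \emph{all} $t\in\mathbb{R}$. This yields $\e\exp(\lambda v_i)\leq K_N\exp(4d\lambda^2/N^2)$ with no restriction on $\lambda$ (the harmless multiplicative constants accumulate to $K_N^N=\exp 2(\e|Y^1|+\e|Y^2|)$), so the Chernoff optimum $\lambda=N\varepsilon/(8d)$ is always admissible and the sub-Gaussian tail holds for every $\varepsilon$. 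To repair your proof you would need to replace the symmetrization-plus-$e^{x^2/2}$ step by such a direct linear MGF bound on the increment — which is essentially the paper's argument.
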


\begin{proof} 
The main idea of the proof is to use the martingale difference sequence.
Define $Y_c^1=|Y^1|-\e|Y^1|$ and $Y_{c}^2=|Y^2|-\e |Y^2|.$ Easy to see that these two r.v. are centered sub-Gaussian. From Lemma \ref{appendix:lem1}, there exists some $d>0$ such that $\e\exp tY_c^1,\e\exp tY_c^2\leq \exp dt^2$ for all $t\in\mathbb{R}.$
Let $\mathcal{F}_0$ be the trivial $\sigma$-field and $\mathcal{F}_i$ be the $\sigma$-field generated by $(Y_1^1,Y_1^2),\ldots,(Y_i^1,Y_i^2).$ Set $v_i=\e[F(\mathbf{Y})|\mathcal{F}_i]-\e[F(\mathbf{Y})|\mathcal{F}_{i-1}]$. From \eqref{appendix:prop1:eq1},
\begin{align*}
v_i&\leq \frac{1}{N}(\e|Y^{1}|+\e|Y^2|)+\frac{1}{N}(|Y_{i}^1|+|Y_{i}^2|)\\
&=\frac{2}{N}(\e|Y^{1}|+\e|Y^2|)+\frac{1}{N}\bigl(|Y_{i}^1|-\e|Y^1|+|Y_{i}^2|-\e|Y^2|\bigr).
\end{align*}
Using this and the Cauchy-Schwartz inequality give that for all $\lambda>0,$
\begin{align*}
\e \exp \lambda v_i&\leq K_N\e\exp \frac{\lambda}{N}(Y_c^1+Y_c^2)\leq K_N\exp \frac{4d\lambda^2}{N^2},
\end{align*}
where $K_N:=\exp 2N^{-1}(\e|Y^{1}|+\e|Y^2|).$ Using the exponential Chebyshev inequality and the independence of $(Y_{i}^1,Y_i^2)$'s imply
\begin{align*}
\p(F(\mathbf{Y})-\e F(\mathbf{Y})\geq \varepsilon)
&=\p\biggl(\sum_{i\leq N}v_i\geq \varepsilon\biggr)\\
&\leq \inf_{\lambda>0}\biggl(\exp(-\lambda \varepsilon)\prod_{i\leq N}\e\exp \lambda v_i\biggr)\\
&\leq K_N^N\inf_{\lambda>0} \exp\biggl(-\lambda \varepsilon+\frac{4d\lambda^2}{N}\biggr).
\end{align*}
Optimizing the last term and noting $K_N^N=\exp 2(\e|Y^{1}|+\e|Y^2|)$ yield
\begin{align}\label{appendix:prop:proof:eq1}
\p(F(\mathbf{Y})-\e F(\mathbf{Y})\geq \varepsilon)&\leq \exp 2(\e|Y^{1}|+\e|Y^2|)\exp\biggl(-\frac{\varepsilon^2N}{16d}\biggr).
\end{align}
Since the inequality \eqref{appendix:prop1:eq1} also holds for $-F$, applying \eqref{appendix:prop:proof:eq1} to $-F$, one can also estimate $\p(F(\mathbf{Y})-\e F(\mathbf{Y})\leq -\varepsilon)$ with the same upper bound as \eqref{appendix:prop:proof:eq1} and this completes our proof.
\end{proof}

\end{document}